\newcommand{\ud}{\mathrm{d}}
\newcommand{\beq}{\begin{equation}}
\newcommand{\eeq}{\end{equation}}
\newcommand{\ba}{\begin{eqnarray}}
\newcommand{\ea}{\end{eqnarray}}
\newcommand{\f}{\varphi}
\newcommand{\ms}{\mathscr}
\newtheorem{theorem}{Theorem}[section]
\newtheorem{remark}{Remark}[section]
\newtheorem{proposition}{Proposition}[section]
\newtheorem{lemma}{Lemma}[section]
\title{On the Doi-Edwards and   K-BKZ rheological models for polymer fluids: an existence result  for shear flows.
%\footnote{Dedicated to Professor Denis Serre, Ecole Normale Sup\'erieure de Lyon, France, on the occasion of his 60th birthday anniversary.} 
}
\author{Ionel Sorin Ciuperca$^1$, Arnaud Heibig$^2$ and Liviu Iulian Palade$^2$ \thanks{Corresponding author.  E-mail: ciuperca@math.univ-lyon.fr, arnaud.heibig@insa-lyon.fr, liviu-iulian.palade@insa-lyon.fr; Fax: +33 472438529}   
}
\numberwithin{equation}{section}
\begin{document}

\maketitle

\begin{flushleft}

Universit\'e de Lyon, CNRS, Institut Camille Jordan UMR 5208\\

$^1$ Universit\'e Lyon 1,  B\^at Braconnier, 43 Boulevard du 11 Novembre 1918, F-69622, Villeurbanne, France. 

$^2$ INSA-Lyon,  P\^ole de Math\'ematiques, B\^at. Leonard de Vinci No. 401, 21 Avenue Jean Capelle, F-69621, Villeurbanne, France.

\end{flushleft}

\begin{abstract}
This paper establishes the existence of smooth solutions for the Doi-Edwards rheological model of viscoelastic polymer fluids in shear flows.  The problem  turns out to be formally equivalent to a K-BKZ equation but with constitutive  functions spanning beyond the usual mathematical framework.  We prove, for small enough initial data, that the solution remains in the domain of hyperbolicity of the equation  for all $t\geq0$.  

\end{abstract}

\begin{flushleft}

Keywords: Doi-Edwards polymer model; K-BKZ viscoelastic fluid; shear flows; convolution operator; evolutionary integro-differential equation. \\

\end{flushleft}

%\begin{center}
%{\bf submitted paper}
%\end{center}

%%%%%%%%%%%%%%%%%%%%%%%%%%%%%%%%%%%%%%%%%%%%%%%%%%%%%%%%%%%%%%%%
%%%%%%%%%%%%%%%%%%%%%%%%%%%%%%%%%%%%%%%%%%%%%%%%%%%%%%%%%%%%%%%
\section{Introduction.}\label{intro}
%%%%%%%%%%%%%%%%%%%%%%%%%%%%%%%%%%%%%%%%%%%%%%%%%%%%%%%%%%%%%%%
%%%%%%%%%%%%%%%%%%%%%%%%%%%%%%%%%%%%%%%%%%%%%%%%%%%%%%%%%%%%%%

Today's modeling  of non-Newtonian and viscoelastic industrial flows (and of the rheological behavior in general) relies heavily on molecular theories.   The rheology of various linear/branched polymer liquids is very well described by the so-called tube-reptation theories initiated by Doi and Edwards (DE), see \cite{de1}.  At the heartcore of any kinetical model one finds  a configurational probability diffusion equation (a parabolic PDE) the solution of which is needed to obtain the stress tensor, i.e. the corresponding constitutive equation (CE).  For the full, non-linear DE model, in \cite{chp1} we proved the existence and uniqueness of solutions for the diffusion equation using the Schauder fixed point theorem and the Galerkin's  approximation method.  Moreover, this work is related to that in \cite{bcip}. 

Here we focus on an equally crucial issue, that of existence of solutions to shear flows.  The corresponding constitutive equation is that of the simplified DE theory commonly called Independent Alignment Approximation (IAA).  The governing equations for the shear flow are given below:

\beq\label{0i1}
\dfrac{\partial v}{\partial t}=\dfrac{\partial \theta}{\partial x}
\eeq 

\beq\label{0i2}
\theta=\int_0^1 \int_{S_2} u_1 u_2 F \ud u \ud s
\eeq

\beq\label{0i3}
\dfrac{\partial F}{\partial t}=\dfrac{\partial^2 F}{\partial s^2}- \dfrac{\partial v}{\partial x}\dfrac{\partial}{\partial u}\cdot \left(\mathcal{G}_0(u)F \right) 
\eeq

In the above, the notations are common to the mathematical and the related continuum mechanics, rheology,  and polymer physics literature:  $v=v(x,t)$ is the scalar velocity field, $\theta=\theta(x,t)$ is the stress, and $F(t,u,s,x)$ the configurational probability function.  The flow occurs in the $x$ direction during time $t$, $s\in(0,1)$ is the polymer chain's primitive path curvilinear coordinate, and $u=(u_1,u_2,u_3)$ the unitary vector pointing outwardly the unit sphere $S_2$.  Similar to notations in \cite{de0}, $\mathcal{G}_0(u)= M\cdot u-(M:uu)u$, with

\begin{eqnarray}\label{0im}
M=
\left( \begin{array}{ccc}
0 & 1 & 0 \\
0 & 0 & 0 \\
0 & 0 & 0 
\end{array} \right)
\end{eqnarray}
     
To the system of equations \eqref{0i1}-\eqref{0i3} we assign the following boundary and initial conditions:

\begin{equation}\label{0ibc}
\begin{cases}
v=0, \,\text{for}\, x\in\partial\Omega \\
v=v_0, \,\text{for}\, t=0 \\
F=\dfrac{1}{4\pi}, \,\text{for}\, s=0 \,\text{or}\,s=1\\
F=F_0, \,\text{for}\, t=0  
\end{cases}
\end{equation}

where $\Omega\subset\mathbb{R}$ is the range for $x$, while $v_0(x)$ and $F_0(u,s,x)$ are initial data.  

From \cite{de0} one sees the equation \eqref{0i3} for $F$ can be solved allowing the obtainment of $\theta$ as a function of the velocity gradient $\dfrac{\partial v}{\partial x}$.  In particular, for $F_0=1/(4\pi)$ one gets:

\begin{align}\label{0i4}
& \theta = - g_{DE} \left(\int_0^t \dfrac{\partial v}{\partial x}(x,\tau)\ud\tau  \right)a_{DE}(t)+\int_0^t g_{DE} \left(\int_\tau^t \dfrac{\partial v}{\partial x}(x,r)\ud r \right)a'_{DE}(t-\tau)\ud\tau
\end{align}

with $g_{DE}:\mathbb{R}\to \mathbb{R} $ 

\begin{align}\label{0i5}
& g_{DE}(y)=-\int_{S_2} \dfrac{u_1 u_2}{\left[(u_1-u_2 y)^2+u_2^2+u_3^2\right]^{3/2}} \ud u,\, \forall y\in \mathbb{R}
\end{align}

and $a_{DE}:\mathbb{R}_+\to \mathbb{R} $

\begin{align}\label{0i6}
& a_{DE}(t)=\sum_{p=1}^{+\infty} \dfrac{1}{(2p+1)^2}e^{-(2p+1)^2t}
\end{align}

the relaxation function of the DE model.

From the above considerations one infers the shear flow problem under scrutiny is tantamount to solving for $v$ the below integro-differential equation:

\begin{align}\label{0i7}
& \dfrac{\partial v}{\partial t}= - \dfrac{\partial }{\partial x} g_{DE} \left(\int_0^t \dfrac{\partial v}{\partial x}(x,\tau)\ud\tau  \right)a_{DE}(t)+\dfrac{\partial }{\partial x} \int_0^t g_{DE} \left(\int_\tau^t \dfrac{\partial v}{\partial x}(x,r)\ud r \right)a'_{DE}(t-\tau)\ud\tau,\, t>0 
\end{align}

Now equation \eqref{0i7} - here obtained on molecular dynamics grounds - has been focused on  within the area of viscoelastic fluids as it comes out when one studies shear flows for the K-BKZ fluids.  There is no contingency here as in their 1978 original paper \cite{de0}, Doi and Edwards have shown the simplified IAA version of their nonlinear model actually enters the class of K-BKZ integral models, which are based on continuum mechanics concepts (for more on see  \cite{bird2}, \cite{ott1}, \cite{ren}).  Consequently, when undertaking the study of certain particular flows of DE fluids one may capitalize on previously obtained results for K-BKZ liquids.  

In this paper we study equation \eqref{0i7} with more general functions $g$ and $a$ replacing $g_{DE}$ and $a_{DE}$, respectively.  We prove a global in time solution existence result for small enough data.  Uniqueness is the focus of an upcoming paper \cite{hbg}.   Equation \eqref{0i7} - as well as variants of it - was studied by various authors, see Renardy, Hrusa and Nohel \cite{rhn}, Engler \cite{enl1}, Brandon and Hrusa \cite{bh1} and references cited therein.

The existence of local in time solutions \cite{rhn} and of global solutions \cite{enl1}, \cite{bh1} are known under more restrictive conditions compared to those stated in this paper.  One of the assumptions in \cite{enl1} and \cite{bh1} is $g'(y)<-\gamma$, for any $y\in\mathbb{R}$, with $\gamma>0$, which is not verified by the function $g=g_{DE}$.  Here we make use of the less restrictive assumption $g'(y)<0$, for any $y\in[-\theta,\theta]$, with $\theta>0$, and show that the argument of $g'$ is confined to $[-\theta,\theta]$. The requirement $g'<0$ is a necessary hyperbolicity condition for the solution local existence.   For the work presented in this paper, this condition being valid only locally makes it necessary to control, w.r.t. time $t$, the argument $\displaystyle \int_0^t \dfrac{\partial v}{\partial x}(x,\tau) \ud \tau $ of $g'$.  Observe that at a first sight, this argument may become large with increasing $t$.    

Next, among the restrictive hypotheses invoked by the authors of \cite{bh1} for function $a$ is that $a''\in L^1(0,+\infty)$, which  $a=a_{DE}$ does not verify.  Comparatively, here we shall place significantly less restrictions  on $a$ and accordingly will construct a class of totally monotone functions, an element of which is $a=a_{DE}$.

%In particular ?? (in carrying the task), the proof requires an inversion formula for the operator $u\mapsto a*u$.  Such a formula is given in \cite{bh1} but it does not apply to the relaxation function  $a=a_{DE}$ as it requires  at least a $\mathscr{C}^1(0,+\infty)$ regularity for function $a$.  Here we give an inversion formula which works for a larger class of functions $a$.  

The manuscript is organized as following:

In Section \ref{ibvps} we introduce the problem and enunciate the main result.  

Section \ref{prl} is devoted to the proof of several necessary results such as a G\aa rding type inequality and an inversion formula  for the operator $u\mapsto a*u$ which differs from the one given in \cite{bh1}.  

In Section \ref{appe} we introduce an approximated problem and obtain useful estimates for its solution.  In particular we obtain an estimate for the argument of $g'$ with the help of a maximal function.  The proof of the main result is achieved in Section \ref{pmrs}.

In the ending Section \ref{ax} we construct a class of totally monotone functions that is compatible with the hypothesis made about $a$.

%%%%%%%%%%%%%%%%%%%%%%%%%%%%%%%%%%%%%%%%%%%%%%%%%%
%%%%%%%%%%%%%%%%%%%%%%%%%%%%%%%%%%%%%%%%%%%%%%%%%%
\section{Presentation of the problem and of the main results.}\label{ibvps}
%%%%%%%%%%%%%%%%%%%%%%%%%%%%%%%%%%%%%%%%%%%%%%%%%%
%%%%%%%%%%%%%%%%%%%%%%%%%%%%%%%%%%%%%%%%%%%%%%%%%%

Let from now on $\Omega\subset\mathbb{R}$ be a bounded, open interval.  Let the functions $f:\Omega\times [0,+\infty)\to\mathbb{R}$, $g:I\subset\mathbb{R}\to\mathbb{R}$, with $I\ni0$ an open interval,  $v_0:\Omega\to\mathbb{R}$, $a:[0,+\infty)\to\mathbb{R}$.   

The aim is to search for  a solution  $v:\Omega\times[0,+\infty)\to\mathbb{R}$ to the below given initial boundary value problem:

\begin{align}
& v_t(x,t)=-a(t)\dfrac{\partial}{\partial x} g\left( \int_0^t v_x(x,s)\ud s \right) + \dfrac{\partial}{\partial x}\int_0^t g\left( \int_s^t v_x(x,\tau)\ud \tau \right)a'(t-s)\ud s+f(x,t) \label{p1} \\
& v(x,t=0)=v_0(x),\, \forall x\in\Omega,\, \text{and}\,v(x,t)=0,\,\forall t<0 \label{p2}\\
& v=0, \, \forall x\in \partial\Omega, \forall t\geq0 \label{p3}
\end{align}

In the above, $v_x\equiv \dfrac{\partial v}{\partial x}$ and $a'$ stands for the derivative of $a$.  Throughout this paper, any function defined for $t\geq0$ is  understood as being set equal to $0$ for $t<0$, i.e. it has domain $\mathbb{R}$.   Moreover, for a function $\f\in W^{k,1}(0,+\infty)$ we denote by $\f^{(k)}$ the distributional derivative of $\f$ on $\mathbb{R}^*_+$, derivative which is understood to be extended to $\mathbb{R}$ by 0.  Define

$$\bar{v}^t(x,s):=\int_{t-s}^t v(x,\tau)\ud\tau,\,0\leq s,t;\, x\in\Omega$$ 

Equation \eqref{p1} now takes on a simpler form:

\beq\label{p4}
v_t(x,t)=\int_0^{+\infty}a'(s) \dfrac{\partial}{\partial x}g\left(\bar{v}^t_x(x,s) \right) \ud s +f(x,t)
\eeq

Drawing inspiration from \cite{bh1}, \eqref{p4} can be re-written as

\beq\label{p5}
v_t(x,t)+g'(0)\int_0^t a(t-s) v_{xx}(x,s)\ud s=f(x,t)+\mathcal{G}(x,t)
\eeq

where

\begin{align}\label{p6}
\mathcal{G}(x,t) & =\int_0^{+\infty}a'(s)\left[g'\left(\bar{v}^t_{x}(x,s) \right) -g'(0) \right]\bar{v}^t_{xx}(x,s) \ud s \nonumber\\
& = \int_0^t v_{xx}(x,s)\int_{t-s}^{+\infty}a'(\tau)\left[ g' \left(\bar{v}^t_{x}(x,\tau) \right) -g'(0) \right] \ud\tau \ud s
\end{align}

Convolution with respect to $t$ is denoted as usually by $*$; therefore \eqref{p5} can be re-written in a more close form as 

$$v_t+g'(0)a*v_{xx}=f+\mathcal{G} $$

We now proceed to presenting several constitutive assumptions.  The function $g$ is taken such that:

\begin{enumerate}[$(g_1).$]
\item there exist $\theta\in[0,1]$ and $K>0$, such that $g\in \ms{C}^3\left([-\theta,\theta],\mathbb{R} \right) $ and $\left|g^{(3)}(y)-g^{(3)}(0) \right|\leq K|y|,\, \forall y\in[-\theta,\theta]$ 
\item $g(0)=g''(0)=0$
\item $g'(0)<0$
%\item there exist $K >0$ and $\theta\in[0,1]$ such that $\left|g^{(3)}(y)-g^{(3)}(0) \right|\leq K|y|,\, \forall y\in[-\theta,\theta]$
\end{enumerate}

The function $f$ is such that 

\begin{enumerate}[$(f_1).$]
\item $f,f_x,f_t\in \ms{C}^0_b\left([0,+\infty);L^2(\Omega)\right)\cap L^2\left([0,+\infty);L^2(\Omega) \right) $, 
\item $f_{tt}\in L^2\left([0,+\infty);L^2(\Omega) \right) $, $\displaystyle\int_0^t f(x,s)\ud s \in \ms{C}^0_b\left([0,+\infty);H^1(\Omega)\right)$, 
\end{enumerate}

where $\ms{C}^0_b\left([0,+\infty);X\right)$ is the set of all functions $w: [0,+\infty)\to X$ which are bounded and continous, and $X$ is a Banach space.

Next, let $v_0$ be such that

\begin{enumerate}[$(v_0)_1.$]
\item $v_0\in H^2(\Omega)$.
\end{enumerate}

We assume that $f$ and $v_0$ are compatible with the already stated initial-boundary conditions: 

\beq\label{nbce1}
v_0(x)=f(x,t=0)=0, \,\forall x\in \partial\Omega
\eeq  

Let the measures associated to $f$ and $v_0$ be defined as:

\begin{align}\label{msf}
F(f):= & \displaystyle\sup_{t\geq0} \int_\Omega \left[ f^2+f_x^2+f_t^2 
+ \left(\int_0^t f(x,s)\ud s \right)^2 + \left(\int_0^t f_x(x,s)\ud s \right)^2
\right] \ud x
%\right)(x,t)\ud x+\displaystyle\sup_{t\geq0} \int_\Omega\left(\int_0^t f(x,s)\ud s %\right) \ud x \nonumber\\
\\
& +\int_0^{+\infty}\int_\Omega \left(f^2+f_x^2+f^2_t+f^2_{tt} \right)(x,t)\ud x \ud t 
\end{align}

\beq\label{msv}
V_0(v_0)=\|v_0\|^2_{H^2(\Omega)}=\int_\Omega \left[v_0^2+(v'_0)^2+(v''_0)^2 \right](x)\ud x 
\eeq

For any function $\f\in L^1\left((0,+\infty) \right)$ we denote by $\mathcal{F} \f$ (or alternatively by $\hat{\f}$) and  $\mathcal{L} \f$ the corresponding Fourier and Laplace transforms, i.e.:

$$ \mathcal{F}\f(\omega) := \int_0^{+\infty}\f(t) e^{-i\omega t}\ud t,\, \forall \omega\in \mathbb{R}$$ 

$$ \mathcal{L}\f(z) := \int_0^{+\infty}\f(t) e^{-z t}\ud t, \, \forall z\in \mathbb{C}, \text{Re}{z}\geq0$$

Let us now assume the function $a$ is such that 

\begin{enumerate}[$(\text{a}_1).$]
\item $a\in W^{1,1}\left( 0,+\infty \right) $, $a'(t) \leq 0$ \ a.e. \ $t \geq 0$, \\*[2ex]
There exists a sequence of functions $\left( a_n\right) _{n\in\mathbb{N}}$, $a_n\in \mathscr{C}^2
\left( [0,+\infty)\cap W^{2,\infty}([0,+\infty) \right)$ s.t.
\item  $a_n'(t) \leq 0 \; \; 
\forall \, t \geq 0$,
such that $\left( a_n\right)_{n\in\mathbb{N}} $ bounded in $W^{1,1}\left( 0,+\infty \right)$ and 
$\displaystyle a_n\xrightarrow[n\to+\infty]{\mathscr{D}'\left( 0,+\infty\right)}a  $,
%and $a_n'(t) \leq 0$ \ a.e. \ $t \geq 0$,
%in  $W^{1,1}\left( (0,+\infty)\right)$, 
\item $\displaystyle\mathop{\sup}_{n\in\mathbb{N}} \left[ \int_0^1 t\left|a''_n(t)\right|\ud t+\int_1^{+\infty}\sqrt{t}\left|a''_n(t)\right|\ud t+\int_1^{+\infty} t^2\left|a'_n(t)\right|\ud t  \right]<+\infty $,
\item there exist constants $M_1>0$ and  $n_0 \in \mathbb{N}$ s.t. $\text{Re}\left(\mathcal{F}a_n(\omega) \right)\geq \dfrac{M_1}{1+\omega^2}$, $\forall n\in\mathbb{N}$, $n\geq n_0$, $\forall \omega\in\mathbb{R}$; observe that this is a strong positivity condition, common for this type of problems (see \cite{bh1}).
\item  there exist constants $M_2>0$ and $p\in\mathbb{N}^*$ s.t. $\displaystyle 
\dfrac{\left[ \mathcal{F}\left( a'_n\right)\right] ^p  }{\mathcal{F} a_n}\in \mathcal{F}
\left(B_{L^1(\mathbb{R})}(0,M_2) \right) $, $\forall n\in \mathbb{N}$, where 
$B_{L^1(\mathbb{R})}(0,M_2)$ denotes the ball in $L^1(\mathbb{R})$ centered at $0$ and of radius 
$M_2$;  this assumption will be used to obtain a representation for the solution 
$u$ of $a_n * u=b$ (see Theorem \ref{il}).  
\end{enumerate}

\begin{remark}
In Section \ref{ax} we shall construct a class of functions compliant with assumptions $(a_1)$ to $(a_5)$.  This class contains the Doi-Edwards relaxation kernel $ a_{DE}:[0,+\infty)\to\mathbb{R}$,

\beq\label{tila}
a_{DE}(t)=\displaystyle \mathop{\sum}_{k\geq1}\dfrac{1}{(2k+1)^2}e^{-(2k+1)^2 t}
\eeq

Also, since $g_{DE}\in \mathscr{C}^\infty\left(\mathbb{R} \right) $ is an odd function and $\displaystyle g'_{DE}(0)=-3\displaystyle \int_{S_2} u_1^2 u_2^2 \ud u <0$, then  $\displaystyle g_{DE}$ also verifies $(g_1)$-$(g_4)$ and this paper results equally apply to the function $g_{DE}$.  

\end{remark}

The main result of this paper is stated below:

\begin{theorem}[{\bf Main Result}]\label{mr}
Assume that the hypotheses on the data given in  $(g_1)$-$(g_4)$, $(f_1)$-$(f_2)$, $(v_0)_1$, $(a_1)$-$(a_5)$ and \eqref{nbce1} hold true. Then there exists a $\delta>0$ such that, if the additional smallness assumption $F(f)+V_0(v_0)\leq \delta$ is verified, then there exists at least a solution 

$$v\in \displaystyle \left\lbrace \bigcap_{m=0}^2 W^{m,\infty} \left((0,+\infty);H^{2-m}\left(\Omega\right) \right)\right\rbrace  \cap \left\lbrace \bigcap_{m=0}^2 W^{m,2} \left((0,+\infty);H^{2-m}\left(\Omega\right) \right)  \right\rbrace $$ 

with 

$$\displaystyle \int_0^t v(x,s)\ud s \in L^\infty \left((0,+\infty);H^{3}\left(\Omega\right) \right) $$ 

to the problem \eqref{p4}, \eqref{p2}-\eqref{p3}.

\end{theorem}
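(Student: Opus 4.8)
The plan is to construct the solution by solving the approximated problem obtained by replacing $a$ with $a_n$ (from $(a_1)$--$(a_2)$), deriving a priori estimates uniform in $n$ that both provide compactness and, crucially, keep the argument $\bar v^t_x(x,s)$ of $g'$ inside $[-\theta,\theta]$, and then passing to the limit $n\to+\infty$. First I would fix $n$ and solve \eqref{p5} (with $a_n$ in place of $a$) by a fixed-point argument: the linear part $v_t+g'(0)\,a_n*v_{xx}=h$ is, since $g'(0)<0$ and using the strong positivity $(a_4)$ together with the G\aa rding-type inequality announced for Section \ref{prl}, a well-posed parabolic-type Volterra problem on which one has energy estimates; the nonlinear term $\mathcal G$ in \eqref{p6} is treated perturbatively, using $(g_1)$--$(g_2)$ (so that $g'(y)-g'(0)=O(y^2)$ near $0$, since $g''(0)=0$) to make it a higher-order, contractive perturbation for small data. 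This yields, for each $n$, a local-in-time solution in the energy space $\bigcap_{m=0}^2 W^{m,2}((0,T);H^{2-m}(\Omega))$.

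Next I would derive the a priori estimates that make the local solution global. Multiplying \eqref{p5} by $v$, $v_t$, $v_{xx}$, $v_{xxt}$ and their time-integrated analogues, and using $(a_4)$ to control the memory term from below (this is where $\mathrm{Re}\,\mathcal F a_n(\omega)\geq M_1/(1+\omega^2)$ enters, via a Plancherel/frequency-domain argument on the convolution, yielding a coercive lower bound modulo lower-order terms), one obtains an estimate of the form $E(t)\leq C\big(F(f)+V_0(v_0)\big)+C\,E(t)^{3/2}$ for a suitable energy $E$ controlling all the norms in the statement, including $\int_0^t v\,ds$ in $H^3$. Assumption $(a_3)$ provides the weighted bounds on $a_n''$ and $a_n'$ needed to estimate the commutator terms and the contributions of $\mathcal G$ uniformly in $n$ and in $t$. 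For small enough $\delta$ a standard continuity/bootstrap argument then closes the bound globally: $E(t)\leq 2C\delta$ for all $t\geq0$.

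The step I expect to be the main obstacle is controlling the argument $\bar v^t_x(x,s)=\int_{t-s}^t v_x(x,\tau)\,d\tau$ of $g'$ and keeping it in $[-\theta,\theta]$ for all $t\geq0$, since naively this quantity can grow with $t$; this is precisely the difficulty the authors flag in the introduction. The intended remedy, as hinted in the description of Section \ref{appe}, is to bound $\bar v^t_x$ not by $\int_0^t\|v_x(\cdot,\tau)\|\,d\tau$ (which need not be small) but via a maximal-function estimate: one writes $\bar v^t_x$ using the inversion formula for $u\mapsto a_n*u$ from Theorem \ref{il} (whose existence requires $(a_5)$, the condition on $[\mathcal F(a_n')]^p/\mathcal F a_n$), trading a time-integral of $v_x$ for a convolution against an $L^1$ kernel applied to $v_t+\text{(lower order)}$, and then estimates the result by the Hardy--Littlewood maximal function of $t\mapsto \|v_t(\cdot,t)\|_{H^1}$-type quantities, which is controlled in $L^2$ (hence, with the embedding $H^1(\Omega)\hookrightarrow L^\infty(\Omega)$ in one space dimension, pointwise in $x$) by the global energy $E$. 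Smallness of $E$ then forces $|\bar v^t_x|\leq\theta$ uniformly, justifying a posteriori the use of $(g_1)$ only on $[-\theta,\theta]$ and closing the argument. Finally, with the uniform bounds in hand, I would extract a weak-$*$ convergent subsequence, use the convergence $a_n\to a$ in $\mathscr D'$ together with the uniform $W^{1,1}$ bound to pass to the limit in the linear memory term, and use strong compactness (Aubin--Lions) to pass to the limit in the nonlinear term $g(\bar v^t_x)$, obtaining a solution of \eqref{p4}, \eqref{p2}--\eqref{p3} with the asserted regularity.
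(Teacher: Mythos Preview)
Your overall architecture is right (regularise with $a_n$, local existence, uniform a~priori estimates with a bootstrap, compactness), but the mechanism you describe for the key step---keeping $\bar v^t_x\in[-\theta,\theta]$---is not the one that works, and the roles you assign to the inversion theorem and the maximal function are essentially reversed.

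The paper does \emph{not} control $\bar v^t_x$ by writing it as a convolution and applying the inversion formula from Theorem~\ref{il}; indeed $\bar v^t_x(x,s)=\int_{t-s}^t v_x(x,\tau)\,\ud\tau$ is not of the form $a*w$, so there is nothing to invert there. The control is elementary once one includes $\|u(\cdot,t)\|_{H^3(\Omega)}^2$ in the energy $\mathcal E(t)$: since $\bar v^t_x(x,s)=u_x(x,t)-u_x(x,t-s)$ and $H^2(\Omega)\hookrightarrow L^\infty(\Omega)$ in one dimension, one gets $|\bar v^t_x|\leq 2\sup_s\|u_x(\cdot,s)\|_{L^\infty}\leq C\sqrt{\mathcal E(t)}$, which is $\leq\theta$ under the smallness bootstrap (this is exactly \eqref{pr41}--\eqref{pr62}). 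The hard part is therefore obtaining the $u_{xxx}$ estimate, and this is where the inversion theorem and the maximal function actually enter. Energy estimates (multiplying by $v$, $v_t$ and the finite difference $\triangle_h v_t$, not by $v_{xx}$ or $v_{xxt}$) only yield control of $v$, $v_x$, $v_t$, $v_{xt}$, $v_{tt}$; to recover $v_{xx}$ one inverts $a_n*\cdot$ in \eqref{p5} via Theorem~\ref{il} (this is where $(a_5)$ is used), getting \eqref{sl0}. Integrating this in time produces \eqref{al3}--\eqref{al3n}, and the terms $\int_0^t\mathcal G\,\ud s$ and $\int_0^t\mathcal G_x\,\ud s$ are the ones that are estimated with the Hardy--Littlewood maximal function (see \eqref{al10}--\eqref{al15n} and \eqref{nal9}--\eqref{nal13}): one bounds $\|\bar v^s_x(\cdot,\tau)\|_{L^\infty}\leq C\tau\,\mathcal M(\|\tilde v\|_{H^2})(s)$ and uses the $L^2$ boundedness of $\mathcal M$ together with $(a_3)$ (the factor $\tau^2|a'(\tau)|\in L^1$) to close the estimate without any growth in $t$. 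So: put $\|u\|_{H^3}$ into $E$, get $v_{xx}$ by inversion rather than by an energy multiplier, and use the maximal function on the integrated nonlinear remainder, not on $\bar v^t_x$ itself.
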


Next we take on to introducing (and explaining) the proof stages for the aforementioned  Theorem \ref{mr}.   In short, first we obtain a regularized problem $(P_n)$ obtained from \eqref{p5} with $a$ being replaced by a sequence $a_n$ satisfying hypotheses $(a_1)$ to $(a_4)$.  Doing this allows to obtain a local in time existence and uniqueness result capitalizing on Renardy's result in \cite{ren}.   Next goal is to obtain estimates independent of $n$  granting the global existence of the solution for the approximated problem $(P_n)$ and in the end, letting $n\to+\infty$, obtaining our result.  How to get these estimates is explained below.  

Let $u(x,t)=\displaystyle\int_0^t v(x,\tau)\ud\tau$.  For any $t>0$, let $\mathcal{E}(t)$ stand for the sum of squared $L^\infty_t L^2_x$ norms of all derivatives in $x$ and $t$ of $u$ up to third order and of all squared $L^2_t L^2_x$ norms of all derivatives in $x$ and $t$ of $v$ up to second order (see \eqref{pr2}).  We prove that if $\mathcal{E}(t)$ is ``small'' for $t$ close to $0$ (a consequence of the assumption made on data $v_0$ and $f$), then $\mathcal{E}(t)$ stays ``small'' for any $t$.  We do this by obtaining an inequality of the type 

\beq\label{expl1}
\mathcal{E}(t)\leq \dfrac{1}{2}\mathcal{E}(t)+\text{``small enough'' quantities depending uniquely on}\, V_0\,\text{and}\, F
\eeq

Getting the second term in the rhs of  \eqref{expl1} requires previously calculated upper bounds of $v$ and its up to second order derivatives in $x$ and $t$, and of $u$ and its up to third order derivatives in $x$ and $t$.  Equation \eqref{p5} is equivalently written as:

\beq\label{expl2}
v_t+g'(0) a*v_{xx}=f+\mathcal{G}
\eeq

Next, we calculate three energy estimates (in a way similar in nature with that of Brandon and Hrusa \cite{bh1}: we derivate \eqref{expl2} $i$-times (with  $i\in\left\lbrace 0,1,2\right\rbrace $) w.r.t. time $t$ , then multiply the result by $\dfrac{d^i v}{dt^i}$ and integrate on $Q_t:=\Omega\times (0,t)$.  To calculate the second order derivative one uses a finite difference operator $\triangle_h w(t)=w(t+h)-w(t)$, see \eqref{fd1}.  We sum up the resulting three equations and get an equality in which the most important term originates from the convolution part in the \textit{lhs} of \eqref{expl2}.  This term reads

\beq\label{expl3}
g'(0) \left[Q\left( v_x,t,a\right) +Q\left( v_{xt},t,a\right) +Q\left(v_{xtt},t,a \right)  \right] 
\eeq

where $Q(w,t,a)=\displaystyle\int_0^t\int_\Omega w(x,s) \left(a*w \right)(x,s)\ud x \ud s$ (see \eqref{qdf}).  We lower bound \eqref{expl3} using the Plancherel-Parseval equality and assumption $(a_4)$ and get (with $w=0$ outside $(0,t)$)

\beq\label{expl4}
Q(w,t,a)\geq \int_\mathbb{R} \int_\Omega \dfrac{M_1}{1+\omega^2}\left|\left( \mathcal{F}w\right) (x,\omega) \right|^2\ud x \ud \omega
\eeq

Notice the presence of $\dfrac{M_1}{1+\omega^2}$ does not render the \textit{rhs} of \eqref{expl4} sufficiently coercive, however we use it to obtain the necessary coercivity for $Q(w,t,a)+Q\left(w_t,t,a \right) $ instead of $Q(w,t,a)$.  The procedure is given in sufficient detail in Lemma \ref{ah}, which deals with a G\aa rding type inequality with a boundary term.  

The terms denoted by $\mathcal{G}$ in \ref{expl2} can be controlled w.r.t. well chosen norms by carrying out an integration by parts w.r.t. time $t$ and switching the time derivatives onto $a$ and using the fact that $t a''\in L^1(0,1)$ (see assumption $(a_3)$).  Eventually one upper bounds w.r.t. $L^\infty_t L^2_x$ norms $v$, $v_x$, $v_t$, $v_{xt}$, $v_{tt}$, and w.r.t. $L^2_t L^2_x$ norms $v$, $v_x$, $v_t$, $v_{xt}$.  The results are gathered into $\mathcal{E}_1$, see \eqref{imeq1}.  We point out that the aforementioned energy estimates do not provide norm estimates for $v_{xx}$.  To cope with this difficulty we use \eqref{expl2} which allows to express $v_{xx}$ as a function of $v_t$, $f$ and $\mathcal{G}$ with the help of an inversion Theorem for the operator  $w\mapsto a*w$ and using the previously obtained estimates.  We cannot use the resolvent kernel technique like in Brandon and Hrusa \cite{bd1} because in this paper case $r'\notin L^1(\mathbb{R})$ (as $a''\notin L^1(\mathbb{R}_+)$).  Because of 
that we prove a point-wise inversion Theorem for the convolution of $a$ assuming pretty weak constraints on $a$: see Theorem \ref{il}.

%%%%%%%%%%%%%%%%%%%%%%%%%%%%%%%%%%%%%%%%%%%%%%%%%%
%%%%%%%%%%%%%%%%%%%%%%%%%%%%%%%%%%%%%%%%%%%%%%%%%%
\section{Preliminaries.}\label{prl}
%%%%%%%%%%%%%%%%%%%%%%%%%%%%%%%%%%%%%%%%%%%%%%%%%%
%%%%%%%%%%%%%%%%%%%%%%%%%%%%%%%%%%%%%%%%%%%%%%%%%%

We shall frequently employ the following inequalities:

\beq\label{iq1}
|xy|\leq \mu x^2+\dfrac{1}{4\mu}y^2,\, x,y\in\mathbb{R}, \mu>0
\eeq

\beq\label{iq2}
\|F_1*F_2\|_{L^p(0,T)}\leq \|F_1\|_{L^1(0,+\infty)}\|F_2\|_{L^p(0,T)},
\eeq

The above is true for any $T>0$, $F_1\in L^1(0,+\infty)$, and $F_2\in L^p(0,T)$, with $p\geq1$.  Functions $F_1$ and $F_2$ are extended to $\mathbb{R}$ by $0$.

For any $T>0$, $w\in \ms{C}^0\left([0,T];L^2(\Omega) \right)$, $b\in L^1(0,+\infty)$ and $t\in[0,T]$.  We define

\begin{align}\label{qdf}
Q(w,t,b) & :=\int_0^t \int_\Omega w(x,s)\int_0^s b(s-\tau)w(x,\tau)\ud\tau \ud x \ud s \nonumber\\
& = \int_0^t \int_\Omega w(x,s)(b*w)(x,s)\ud x \ud s
\end{align}

where $w$ is considered as extended by $0$ on $(T,+\infty)$.  For any $T>0$ and $h\in(0,T)$, we define the finite difference operator $\Delta_h$

\beq\label{fd1}
\left( \Delta_h w \right) (x,t)=w(x,t+h)-w(x,t)
\eeq

as a linear operator from $\ms{C}^0\left([0,T-h];L^2(\Omega) \right)$ onto $\ms{C}^0\left([0,T];L^2(\Omega) \right)$.  

%Next, with the help of Heaviside's step function 

%\begin{displaymath}
%H(t)=\left\{ \begin{array}{ll}
%              1,\,t\geq0\\
%0,\, t<0
%             \end{array}\right. 
%\end{displaymath}

%we introduce the function $t \in \mathbb{R} \mapsto E(t):=e^{-t}H(t)\in \mathbb{R}$.  

Moreover, if $X(J)$ denotes a space of functions defined on $J\subset \mathbb{R}$ and $I\subset J$, then $X_I(J)$ stands for the subspace of functions $X(J)$ the supports of which are included in $I$ (i.e. that vanish on $J-I$).

Recall that $b\in L^1\left(\mathbb{R}_+ \right) $ is of positive type if, for any $t\geq0$ and any $\f\in L^2\left(\mathbb{R}_+ \right) $, it satisfies $\displaystyle \int_0^t \f(s)\int_0^s b(s-\tau)\f(\tau)\ud\tau \ud s\geq0$.  Next, $b$ is said to be of strong positive type if there exists $\epsilon>0$ s.t. the function $b(t) - \epsilon e^{-t}$ is of positive type.  Moreover, $Q_t:=\Omega\times (0,t)$.

For future reference we prove the following Lemmas: 

\begin{lemma}\label{adl1}
Let the mappings $\f$ and $s\mapsto s\f(s)$ be elements of  $L^1\left(\mathbb{R}_+ \right)$.  Then the function $s\mapsto \displaystyle \int_s^{+\infty} \f(\tau)\ud\tau$ belongs to $L^1\left(\mathbb{R}_+ \right)$ and we have the estimate 

$$\displaystyle \int_0^{+\infty}\left|\int_s^{+\infty}\f(\tau)\ud\tau \right| \ud s\leq \int_0^{+\infty}\left| s\f(s) \right| \ud s$$

\end{lemma}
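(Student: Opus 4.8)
\textbf{Proof proposal for Lemma \ref{adl1}.}

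The plan is to apply Fubini--Tonelli to the nonnegative function $(s,\tau)\mapsto |\f(\tau)|\,\mathbf{1}_{\{0\le s\le\tau\}}$ on $\mathbb{R}_+\times\mathbb{R}_+$. First I would bound the integrand pointwise: for a.e. $s\ge 0$,
\[
\left|\int_s^{+\infty}\f(\tau)\,\ud\tau\right|\le \int_s^{+\infty}|\f(\tau)|\,\ud\tau,
\]
which is finite for a.e. $s$ since $\f\in L^1(\mathbb{R}_+)$, so the inner tail integral is well defined. Then I would write $\int_s^{+\infty}|\f(\tau)|\,\ud\tau=\int_0^{+\infty}|\f(\tau)|\,\mathbf{1}_{\{s\le\tau\}}\,\ud\tau$ and integrate in $s$:
\[
\int_0^{+\infty}\left|\int_s^{+\infty}\f(\tau)\,\ud\tau\right|\ud s
\le \int_0^{+\infty}\!\!\int_0^{+\infty}|\f(\tau)|\,\mathbf{1}_{\{s\le\tau\}}\,\ud\tau\,\ud s.
\]

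Next I would invoke Tonelli's theorem (valid because the integrand is nonnegative and measurable) to swap the order of integration:
\[
\int_0^{+\infty}\!\!\int_0^{+\infty}|\f(\tau)|\,\mathbf{1}_{\{s\le\tau\}}\,\ud\tau\,\ud s
=\int_0^{+\infty}|\f(\tau)|\left(\int_0^{+\infty}\mathbf{1}_{\{s\le\tau\}}\,\ud s\right)\ud\tau
=\int_0^{+\infty}|\f(\tau)|\,\tau\,\ud\tau,
\]
since $\int_0^{+\infty}\mathbf{1}_{\{s\le\tau\}}\,\ud s=\tau$ for $\tau\ge 0$. By hypothesis the last integral equals $\int_0^{+\infty}|s\f(s)|\,\ud s<+\infty$, which simultaneously shows that $s\mapsto\int_s^{+\infty}\f(\tau)\,\ud\tau$ is in $L^1(\mathbb{R}_+)$ and gives the claimed estimate.

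There is essentially no obstacle here; this is a clean application of Tonelli. The only point needing a word of care is measurability of $s\mapsto\int_s^{+\infty}\f(\tau)\,\ud\tau$, which follows either from the Fubini half of the theorem (the slice integrals of an $L^1$ function are measurable and a.e.\ finite) or from the observation that $s\mapsto\int_0^s\f(\tau)\,\ud\tau$ is absolutely continuous and $\int_s^{+\infty}\f=\int_0^{+\infty}\f-\int_0^s\f$.
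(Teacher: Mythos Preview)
Your proof is correct and follows exactly the approach indicated in the paper, which simply notes that the result is a direct consequence of Fubini's Theorem. You have spelled out the details of that application of Tonelli--Fubini carefully, including the measurability remark; nothing further is needed.
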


\begin{proof}
The proof is a direct consequence of  Fubini's Theorem.  

%\begin{align}\label{adl11}
%\int_0^{+\infty}\left|\int_s^{+\infty}\f(\tau)\ud\tau \right| \ud s & \leq \int_0^{+\infty} \int_s^{+\infty} |\f(\tau)|\ud\tau\ud s =\int_0^{+\infty} \int_0^\tau |\f(\tau)|\ud s \ud\tau \nonumber\\
%& = \int_0^{+\infty} |\tau\f(\tau)|\ud\tau
%\end{align}

%and the result is proved.

\end{proof}

\begin{lemma} \label{adl2}
Let $\f\in L^1\left(\mathbb{R}_+ \right)$.  Then:

\begin{enumerate}[(i)] %\label{adl22}
\item for any $w_1,w_2\in L^2(Q_t)$ we have \label{adl22}
\beq\label{adl23}
\left|\int_0^t \int_{\Omega} w_1(x,s)(w_2*\f)(x,s)\ud s\right|\leq \|\f\|_{L^1\left(\mathbb{R}_+ \right)}\|w_1\|_{L^2(Q_t)}\|w_2\|_{L^2(Q_t)}
\eeq 
\item for any $w_3\in L^2(\Omega)$, $w_4\in L^\infty \left(0,T; L^2(\Omega) \right) $ we have \label{adl24}
\beq\label{adl25}
\left|\int_{\Omega} w_3(x)(\f*w_4)(x,t)\ud x\right|\leq \|\f\|_{L^1\left(0,T \right)}\|w_3\|_{L^2(\Omega)} \mathop{\sup}_{0\leq \tau \leq t}
\|w_4 (\tau)\|_{L^2(\Omega)},\; \text{a.e.}\, t\in [0,T)
\eeq
\end{enumerate}
\end{lemma}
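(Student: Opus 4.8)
Both parts follow from pairing the Cauchy--Schwarz inequality with Young's convolution inequality \eqref{iq2}; no idea beyond careful bookkeeping of the integration variables is required, so I will just describe the order in which I would assemble the estimates.

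For part \eqref{adl22}, the plan is first to establish $\|w_2*\f\|_{L^2(Q_t)}\le \|\f\|_{L^1(\mathbb{R}_+)}\,\|w_2\|_{L^2(Q_t)}$: freezing $x\in\Omega$ and regarding $\tau\mapsto w_2(x,\tau)$ as an element of $L^2(0,t)$ extended by zero, I apply \eqref{iq2} with $p=2$, $T=t$, $F_1=\f$, $F_2=w_2(x,\cdot)$ to get $\|(w_2*\f)(x,\cdot)\|_{L^2(0,t)}\le\|\f\|_{L^1(0,+\infty)}\|w_2(x,\cdot)\|_{L^2(0,t)}$; squaring and integrating in $x$ over $\Omega$, which is legitimate by Tonelli's theorem, yields the claimed bound and in particular $w_2*\f\in L^2(Q_t)$. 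Then a single application of the Cauchy--Schwarz inequality on $Q_t$ to the integrand $w_1\cdot(w_2*\f)$ gives $\bigl|\int_0^t\int_\Omega w_1(w_2*\f)\,\ud x\,\ud s\bigr|\le\|w_1\|_{L^2(Q_t)}\|w_2*\f\|_{L^2(Q_t)}$, and combining the two inequalities produces \eqref{adl23}.

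For part \eqref{adl24}, I would write the convolution explicitly as $(\f*w_4)(x,t)=\int_0^t\f(t-\tau)\,w_4(x,\tau)\,\ud\tau$, substitute into the $x$-integral, and exchange the order of integration by Fubini's theorem, which is valid since $w_4\in L^\infty(0,T;L^2(\Omega))$ and $\f\in L^1$ make the integrand absolutely integrable on $\Omega\times(0,t)$ for a.e.\ $t$. This gives $\int_\Omega w_3(x)(\f*w_4)(x,t)\,\ud x=\int_0^t\f(t-\tau)\bigl(\int_\Omega w_3(x)w_4(x,\tau)\,\ud x\bigr)\,\ud\tau$. Bounding the inner integral by Cauchy--Schwarz in $x$ by $\|w_3\|_{L^2(\Omega)}\sup_{0\le\tau\le t}\|w_4(\tau)\|_{L^2(\Omega)}$, pulling this factor out of the $\tau$-integral, and noting $\int_0^t|\f(t-\tau)|\,\ud\tau=\int_0^t|\f(\sigma)|\,\ud\sigma\le\|\f\|_{L^1(0,T)}$ since $t<T$, one obtains \eqref{adl25}; the qualifier ``a.e.\ $t\in[0,T)$'' simply records that $\f*w_4$ is only defined for a.e.\ $t$.

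There is essentially no obstacle in this lemma; the only points deserving a line of justification are the interchanges of order of integration (Tonelli in part \eqref{adl22}, Fubini in part \eqref{adl24}) and the remark that extending $w_1,w_2$ by zero outside $(0,t)$ is consistent with the convolutions appearing in \eqref{qdf}.
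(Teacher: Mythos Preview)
Your proposal is correct and follows essentially the same approach as the paper: in part \eqref{adl22} both arguments combine Young's inequality \eqref{iq2} in the time variable with Cauchy--Schwarz, and in part \eqref{adl24} both use Cauchy--Schwarz in $x$ together with the triangle inequality for the $\tau$-integral. The only cosmetic difference is that the paper applies Cauchy--Schwarz in $s$ first (for each fixed $x$) and then in $x$, whereas you apply it once on all of $Q_t$; the bounds obtained are identical.
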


\begin{proof}
Part \eqref{adl22}: observe that
\begin{align}\label{adl26}
\left|\int_0^t \int_{\Omega} w_1(x,s)(w_2*\f)(x,s)\ud s\right| & \leq \int_{\Omega} \left\|w_1(x,\cdot)\right\|_{L^2(0,t)} \left\|(w_2*\f)(x,\cdot)\right\|_{L^2(0,t)}\ud x \nonumber\\
& \leq \|\f\|_{L^1\left(\mathbb{R}_+ \right)}\int_{\Omega} \left\|w_1(x,\cdot) \right\|_{L^2(0,t)}\left\|w_2(x,\cdot) \right\|_{L^2(0,t)}\ud x
\end{align} 
which gives the result.

Part \eqref{adl24}: one has

\beq\label{adl27}
\left|\int_{\Omega} w_3(x)(\f*w_4)(x,t)\ud x\right|\leq \|w_3\|_{L^2(\Omega)}\int_0^t \|w_4(x,t-\tau)\|_{L^2(\Omega)}|\f(\tau)|\ud\tau
\eeq

and the result follows.
\end{proof}

We continue by proving the following result:

\begin{lemma}\label{lm1}
Assume $b\in W^{1,1}\left((0,+\infty) \right) $ verifies: there exists $M>0$ s.t. 
\beq\label{sp1}
\text{Re}\left[\mathcal{F}b (\omega) \right]\geq \dfrac{M}{1+\omega^2},\, \forall \omega\in\mathbb{R}
\eeq
Then:
\begin{enumerate}[(i)]
\item $b(0_+)\geq M$ \label{lmh1},
\item $\left|\mathcal{L}b(z) \right|\geq \dfrac{M}{2\left(1+|z|^2 \right) }$, $\forall z\in\mathbb{C}$, $\text{Re}(z)\geq0$\label{lmh2},
\item  $\left|\mathcal{F}b(\omega) \right|\geq \dfrac{\tilde{M}}{2\left(1+|\omega|\right) }$, $\forall \omega\in\mathbb{R}$\label{lmh3}, where $\tilde{M}$ may depend on $b$.
\end{enumerate}
 
\end{lemma}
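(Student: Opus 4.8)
The plan is to derive the three conclusions in the stated order, using the hypothesis \eqref{sp1} as the sole structural input on $b$. For part \eqref{lmh1}, I would exploit the behaviour of $\mathcal{F}b(\omega)$ as $\omega\to+\infty$. Since $b\in W^{1,1}(0,+\infty)$, integration by parts gives $\mathcal{F}b(\omega)=\frac{b(0_+)}{i\omega}+\frac{1}{i\omega}\mathcal{F}b'(\omega)$, and by the Riemann--Lebesgue lemma $\mathcal{F}b'(\omega)\to 0$; hence $i\omega\,\mathcal{F}b(\omega)\to b(0_+)$, so in particular $\omega\,\mathrm{Im}(\mathcal{F}b(\omega))\to -b(0_+)$ (with the real part decaying like $o(1/\omega)$ after a second application, or at least $\omega\,\mathrm{Re}(\mathcal{F}b(\omega))\to 0$ since $\mathrm{Re}(\mathcal{F}b'(\omega))\to 0$). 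Multiplying \eqref{sp1} by $\omega^2$ and letting $\omega\to+\infty$ forces $b(0_+)=\lim \omega^2\,\mathrm{Re}(\mathcal{F}b(\omega))\cdot\frac{1}{\omega}\cdot\omega \geq \lim \frac{M\omega^2}{1+\omega^2}=M$; I would organize this limit carefully so the inequality survives. (Alternatively one can test \eqref{sp1} against a suitable approximate identity concentrated near $t=0$.)

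For part \eqref{lmh2}, the goal is to extend the lower bound from the imaginary axis to the whole closed right half-plane. I would use the maximum principle / Phragmén--Lindelöf idea applied to an auxiliary analytic function. Consider $H(z):=(1+z^2)\,\mathcal{L}b(z)$ — wait, $1+z^2$ vanishes at $z=\pm i$, so instead I would work with $h(z):=\mathcal{L}b(z)$ directly and note that $\mathcal{L}b$ is analytic on $\mathrm{Re}(z)>0$, continuous up to the boundary, and bounded (by $\|b\|_{L^1}$). The function $(1+z)^2\,\mathcal{L}b(z)$ need not be bounded, so the cleanest route is: on the imaginary axis, $\eqref{sp1}$ plus $|\mathcal{F}b(\omega)|\le \|b\|_{L^1}$ gives a two-sided control, and the zero-free/lower-bound propagation into the half-plane follows from a minimum-modulus type argument once we know $\mathcal{L}b$ does not vanish. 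Concretely, since $\mathrm{Re}(\mathcal{L}b(i\omega))=\mathrm{Re}(\mathcal{F}b(\omega))>0$ on the boundary and $\mathcal{L}b(z)\to 0$ as $|z|\to\infty$ in the half-plane at rate $\le \|b'\|_{L^1}/|z|$ (again by integration by parts), one shows $\mathrm{Re}(\mathcal{L}b(z))>0$ throughout $\mathrm{Re}(z)\ge 0$ — e.g. because $1/\mathcal{L}b$ would otherwise have a pole, or by applying the argument principle. Then the quantitative bound $|\mathcal{L}b(z)|\ge \tfrac{M}{2(1+|z|^2)}$ is obtained by applying the boundary estimate to the bounded analytic function $z\mapsto \mathcal{L}b(z)/\psi(z)$ for an explicit outer-type factor $\psi$ with $|\psi(i\omega)|\asymp (1+\omega^2)^{-1}$, such as $\psi(z)=(1+z)^{-2}$ up to the harmless zeros; I expect this comparison step, getting the constant $2$ and the $1+|z|^2$ shape exactly, to be the main technical obstacle.

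For part \eqref{lmh3}, this is the restriction of \eqref{lmh2} to $z=i\omega$, which would only give $|\mathcal{F}b(\omega)|\ge \tfrac{M}{2(1+\omega^2)}$; the claimed improvement to $\tfrac{\tilde M}{2(1+|\omega|)}$ (a weaker denominator, hence a \emph{stronger} bound for large $|\omega|$) must come from combining \eqref{sp1} with the asymptotics $|\mathcal{F}b(\omega)|\sim |b(0_+)|/|\omega|$ from part \eqref{lmh1}. So I would split: for $|\omega|\le R$ use $\mathrm{Re}(\mathcal{F}b(\omega))\ge \tfrac{M}{1+\omega^2}\ge \tfrac{M}{1+R^2}\ge c/(1+|\omega|)$; for $|\omega|\ge R$ use that $|\omega\,\mathcal{F}b(\omega)|\to b(0_+)\ge M>0$, so $|\mathcal{F}b(\omega)|\ge \tfrac{b(0_+)}{2|\omega|}\ge \tfrac{c'}{1+|\omega|}$ once $R$ is large enough; taking $\tilde M$ the smaller of the resulting constants finishes it. The dependence of $\tilde M$ on $b$ (not just on $M$) enters precisely through the choice of the crossover radius $R$, which depends on how fast $\omega\,\mathcal{F}b(\omega)$ converges, i.e. on $\|b'\|_{L^1}$.
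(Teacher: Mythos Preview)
Your part \eqref{lmh3} is essentially the paper's argument: split into small and large $|\omega|$, use \eqref{sp1} (or part \eqref{lmh2}) for the former, and for the latter use $\mathcal{F}b(\omega)=\dfrac{1}{i\omega}\bigl[b(0_+)+\mathcal{F}b'(\omega)\bigr]$ together with $\mathcal{F}b'(\omega)\to 0$ and part \eqref{lmh1}. Your remark that $\tilde M$ depends on $b$ through the crossover radius (hence through $\|b'\|_{L^1}$) is exactly right.

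For part \eqref{lmh1}, your primary argument has a genuine gap. From $b\in W^{1,1}$ you get $i\omega\,\mathcal{F}b(\omega)=b(0_+)+\mathcal{F}b'(\omega)$, hence $\omega\,\text{Re}\,\mathcal{F}b(\omega)=\text{Im}\,\mathcal{F}b'(\omega)\to 0$, but this gives \emph{no} control on $\omega^2\,\text{Re}\,\mathcal{F}b(\omega)=\omega\,\text{Im}\,\mathcal{F}b'(\omega)$; with only $b'\in L^1$ that quantity need not converge, and there is no reason its limit, if any, should equal $b(0_+)$. Your parenthetical alternative is the correct route and is what the paper does: by Fourier inversion at the jump point $t=0$ (recall $b$ is extended by $0$ on $t<0$, and $\text{Im}\,\mathcal{F}b$ is odd),
\[
b(0_+)=\frac{1}{\pi}\lim_{k\to\infty}\int_{-k}^{k}\mathcal{F}b(\omega)\,\ud\omega
=\frac{1}{\pi}\lim_{k\to\infty}\int_{-k}^{k}\text{Re}\,\mathcal{F}b(\omega)\,\ud\omega
\ \ge\ \frac{M}{\pi}\int_{\mathbb{R}}\frac{\ud\omega}{1+\omega^2}=M.
\]

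For part \eqref{lmh2}, your outline is incomplete, and comparing $|\mathcal{L}b|$ to $|\psi|$ with $\psi(z)=(1+z)^{-2}$ runs into the growth problem you yourself flag: $(1+z)^2\mathcal{L}b(z)$ is unbounded on the half-plane, so a minimum-modulus/Phragm\'en--Lindel\"of argument for that quotient is not straightforward. The paper's route is much shorter and avoids this entirely. Since $\text{Re}\,\mathcal{F}(e^{-t})(\omega)=\dfrac{1}{1+\omega^2}$, hypothesis \eqref{sp1} says precisely that $t\mapsto b(t)-Me^{-t}$ is of positive type; a standard characterization (Gripenberg--Londen--Staffans, Thm.~2.4) then gives $\text{Re}\,\mathcal{L}\bigl(b-Me^{-t}\bigr)(z)\ge 0$ for all $\text{Re}\,z\ge 0$, i.e.
\[
|\mathcal{L}b(z)|\ \ge\ \text{Re}\,\mathcal{L}b(z)\ \ge\ M\,\text{Re}\,\frac{1}{1+z}\ =\ \frac{M(1+z_1)}{(1+z_1)^2+z_2^2},\qquad z=z_1+iz_2,\ z_1\ge 0,
\]
and an elementary algebraic inequality bounds the right-hand side below by $\dfrac{M}{2(1+|z|^2)}$. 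If you want to stay with bare-hands complex analysis, the fix is to compare \emph{real parts}, not moduli: apply the maximum principle to the bounded harmonic function $\text{Re}\bigl(\mathcal{L}b(z)-\tfrac{M}{1+z}\bigr)$, which is $\ge 0$ on the imaginary axis by \eqref{sp1} and tends to $0$ at infinity; this reproduces the displayed inequality and is, in effect, a direct proof of the cited positive-type theorem in this special case.
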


\begin{proof}
Part \eqref{lmh1} is a direct consequence of 

$$b(0_+)=\dfrac{1}{\pi}\displaystyle \mathop{\lim}_{k\to+\infty}\int_{-k}^k \mathcal{F}b(\omega)\ud \omega=\dfrac{1}{\pi}\displaystyle \mathop{\lim}_{k\to+\infty}\int_{-k}^k \text{Re}\left[\mathcal{F}b(\omega) \right] \ud \omega $$ 

and of \eqref{sp1}

Part \eqref{lmh2}:  one has $\text{Re}\left[\left(\mathcal{F} e^{-t}\right)(\omega) \right]=\dfrac{1}{1+\omega^2}$.  This fact, together with  Theorem 2.4 on page 494 of \cite{grip1} imply that the function $t\in[0,+\infty)\mapsto b(t)-Me^{-t}$ is of positive type.  From the same Theorem one also gets $\text{Re}\left[\mathcal{L}\left(b-M e^{-t} \right) (z) \right] \geq0$, $\forall z\in\mathbb{C}$ with $\text{Re}(z)\geq0$.  The later in turn implies $\text{Re}\left[\mathcal{L}b (z)\right]\geq M \dfrac{1+z_1}{(1+z_1)^2+z_2^2}$, $\forall z=z_1+iz_2$ with $z_1,z_2\in\mathbb{R}$, $z_1\geq0$. The statement in \eqref{lmh2} now follows.  

Part \eqref{lmh3} is a consequence of \eqref{lmh2} and the fact that $b\in W^{1,1}(0,+\infty)$.  Indeed, from  $\left|\mathcal{F}b(\omega)\right|\geq\dfrac{M}{ 2(1+\omega^2) }$, $\forall \omega\in\mathbb{R}$, it suffices to prove that there exist $m_1,m_2>0$ s.t. $\left|\mathcal{F}b(\omega)\right|\geq\dfrac{m_1}{|\omega|}$, $\forall \omega\in\mathbb{R}$ with $|\omega|\geq m_2$.  This follows from $\mathcal{F}b(\omega)=\dfrac{1}{i\omega} \left[\mathcal{F}b'(\omega) +b(0_+)\right] $, the fact that $\displaystyle \mathcal{F}b'(\omega)\mathop{\longrightarrow}_{|\omega|\to+\infty}0$ and \eqref{lmh1}.
      
\end{proof}

The following Lemma is a G\aa rding type inequality with boundary terms.  It is proved in \cite{bd1} using preliminary results due to Staffans \cite{stf1} (see also \cite{gar1} and \cite{tay}).  Here we shorten the original proof of \cite{bd1} and remove the extraneous  assumptions $b\in W^{3,1}(0,+\infty)$, $b''\geq0$.   

\begin{lemma}\label{ah}
Assume $b\in L^1_{\mathbb{R}_+}\left(\mathbb{R} \right)$ is such that $\text{Re}\left(\hat{b}(\omega)  \right) \geq \dfrac{M_1}{1+\omega^2}$, for any $\omega\in \mathbb{R}$, where $M_1>0$.  Then, for any $T>0$, $w\in \mathscr{C}^1\left( [0,T], L^2(\Omega)\right) $  and $t\in [0,T)$, we have

\begin{align}\label{ah1}
& \int_\Omega w^2(x,t)\ud x + \int_0^t\int_\Omega w^2(x,s)\ud x \ud s  \nonumber\\
& \leq C \left[ \dfrac{1}{M_1}Q(w,t,b)+\dfrac{1}{M_1}Q(w_t,t,b)+ \int_\Omega w^2(x,0)\ud x\right]  
\end{align}

with $C>0$ independent of $T$, $t$, $w$ and $b$.  

Moreover, if $w\in \mathscr{C}^0\left( [0,T], L^2(\Omega)\right) $, then, for any $t\in [0,T]$,

\begin{align}\label{ah2}
& \int_\Omega w^2(x,t)\ud x + \int_0^t\int_\Omega w^2(x,s)\ud x \ud s  \nonumber\\
& \leq C \left[ \dfrac{1}{M_1}Q(w,t,b)+\dfrac{1}{M_1} \displaystyle\mathop{\liminf}_{h\to 0_+}  \dfrac{1}{h^2}Q(\triangle_h w,t,b)+ \int_\Omega w^2(x,0)\ud x\right]  
\end{align}

\end{lemma}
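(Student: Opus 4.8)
The plan is to reduce everything to the scalar Fourier-analytic statement and then integrate over $\Omega$, handling the $\mathscr C^1$ case first and obtaining the $\mathscr C^0$ case by a finite-difference approximation. Fix $x\in\Omega$ and write $\f(s)=w(x,s)$ for $s\in[0,t]$, extended by $0$ outside $(0,t)$. Because the hypothesis on $\hat b$ is exactly condition \eqref{sp1} with $M=M_1$, Lemma \ref{lm1} applies: in particular $b(t)-M_1 e^{-t}$ is of positive type, hence $Q(\f,t,b)\ge M_1 Q(\f,t,e^{-\cdot})$ and similarly for $\f_t$. Thus it suffices to prove the scalar inequality
\[
\f(t)^2+\int_0^t\f(s)^2\,\ud s\le C\Bigl[Q(\f,t,e^{-\cdot})+Q(\f_t,t,e^{-\cdot})+\f(0)^2\Bigr],
\]
with $C$ absolute, and then integrate in $x$ over $\Omega$ (Fubini is legitimate since $w\in\mathscr C^1([0,T];L^2(\Omega))$). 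So the whole problem is the model kernel $e^{-t}$.

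For the kernel $e^{-t}$ there is a standard ODE trick that makes the computation transparent: set $z(s)=\int_0^s e^{-(s-\tau)}\f(\tau)\,\ud\tau=(e^{-\cdot}*\f)(s)$, which solves $z'=\f-z$, $z(0)=0$. Then $Q(\f,t,e^{-\cdot})=\int_0^t \f z=\int_0^t (z'+z)z=\tfrac12 z(t)^2+\int_0^t z^2$, so $Q(\f,t,e^{-\cdot})\ge\int_0^t z^2$ and also $Q(\f,t,e^{-\cdot})\ge\tfrac12 z(t)^2$. Applying the same identity to $\f_t$ in place of $\f$: let $\zeta=e^{-\cdot}*\f_t$; integrating by parts, $\zeta(s)=\f(s)-\f(0)e^{-s}-z(s)$, so $Q(\f_t,t,e^{-\cdot})=\tfrac12\zeta(t)^2+\int_0^t\zeta^2\ge\int_0^t\bigl(\f(s)-\f(0)e^{-s}-z(s)\bigr)^2\,\ud s$. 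Combining with $\int_0^t z^2\le Q(\f,t,e^{-\cdot})$ and the elementary $\int_0^t \f(0)^2 e^{-2s}\,\ud s\le\tfrac12\f(0)^2$ via $(a+b+c)^2\le 3(a^2+b^2+c^2)$ type bounds gives $\int_0^t\f^2\le C\bigl[Q(\f,t,e^{-\cdot})+Q(\f_t,t,e^{-\cdot})+\f(0)^2\bigr]$. For the pointwise term $\f(t)^2$ one writes $\f(t)=\zeta(t)+\f(0)e^{-t}+z(t)$ and bounds each piece: $z(t)^2\le 2Q(\f,t,e^{-\cdot})$, $\zeta(t)^2\le 2Q(\f_t,t,e^{-\cdot})$, $\f(0)^2e^{-2t}\le\f(0)^2$. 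This proves \eqref{ah1}.

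For \eqref{ah2}, assume only $w\in\mathscr C^0([0,T];L^2(\Omega))$ and fix $t\in[0,T]$; the idea is to apply \eqref{ah1} to the Steklov-type average $w_h(x,s)=\tfrac1h\int_s^{s+h}w(x,\sigma)\,\ud\sigma$, which lies in $\mathscr C^1$, satisfies $(w_h)_t=\tfrac1h\triangle_h w$, and converges to $w$ in $\mathscr C^0([0,T'];L^2(\Omega))$ on subintervals as $h\to0_+$. Feeding $w_h$ into \eqref{ah1} produces, after also using that $Q(\cdot,t,b)$ depends continuously on its first argument in the $L^2(Q_t)$ topology (Lemma \ref{adl2}(i) gives exactly this continuity, since $b\in L^1$), an inequality whose right-hand side has a $Q((w_h)_t,t,b)=h^{-2}Q(\triangle_h w,t,b)$ term; passing to $\liminf_{h\to0_+}$ and using $w_h(\cdot,0)\to w(\cdot,0)$, $w_h(\cdot,t)\to w(\cdot,t)$ in $L^2(\Omega)$ and lower semicontinuity of the norm under the limit yields \eqref{ah2}. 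One technical point to watch: $w_h$ is only defined on $[0,T-h]$, so one should either restrict to $t<T$ and let $h$ be small, or extend $w$ slightly; since \eqref{ah2} is claimed for $t\in[0,T]$ this needs a short argument (e.g. extend $w$ continuously past $T$, or prove it for $t<T$ and pass to the limit $t\to T$ using continuity of both sides in $t$).

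The main obstacle, conceptually, is not any single estimate but organizing the reduction cleanly: the key realization is that the hypothesis $\mathrm{Re}\,\hat b(\omega)\ge M_1/(1+\omega^2)$ is precisely a \emph{strong positive type} statement with comparison kernel $e^{-t}$ (this is the content of Lemma \ref{lm1}(ii)'s proof via Theorem 2.4 of \cite{grip1}), which collapses the general-kernel inequality to the explicitly solvable $e^{-t}$ case; once that is seen, every remaining step is the elementary ODE computation above plus a routine density/finite-difference argument. The $1/M_1$ factors in \eqref{ah1}--\eqref{ah2} are bookkeeping from the single application of strong positivity. Compared with the proof in \cite{bd1}, dropping the assumptions $b\in W^{3,1}$ and $b''\ge0$ is exactly what this reduction buys us, since nothing above uses more than $b\in L^1_{\mathbb R_+}(\mathbb R)$ together with the frequency-domain lower bound.
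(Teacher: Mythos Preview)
Your proof is correct and takes a genuinely different route from the paper's. The paper stays entirely in the frequency domain: it writes $Q(w,t,b)$ and $Q(w_t,t,b)$ via Parseval as integrals of $\mathrm{Re}\,\hat b(\tau)\,|\widehat{\tilde w}|^2$, uses the lower bound $M_1/(1+\tau^2)$ directly, expands $|\widehat{\tilde w'}|^2$ through the distributional identity $D\tilde w=\tilde w'+w(0)\delta_0-w(t)\delta_t$, and then balances cross terms with an algebraic inequality and a weight $\beta\sqrt{|\tau|}$, finally choosing a free parameter $\lambda$ to absorb the remainder. You instead observe (via the same Gripenberg--Londen--Staffans characterization invoked in Lemma~\ref{lm1}) that the hypothesis is exactly ``$b-M_1e^{-\cdot}$ is of positive type'', reduce at once to the explicit kernel $e^{-t}$, and then exploit the ODE $z'=\f-z$ to compute $Q(\f,t,e^{-\cdot})=\tfrac12 z(t)^2+\int_0^t z^2$ and the companion identity for $\f_t$ in closed form; the estimate then drops out from $\f=\zeta+\f(0)e^{-\cdot}+z$ with no Fourier analysis and no parameter tuning. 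The reduction to $\mathscr C^0$ via Steklov averages $w_h$ is the same in both proofs.

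What each approach buys: the paper's frequency-domain argument is more robust in spirit (it would adapt to other comparison weights than $1/(1+\omega^2)$), but requires the somewhat delicate balancing with $\sqrt{|\tau|}$ and the algebraic inequality $|a+b+c|^2\ge\tfrac12(|a|^2+|b|^2)-2|a||b|-3|c|^2$. Your ODE approach is shorter and completely elementary, and makes the constants explicit, at the cost of being tied to the specific comparison kernel $e^{-t}$ (which is, however, exactly what the hypothesis singles out). One cosmetic remark: your ``fix $x\in\Omega$'' framing is slightly informal for $w\in\mathscr C^1([0,T];L^2(\Omega))$; the clean reading is to run the ODE identities for $z,\zeta$ as $L^2(\Omega)$-valued functions of $s$ and take $L^2(\Omega)$-norms, which is what the final integration over $\Omega$ amounts to.
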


\begin{proof}
Assuming that inequality \eqref{ah1} holds true, we undertake to proving \eqref{ah2}.  Let  $w\in \mathscr{C}^0\left( [0,T], L^2(\Omega)\right) $ and $t\in [0,T)$ be fixed.  For $0<h<(T-t)/2$, define the function $w_h\in \mathscr{C}^1\left( \left[ 0,(t+T)/2\right] , L^2(\Omega)\right) $ by

\beq\label{ah3}
w_h(s):=\dfrac{1}{h}\int_{s}^{s+h}w(\sigma)\ud\sigma,\, s\in \left[ 0,(t+T)/2 \right) 
\eeq

Applying \eqref{ah1} to $w_h$ and passing to the limit $\displaystyle\mathop{\liminf}_{h\to 0_+}$ gives \eqref{ah2}.  

We now prove \eqref{ah1}.  Let $w\in \mathscr{C}^1\left( [0,T], L^2(\Omega)\right) $, $t\in[0,t)$ be fixed, and let $\tilde{w}\in L^2_{[0,t)}\left(\mathbb{R},L^2(\Omega) \right) $ be defined by $\tilde{w}=w$ a.e. in $[0,t]$ and $\tilde{w}=0$ outside.  Denote by $D \tilde{w}$ the distributional derivative of $\tilde{w}$ and by $\tilde{w}'$ its regular part, i.e. 

\beq\label{ah4}
D \tilde{w}=\tilde{w}'+w(0)\delta_0-w(t)\delta_t
\eeq

Due to the Parseval identity we have

\beq\label{ah5}
Q(w,t,b)=\dfrac{1}{2\pi} \int_\mathbb{R} \int_\Omega \text{Re}\left(\hat{b}(\tau) \right) \left|\widehat{\tilde{w}}(x,\tau)\right|^2\ud x\ud\tau
\eeq

and a similar equation with $w'$ instead of $w$ as well.  For $\lambda>0$ (to be later determined) define $I(w)$ by

\beq\label{ah6}
I(w):= Q\left(\tilde{w}',t,b \right) + \lambda Q\left(\tilde{w},t,b \right)+ \dfrac{3 M_1}{2}\int_\Omega w^2(x,0)\ud x
\eeq

By \eqref{ah4} and \eqref{ah5} and the strong positivity of $b$,

\begin{align}\label{ah7}
& I(w)\geq \nonumber\\
& \dfrac{M_1}{2\pi} \int_\mathbb{R} \int_\Omega \left( \left|i \tau \widehat{\tilde{w}}(\tau)-w(0) +w(t)e^{-i\tau t}  \right|^2 + \lambda\left| \widehat{\tilde{w}}(\tau)\right|^2+3|w(0)|^2 \right) \ud x \dfrac{\ud\tau}{1+\tau^2}
\end{align}

Since for any $(a,b,c)\in\mathbb{C}^3$ we have $|a+b+c|^2\geq \dfrac{|a|^2+|b|^2}{2}- 2|a||b| - 3|c|^2$, inequality \eqref{ah7} implies

\begin{align}\label{ah8}
& I(w)\geq \nonumber\\
& \dfrac{M_1}{2\pi} \int_\mathbb{R} \int_\Omega \left( \dfrac{|\tau|^2+2\lambda}{2}\left|\widehat{\tilde{w}}(\tau)\right|^2+|w(t)|^2 \beta \sqrt{|\tau|}-2|w(t)||\tau|\left|\widehat{\tilde{w}}(\tau)\right| \right) \ud x \dfrac{\ud\tau}{1+\tau^2}  
\end{align}

with 

\beq\label{ah9}
\beta=\displaystyle \dfrac{1}{2}\left(\int_\mathbb{R}\dfrac{\ud\tau}{1+\tau^2}  \right) /\left(\int_\mathbb{R}\dfrac{\sqrt{|\tau|}}{1+\tau^2}\ud\tau  \right)
\eeq 

But:

\begin{align} \label{ah10}
2 |w(t)| |\tau| \left|\widehat{\tilde{w}}(\tau)\right| & \leq \dfrac{\beta}{2} \sqrt{|\tau|} |w(t)|^2+ \dfrac{2}{\beta} |\tau|^{3/2} \left|\widehat{\tilde{w}}(\tau)\right|^2 \nonumber\\
& \leq \dfrac{\beta}{2} \sqrt{|\tau|} |w(t)|^2+\left(\dfrac{|\tau|^2}{4}+L \right)\left|\widehat{\tilde{w}}(\tau)\right|^2   
\end{align}

with $L>0$ independent of $t$, $w$, $b$.  Choose $\lambda=L+1/4$.  By \eqref{ah8} and \eqref{ah10} we get

\begin{align}\label{ah11}
& I(w)\geq 
\dfrac{M_1}{2\pi} \int_\mathbb{R} \int_\Omega \left( \dfrac{|\tau|^2+1}{4}\left|\widehat{\tilde{w}}(\tau)\right|^2+ \dfrac{\beta \sqrt{|\tau|}}{2}|w(t)|^2 \right) \ud x \dfrac{\ud\tau}{1+\tau^2}  
\end{align}

which is \eqref{ah1}.

\end{proof}

We now prove that, under suitable assumptions application $w\mapsto b*w$ is invertible, and obtain an inversion formula. We use truncated Neumann series and a special assumption (see $(b_3)$ below) in order to control the remainder term.   

For $b\in L^1(\mathbb{R})$, let the $k$-times convolution de denoted as $b^{*k}:=\underbrace{b*b*\cdots *b}_{k\,\text{times}}$.  For $1\leq q\leq +\infty$ and $t_0\in(0,+\infty]$, the mapping $\mathcal{R}_{t_0,q}$ is defined by:

\begin{displaymath}
\mathcal{R}_{t_0,q}:\left\{ \begin{array}{ll}
L^q_{[0,t_0)}(-\infty,t_0) & \longrightarrow  W^{1,q}_{[0,t_0)}(-\infty,t_0)\\                   
w & \mapsto b*w                   
\end{array}\right.
\end{displaymath}

Here $b*w(t):=\displaystyle \int_0^t b(t-s)w(s)\ud s$, for any $t<t_0$.  We always write $\mathcal{R}$ in place of $\mathcal{R}_{+\infty,2}$.    
 
Next, function $b$ is assumed to comply with:

\begin{enumerate}[$(\text{b}_1)$]\label{hl0}
\item $b\in W^{1,1}(0,+\infty)$, $b(0_+)\neq0$,\label{hl1}

\item\label{hl2} there exists $M>0$, $\beta>0$ s.t.
\beq\label{hl3}
\left|\mathcal{L}b(z) \right|\geq\dfrac{M}{1+|z|^\beta},\,\forall z\in\mathbb{C},\text{Re}(z)\geq0
\eeq

\item\label{hl4} there exists $p\in\mathbb{N}^*$, $p\geq2$ s.t.
\beq\label{hl5}
\mathcal{F}^{-1}\left[ \dfrac{\left( \mathcal{F}b'\right)^p}{\mathcal{F}b} \right]\in L^1(\mathbb{R}) 
\eeq
\end{enumerate}

Notice that $(\text{b}_1)$ and $(\text{b}_2)$ imply the following: there exists $M>0$ s.t.

\beq\label{hl7}
\left|\mathcal{F}b(\omega) \right|\geq \dfrac{M}{1+|\omega|},\,\forall \omega\in\mathbb{R}
\eeq

(see the proof of part (iii) in Lemma \ref{lm1}).

Our goal is to prove the following inversion Theorem:

\begin{theorem}[{\bf Inversion Theorem}]\label{il}
Let the assumptions  $(\text{b}_1)$ - $(\text{b}_3)$ hold true.  Then:

\begin{enumerate}[$(i)$]\label{il1}
\item\label{il2} for any $1\leq q \leq +\infty$ and $t_0\in(0,+\infty]$, the mapping $\mathcal{R}_{t_0,q}$
is a Banach isomorphism;
\item\label{il3} functions $B_1$, $B_2$ that depend only on $b$  and are being given by
\beq\label{il4}
B_1=\sum_{k=1}^{p-1} (-1)^k \dfrac{(b')^{*k}}{b^{k+1}(0_+)}
\eeq
\beq\label{il5}
B_2=\dfrac{(-1)^p}{b^{p}(0_+)}\mathcal{F}^{-1}\left[\dfrac{\left( \mathcal{F}b'\right)^p}{\mathcal{F}b} \right],
\eeq
belong to $L^1_{\mathbb{R}_+}(\mathbb{R})$;
\item\label{il6} for any $l\in W^{1,q}_{[0,t_0)}(-\infty,t_0)$, one has
\beq\label{il7}
\mathcal{R}^{-1}_{t_0,q}(l)=\dfrac{l'}{b(0_+)}+B_1*l'+B_2*l
\eeq
\end{enumerate} 
\end{theorem}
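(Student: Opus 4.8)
The plan is to treat the three claims in the natural logical order: first establish the inversion formula \eqref{il7} as a formal identity, then verify that the kernels $B_1,B_2$ lie in $L^1_{\mathbb{R}_+}(\mathbb{R})$, and finally deduce from this that $\mathcal{R}_{t_0,q}$ is a Banach isomorphism for every $q$ and every $t_0$. The guiding idea is a \emph{truncated Neumann series} for the convolution operator $w\mapsto b*w$. If $l=b*w$, then differentiating gives $l'=b(0_+)w+b'*w$ on $(0,t_0)$ (here the boundary term at $0$ is absorbed because all functions are extended by $0$ and their supports lie in $[0,t_0)$, so no Dirac mass at $t_0$ appears). Hence
\[
w=\frac{l'}{b(0_+)}-\frac{1}{b(0_+)}\,b'*w .
\]
Iterating this fixed-point relation $p$ times produces
\[
w=\sum_{k=0}^{p-1}\frac{(-1)^k}{b^{k+1}(0_+)}(b')^{*k}*l'
+\frac{(-1)^p}{b^{p}(0_+)}(b')^{*p}*w,
\]
and the first sum, after peeling off the $k=0$ term, is exactly $\dfrac{l'}{b(0_+)}+B_1*l'$ with $B_1$ as in \eqref{il4}. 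It remains to rewrite the remainder $\dfrac{(-1)^p}{b^p(0_+)}(b')^{*p}*w$ as $B_2*l$: since on the half-line $l=b*w$, one has, at the level of Laplace transforms, $\mathcal{L}\big((b')^{*p}*w\big)=\dfrac{(\mathcal{L}b')^p}{\mathcal{L}b}\,\mathcal{L}l$, so formally $(b')^{*p}*w=\mathcal{F}^{-1}\!\left[\dfrac{(\mathcal{F}b')^p}{\mathcal{F}b}\right]*l$, which gives \eqref{il5} and \eqref{il7}. The only genuine analytic content in claim $(iii)$ is justifying these manipulations rigorously (the Laplace inversion, the fact that $\mathcal{L}b$ has no zeros in the closed right half-plane, which is exactly what $(\text{b}_2)$ via \eqref{hl3} guarantees, and the legality of dividing by it).

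**The kernels $B_1,B_2$.** For $B_1$ this is immediate: $b'\in L^1(\mathbb{R}_+)$ by $(\text{b}_1)$, convolution powers of an $L^1_{\mathbb{R}_+}$ function stay in $L^1_{\mathbb{R}_+}$, and $b(0_+)\neq0$, so each summand in \eqref{il4} is in $L^1_{\mathbb{R}_+}(\mathbb{R})$ and so is the finite sum. For $B_2$ the membership in $L^1(\mathbb{R})$ is precisely hypothesis $(\text{b}_3)$ (equation \eqref{hl5}); what still needs an argument is that the \emph{support} of $B_2$ is contained in $[0,+\infty)$. This is a Paley–Wiener type assertion: $\dfrac{(\mathcal{F}b')^p}{\mathcal{F}b}$ should be the Fourier transform of a causal $L^1$ function. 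I would argue this by showing that $z\mapsto \dfrac{(\mathcal{L}b'(z))^p}{\mathcal{L}b(z)}$ is holomorphic and bounded on the open right half-plane $\mathrm{Re}(z)>0$ — holomorphy because $\mathcal{L}b$ has no zeros there by \eqref{hl3}, boundedness because $|\mathcal{L}b'|\le \|b'\|_{L^1}$ and $|\mathcal{L}b|\ge M/(1+|z|^\beta)$, so the quotient grows at most polynomially; combined with the fact that its boundary values on $i\mathbb{R}$ are the $L^1$-function's Fourier transform, one concludes causality of $B_2$ by the Hardy-space characterization of $H^1$/$H^\infty$ boundary values (or, more elementarily, by noting $B_2$ is an $L^1$-limit of causal functions built from truncated Neumann series). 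I expect \textbf{this causality/Paley–Wiener step to be the main obstacle}, since it is the one place where $(\text{b}_2)$ is used in an essential, non-formal way and where care with the polynomial growth versus $L^1$ boundary behaviour is needed.

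**The isomorphism.** Granting $(ii)$ and $(iii)$, claim $(i)$ follows quickly. The map $\mathcal{R}_{t_0,q}:L^q_{[0,t_0)}(-\infty,t_0)\to W^{1,q}_{[0,t_0)}(-\infty,t_0)$ is bounded: $\|b*w\|_{L^q}\le\|b\|_{L^1}\|w\|_{L^q}$ by \eqref{iq2}, and $(b*w)'=b(0_+)w+b'*w$ shows $(b*w)'\in L^q$ with the analogous bound, while the support condition and vanishing at $0$ are preserved by causal convolution. Conversely, the formula \eqref{il7} defines a bounded operator $W^{1,q}_{[0,t_0)}\to L^q_{[0,t_0)}$: $l'/b(0_+)$ is bounded in $L^q$, and $B_1*l'$, $B_2*l$ are bounded in $L^q$ by \eqref{iq2} using $B_1,B_2\in L^1(\mathbb{R})$, with causality again preserved because $B_1,B_2$ are supported in $[0,+\infty)$. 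That this candidate is a two-sided inverse is exactly the formal computation above read backwards; since both $\mathcal{R}_{t_0,q}$ and the operator in \eqref{il7} are continuous and agree with genuine inverses on the dense subspace of smooth compactly supported functions (where all the Laplace-transform manipulations are unquestionably valid), they are inverses on the whole space by density and continuity. Finally, the open mapping theorem — or simply the explicit bounded inverse just produced — gives that $\mathcal{R}_{t_0,q}$ is a Banach isomorphism, uniformly in $t_0$.
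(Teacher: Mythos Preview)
Your truncated Neumann series derivation is exactly the paper's Step~2 of Lemma~\ref{pl2}, and your passage from the formula to the Banach isomorphism via density is also how the paper closes the argument. The genuine divergence is in the causality of $B_2$, which you correctly single out as the crux.

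You propose to prove $\operatorname{supp}B_2\subset[0,\infty)$ directly by a Paley--Wiener argument on $\Phi(z)=(\mathcal{L}b')^p/\mathcal{L}b$. This can be made to work, but only via the Paley--Wiener--Schwartz theorem for tempered distributions: $\Phi$ is holomorphic on $\mathrm{Re}\,z>0$ by $(\text{b}_2)$ and has at most polynomial growth $|\Phi(z)|\le \|b'\|_1^p(1+|z|^\beta)/M$ there, so its boundary value is the Fourier transform of a tempered distribution supported in $[0,\infty)$; since that boundary value is $\mathcal{F}B_2$ up to a constant and $B_2\in L^1$, you are done. The polynomial growth means you cannot stay inside the $H^2$ or $H^\infty$ Hardy-space versions of Paley--Wiener, so your parenthetical about ``Hardy-space characterization'' needs to be replaced by the distributional statement.

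The paper reverses your order: it first proves $\mathcal{R}=\mathcal{R}_{+\infty,2}$ is surjective onto $H^1_{\mathbb{R}_+}$ \emph{before} knowing anything about $\operatorname{supp}B_2$. The trick is to apply the Bromwich--Mellin contour integral not to $B_2$ but to $\mathcal{L}r/\mathcal{L}b$ for a test function $r\in\mathscr{C}^\infty_c(0,\infty)$; because $r$ is smooth, this quotient decays as fast as one likes, the contour integral converges absolutely, and shifting the contour to $\mathrm{Re}\,z\to+\infty$ shows the preimage $w$ vanishes for $t<0$. This gives surjectivity on a dense set, and an a~priori estimate $\|w\|_{L^2}\le C\|\mathcal{R}w\|_{H^1}$ (from \eqref{hl7}) shows the range is closed. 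With the isomorphism in hand, causality of $B_2$ then drops out elementarily (Step~3): for $\rho\in\mathscr{D}_{\mathbb{R}_+}$ the representation formula applied to $w=\mathcal{R}^{-1}\rho$ forces $(B_2*\rho)(t)=0$ for $t<0$, and letting $\rho$ approach a Dirac mass gives $B_2=0$ on $(-\infty,0)$.

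So both routes are valid; yours is more direct but invokes distributional Paley--Wiener as a black box, while the paper's ordering trades that for a one-line contour-shift applied to a rapidly decaying integrand. A minor caveat on your density step: $C_c^\infty$ is not dense in $L^\infty$, so the case $q=\infty$ needs a separate word (the paper handles finite $t_0$ by a reflection extension from the $t_0=+\infty$ case).
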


For the proof we first need to introduce and prove two preliminary Lemmas.

\begin{lemma}\label{pl1}
Assume that $b\in W^{1,1}\left( {\mathbb{R}^*_+}\right)$, $b(0_+)\neq0$.  Let $1\leq q\leq +\infty$, $t_0\in(0,+\infty)$.  Then $\mathcal{R}_{t_0,q}$ is a continuous injection.
\end{lemma}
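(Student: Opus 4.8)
The plan is to verify separately that $\mathcal{R}_{t_0,q}$ is a bounded linear map (continuity) and that its kernel is trivial (injectivity); the first rests on Young's inequality \eqref{iq2}, the second on a short Volterra-type continuation argument. The key preliminary observation, used in both parts, is that if $b\in W^{1,1}(0,+\infty)$ is extended by $0$ to $\mathbb{R}$ then its distributional derivative on $\mathbb{R}$ is $b'+b(0_+)\delta_0$, so that for every $w\in L^q_{[0,t_0)}(-\infty,t_0)$,
\[
(b*w)' \;=\; b(0_+)\,w \;+\; b'*w \qquad\text{on }(-\infty,t_0).
\]

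For continuity (and to see that $\mathcal{R}_{t_0,q}$ is even well-defined), I would note that $\mathrm{supp}(b*w)\subset\mathrm{supp}(b)+\mathrm{supp}(w)\subset[0,+\infty)$, so $b*w$ has support in $[0,t_0)$; and by \eqref{iq2},
\[
\|b*w\|_{L^q(-\infty,t_0)}\le\|b\|_{L^1(0,+\infty)}\,\|w\|_{L^q(-\infty,t_0)},
\]
while the identity above together with \eqref{iq2} again give
\[
\|(b*w)'\|_{L^q(-\infty,t_0)}\le\bigl(|b(0_+)|+\|b'\|_{L^1(0,+\infty)}\bigr)\|w\|_{L^q(-\infty,t_0)}.
\]
Hence $b*w\in W^{1,q}_{[0,t_0)}(-\infty,t_0)$ and $\mathcal{R}_{t_0,q}$ is continuous, uniformly in $q\in[1,+\infty]$.

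For injectivity, suppose $w\in L^q_{[0,t_0)}(-\infty,t_0)$ satisfies $b*w=0$. Differentiating and using the identity yields the Volterra equation of the second kind $w=-b(0_+)^{-1}\,b'*w$ on $(0,t_0)$, where the hypothesis $b(0_+)\ne0$ is essential. Since $b'\in L^1(0,+\infty)$, the quantity $\eta(\varepsilon):=|b(0_+)|^{-1}\|b'\|_{L^1(0,\varepsilon)}$ tends to $0$ as $\varepsilon\to0_+$, so I fix $\varepsilon_0>0$ with $\eta(\varepsilon_0)<1$. A standard continuation then finishes: if $w=0$ a.e.\ on $[0,a]$ for some $a\in[0,t_0)$, then for $t$ in $J:=(a,\min(a+\varepsilon_0,t_0))$ the equation reads $w(t)=-b(0_+)^{-1}\int_a^t b'(t-s)w(s)\,\ud s$, so \eqref{iq2} gives $\|w\|_{L^q(J)}\le\eta(\varepsilon_0)\|w\|_{L^q(J)}$, whence $w=0$ on $J$; iterating from $a=0$ covers $[0,t_0)$ in finitely many steps since $t_0<+\infty$ and $\varepsilon_0$ is fixed, so $w=0$.

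There is no serious obstacle here: the one point that needs attention is the boundary term $b(0_+)\delta_0$ arising when one differentiates $b*w$ (the kernel is discontinuous at $0$ after extension by zero), which is precisely where $b(0_+)\ne0$ enters — without it the Volterra equation degenerates. As an alternative to the continuation argument, injectivity also follows directly from the Titchmarsh convolution theorem, using that $b$ is continuous with $b(0_+)\ne0$ (hence $\inf\mathrm{supp}\,b=0$) and that $w$ is supported in $[0,t_0)$; I would nonetheless present the elementary Volterra argument to keep the section self-contained.
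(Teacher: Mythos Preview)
Your proof is correct. The continuity part is essentially identical to the paper's. For injectivity you and the paper both reduce to the Volterra equation $w=-b(0_+)^{-1}\,b'*w$, but you then argue differently: you localize in time, choosing $\varepsilon_0$ with $|b(0_+)|^{-1}\|b'\|_{L^1(0,\varepsilon_0)}<1$ and propagating $w=0$ across $[0,t_0)$ in steps of length $\varepsilon_0$. The paper instead uses an exponential weight: it multiplies the Volterra equation by $e^{-\theta s}$, sets $w_1=e^{-\theta s}w$ and $b_1=b(0_+)^{-1}e^{-\theta s}b'$, and observes that $\|b_1\|_{L^1(\mathbb{R}_+)}\to0$ as $\theta\to+\infty$, so a single application of Young's inequality on the whole interval gives $\|w_1\|_{L^q(0,t_0)}\le\|b_1\|_{L^1}\|w_1\|_{L^q(0,t_0)}<\|w_1\|_{L^q(0,t_0)}$ unless $w_1=0$. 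The exponential-weight trick is a bit slicker (one step, no iteration) and does not use the finiteness of $t_0$; your continuation argument is more elementary and transparent but relies on $t_0<+\infty$ to terminate, which is harmless here since that is the lemma's hypothesis. Your remark about Titchmarsh's theorem is a valid third route.
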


\begin{proof}
We begin by showing $\mathcal{R}_{t_0,q}$ is well defined and continous.  Since $b\in W^{1,1}\left( {\mathbb{R}^*_+}\right)$, it is clear that for any $w\in L^q_{[0,t_0)}(-\infty, t_0)$, the function $b*w$ belongs to  $W^{1,q}_{[0,t_0)}(-\infty, t_0)$.  Moreover, $(b*w)'=\left[b(0_+)w+b'*w \right]$.  Hence 

\beq\label{pl12}
\displaystyle \left\| \mathcal{R}_{t_0,q}(w)\right\|_{W^{1,q}(0,t_0)}\leq \left[|b(0_+)|+\|b\|_{W^{1,1}\left( \mathbb{R}^*_+\right) } \right] \|w\|_{L^q(0,t_0)}
\eeq  

which proves $\mathcal{R}_{t_0,q}$ is indeed continous.

Next, assume $w\in L^q_{[0,t_0)}(-\infty, t_0)$ satisfies $\mathcal{R}_{t_0,q}(w)=0$.  Derivating the later leads to

\beq\label{pl13}
w(s)+\int_0^s \dfrac{b'(s-\tau)}{b(0_+)}w(\tau)\ud\tau =0,\, \text{a.e.}\, s<t_0
\eeq

Multiply \eqref{pl13} by $e^{-\theta s}$, $\theta>0$, and set $w_1(s)=e^{-\theta s}w(s)$, $b_1(s)=\dfrac{b'(s)}{b(0_+)}e^{-\theta s}$.  Equality \eqref{pl13} can now be re-written as

\beq\label{pl14}
w_1(s)+\int_0^t b_1(s-\tau)w_1(\tau)\ud\tau =0,\, \text{a.e.}\, s<t_0
\eeq

It implies that 

\beq\label{pl15}
\|w_1\|_{L^q(0,t_0)}\leq \|b_1\|_{L^1\left( \mathbb{R}^*_+\right)}\|w_1\|_{L^q(0,t_0)}
\eeq

Notice that  $\displaystyle \|b_1\|_{L^1}=\int_0^{+\infty}e^{-\theta s}\dfrac{|b'(s)|}{|b(0_+)|}\ud s\mathop{\longrightarrow}_{\theta\to+\infty}0$.  Pick up a $\theta>0$ large enough s.t. 

$\|b_1\|_{L^1\left( \mathbb{R}^*_+\right)}<1$.  From \eqref{pl15} we get $\|w_1\|_{L^1(0,t)} =0$.  Finally $w=0$ and $\mathcal{R}_{t_0,q}$ is an injection mapping.  

\end{proof}

%The following Lemma is an inversion lemma for $t_0=+\infty$ and $p=2$.  It will eventually imply the general lemma, i.e. for $t_0\in (0,+\infty)$ and $1\leq p \leq +\infty$.  

\begin{lemma}\label{pl2}

The Theorem \ref{il} holds true for $t_0=+\infty$ and $q=2$.

%Under above stated hypotheses $(L_1)$, $(L_2)$ and $(L_3)$, the application $\f$ is a Banach isomorphism.  Moreover, for $l\in H^1_{\mathbb{R}_+}(\mathbb{R})$, we have the following representation formula:

%\beq\label{pl21} 
%\f^{-1}(l)=\dfrac{l'}{b(0)}+B_1*l'+B_2*l
%\eeq

%with $B_{1,2}\in L^1_{\mathbb{R}}(\mathbb{R_+})$.  One may choose 

%\beq\label{pl22}
%B_1=\sum_{k=1}^{p-1}(-1)^k \dfrac{(b')^{k}}{b^{k+1}(0)}
%\eeq

%\beq\label{pl23}
%B_2=\dfrac{(-1)^p}{b^{p}(0)}\mathcal{F}^{-1}\left[ \dfrac{\left(\mathcal{F}b' \right)^p }{\mathcal{F}b}\right] 
%\eeq

\end{lemma}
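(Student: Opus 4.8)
The plan is to produce, on the Fourier side, an explicit bounded operator that is a two‑sided inverse of $\mathcal{R}=\mathcal{R}_{+\infty,2}$; most of the work lies in verifying that the two kernels $B_1,B_2$ of \eqref{il4}--\eqref{il5} genuinely belong to $L^1_{\mathbb{R}_+}(\mathbb{R})$. Once this is known, the isomorphism statement and the representation formula \eqref{il7} follow by a direct Fourier computation.

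The kernel $B_1$ is easy. By $(\text{b}_1)$ the regular part $b'$ lies in $L^1_{\mathbb{R}_+}(\mathbb{R})$, and since this space is stable under convolution (Young's inequality \eqref{iq2}, together with the elementary fact that the sum of two numbers in $[0,+\infty)$ stays in $[0,+\infty)$), each iterated convolution $(b')^{*k}$, $1\le k\le p-1$, lies in $L^1_{\mathbb{R}_+}(\mathbb{R})$; hence so does the finite combination $B_1$ defined by \eqref{il4}.

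The substantive point is $B_2$. Set $h:=\mathcal{F}^{-1}\big[(\mathcal{F}b')^{p}/\mathcal{F}b\big]$, which by $(\text{b}_3)$ is a well-defined element of $L^1(\mathbb{R})$, so that $B_2=(-1)^{p}b(0_+)^{-p}h\in L^1(\mathbb{R})$; what must be shown is $\operatorname{supp}h\subset[0,+\infty)$. Multiplying the identity $\mathcal{F}h=(\mathcal{F}b')^{p}/\mathcal{F}b$ by $\mathcal{F}b$ gives $\mathcal{F}h\cdot\mathcal{F}b=(\mathcal{F}b')^{p}=\mathcal{F}\big((b')^{*p}\big)$, i.e. $h*b=(b')^{*p}$ in $L^1(\mathbb{R})$, a function supported in $[0,+\infty)$. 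I would then identify $\mathcal{F}h$ with the boundary values of $G(z):=\big(\mathcal{L}b'(z)\big)^{p}/\mathcal{L}b(z)$ under $z=i\omega$: by $(\text{b}_2)$ the Laplace transform $\mathcal{L}b$ is continuous and non-vanishing on $\{\text{Re}\,z\ge0\}$ and holomorphic on $\{\text{Re}\,z>0\}$, so $G$ is holomorphic on $\{\text{Re}\,z>0\}$, continuous up to the imaginary axis where it coincides with $\mathcal{F}h$, and obeys $|G(z)|\le\|b'\|_{L^1}^{p}\,(1+|z|^{\beta})/M$, i.e. $G$ has at most polynomial growth. Thus $\mathcal{F}h$ is the boundary value of a holomorphic function of polynomial growth on the lower half-plane, and the Paley--Wiener--Schwartz theorem applied to $h\in L^1(\mathbb{R})\subset\mathscr{S}'(\mathbb{R})$ gives $\operatorname{supp}h\subset[0,+\infty)$. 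Since $b(0_+)\neq0$ by $(\text{b}_1)$, this proves $B_2\in L^1_{\mathbb{R}_+}(\mathbb{R})$.

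With $B_1,B_2\in L^1_{\mathbb{R}_+}(\mathbb{R})$ at hand, the conclusion follows by Fourier algebra. Injectivity of $\mathcal{R}$ is immediate: if $w\in L^2_{[0,+\infty)}(\mathbb{R})$ satisfies $b*w=0$, then $\mathcal{F}b\cdot\mathcal{F}w=0$ a.e., and since $\mathcal{F}b(\omega)=\mathcal{L}b(i\omega)\neq0$ for every real $\omega$ by $(\text{b}_2)$, one gets $\mathcal{F}w=0$, hence $w=0$. For surjectivity and the formula, fix $l\in W^{1,2}_{[0,+\infty)}(-\infty,+\infty)$; then $l'\in L^2(\mathbb{R})$ is supported in $[0,+\infty)$, so
\[
w:=\frac{l'}{b(0_+)}+B_1*l'+B_2*l
\]
belongs to $L^2_{[0,+\infty)}(\mathbb{R})$ and $\|w\|_{L^2}\le C(b)\,\|l\|_{W^{1,2}}$ by Young's inequality. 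Using $\mathcal{F}l'=i\omega\,\mathcal{F}l$ and the integration-by-parts identity $\mathcal{F}b'=i\omega\,\mathcal{F}b-b(0_+)$ (legitimate since $b\in W^{1,1}$ and $b(+\infty)=0$), a short telescoping based on the binomial expansion of $(\mathcal{F}b')^{p}=\big(i\omega\,\mathcal{F}b-b(0_+)\big)^{p}$ yields
\[
\mathcal{F}b\cdot\Big(\tfrac{i\omega}{b(0_+)}+i\omega\,\mathcal{F}B_1+\mathcal{F}B_2\Big)=1,
\]
whence $\mathcal{F}(b*w)=\mathcal{F}b\cdot\mathcal{F}w=\mathcal{F}l$ and $b*w=l$. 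Thus $\mathcal{R}$ is a continuous bijection from $L^2_{[0,+\infty)}(\mathbb{R})$ onto $W^{1,2}_{[0,+\infty)}(-\infty,+\infty)$ whose inverse is the bounded operator $l\mapsto l'/b(0_+)+B_1*l'+B_2*l$, i.e. a Banach isomorphism, and the last display is exactly \eqref{il7} for $t_0=+\infty$, $q=2$. The main obstacle is the support statement $\operatorname{supp}B_2\subset[0,+\infty)$: this is precisely where $(\text{b}_2)$ is used, via the Paley--Wiener--Schwartz argument, and it is also the reason the plain resolvent-kernel approach is unavailable here; the remaining steps are routine.
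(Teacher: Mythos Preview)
Your proof is correct and follows a genuinely different route from the paper's. The paper proceeds in three ordered steps: (1) show $\mathcal{R}$ is a Banach isomorphism by first proving a closed-range estimate $\|w\|_{L^2}\le C\|\mathcal{R}w\|_{H^1}$ via \eqref{hl7}, and then hitting the dense set $(\mathscr{C}^\infty_c)_{(0,+\infty)}(\mathbb{R})$ by constructing each preimage through the Bromwich--Mellin contour integral of $\mathcal{L}r/\mathcal{L}b$; (2) derive the representation formula by convolving the identity $w+\tfrac{b'}{b(0_+)}*w=\tfrac{l'}{b(0_+)}$ with the truncated Neumann series $\sum_{k=0}^{p-1}(-1)^k(b'/b(0_+))^{*k}$, so that the remainder $(b')^{*p}*w$ is recognised as $B_2*l$; (3) deduce $\operatorname{supp}B_2\subset\mathbb{R}_+$ \emph{a posteriori} by inserting $l=\rho\in\mathscr{D}_{\mathbb{R}_+}(\mathbb{R})$ into the formula and letting $\rho$ run through an approximate identity. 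You reverse the order: first secure $B_2\in L^1_{\mathbb{R}_+}(\mathbb{R})$ directly via a one-sided Paley--Wiener argument (using $(\text{b}_2)$ to extend $(\mathcal{L}b')^p/\mathcal{L}b$ holomorphically with polynomial growth to the closed right half-plane), then \emph{define} the candidate inverse by \eqref{il7} and verify $\mathcal{F}b\cdot\mathcal{F}w=\mathcal{F}l$ through the telescoping identity that follows from $\mathcal{F}b'=i\omega\,\mathcal{F}b-b(0_+)$. Your path is more economical---no density or closed-range step, no approximate-identity trick---but it leans on the half-line Paley--Wiener--Schwartz statement for tempered distributions, which you should cite precisely (polynomial bound on the closed half-plane plus continuity up to the boundary suffices here). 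The paper's path is more self-contained: its Bromwich--Mellin construction is effectively a hands-on proof of that same Paley--Wiener fact for the particular preimages needed, and the truncated-Neumann viewpoint makes transparent why assumption $(\text{b}_3)$ enters as a control on the remainder term. Both proofs use $(\text{b}_2)$ at exactly the same place---to keep $1/\mathcal{L}b$ holomorphic with controlled growth on $\operatorname{Re}z\ge 0$---just packaged differently.
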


\begin{proof}
The proof consists of three steps.  

{\bf Step 1.}

First we prove $\mathcal{R}$ is a Banach isomorphism.  Due to Lemma \ref{pl1}, one only needs to prove 
$\mathcal{R}$ is surjective.  To begin with, one establishes that, for any 
$w\in  L^2_{\mathbb{R}_+}(\mathbb{R})$, one has (with $M>0$ the constant in \eqref{hl7})

\beq\label{pl24}
\|w\|_{L^2\left( \mathbb{R}\right) }\leq \dfrac{1}{\sqrt{\pi}M}\|\mathcal{R}(w)\|_{H^1\left( \mathbb{R}\right)}
\eeq

Actually using Parseval's identity and \eqref{hl7} one gets

\beq\label{pl25}
\sqrt{2\pi}\|w\|_{L^2\left( \mathbb{R}\right)}= \left\|\mathcal{F}w \right\|_{L^2\left( \mathbb{R}\right)}=\left\|\dfrac{\mathcal{F}\mathcal{R}(w)}{\mathcal{F}b}   
\right\|_{L^2\left( \mathbb{R}\right)}\leq \dfrac{1}{M}\left\|(1+|\omega|)\mathcal{F}\mathcal{R}(w) \right\|_{L^2\left( \mathbb{R}\right)}
\eeq

Since $(1+|\omega|)\leq \sqrt{2(1+\omega^2)}$, inequality \eqref{pl25} implies inequality \eqref{pl24}.  Next, inequalities \eqref{pl12} and \eqref{pl24} prove that $\mathcal{R}\left(L^2_{\mathbb{R}_+}(\mathbb{R}) \right) $ is closed.  Therefore, in order to prove  that $\mathcal{R}$ is surjective it is sufficient to show that the dense subset $\left( \mathscr{C}^\infty_c\right)_{(0,+\infty)}(\mathbb{R})$ of  $H^1_{\mathbb{R}_+}(\mathbb{R})$ is included in $\mathcal{R}\left(L^2_{\mathbb{R}_+}(\mathbb{R}) \right) $.  

Let $r\in \left( \mathscr{C}^\infty_c\right)_{(0,+\infty)}(\mathbb{R})$.  We search for $w\in L^2_{\mathbb{R}_+}(\mathbb{R})$ s.t. $b*w=r$.  Since we are unable to identify the support of $w$ by Fourier transform, we use Laplace transform instead.  Consider the function

$$z\in \left\lbrace z\in\mathbb{C}/ \,\text{Re}(z)\geq0 \right\rbrace \mapsto\dfrac{\mathcal{L}r(z)}{\mathcal{L}b(z)}\in\mathbb{C}$$

which is well defined based on $(b_2)$ and the fact that $r\in \left( \mathscr{C}^\infty_c\right)_{(0,+\infty)}(\mathbb{R})$.  This function is clearly continuous on $\text{Re}(z)\geq0$ and analytic on $\text{Re}(z)>0$.  As for any $z\in\mathbb{C}$ and $\gamma\in\mathbb{N}$, $\mathcal{L}r^{(\gamma)} (z)=z^\gamma  \mathcal{L}r(z)$, and as $r^{(\gamma)}\in L^1(\mathbb{R})$, we deduce that there  exists $m_1\geq0$ s.t. 

$$\left|\mathcal{L}r(z) \right| \leq \dfrac{m_1}{1+|z|^{\beta+2}},\, \forall  z\in\mathbb{C},\,\text{Re}(z)\geq0$$

Now it easily follows the existence of $m_2\geq0$ s.t. 

\beq\label{pl26}
\left| \dfrac{\mathcal{L}r(z)}{\mathcal{L}b(z)}\right| \leq \dfrac{m_2}{1+|z|^2},\, \forall  z\in\mathbb{C},\,\text{Re}(z)\geq0
\eeq

Next, with the help of Bromwich-Mellin formula, for any $t\in\mathbb{R}$ and for fixed $x>0$,   define $w$ as

\beq\label{pl27}
w(t):=\dfrac{1}{2\pi i}\int_{\mathbb{R}}e^{t(x+iy)}\dfrac{\mathcal{L}r}{\mathcal{L}b}(x+iy)\ud y
\eeq

Owing to Cauchy's formula and invoking \eqref{pl26}, $w$ thus defined is independent of $x>0$.  
Also, for fixed $t<0$, letting $x\to+\infty$ in \eqref{pl27} leads to $w(t)=0$.  This is   $w(t)=0$ for any $t<0$.  Next, for any fixed $t\in\mathbb{R}$, using Lebesgue's Theorem we calculate the limit for $x\to0$ of \eqref{pl27} and obtain $w=\mathcal{F}^{-1}\left(\dfrac{\mathcal{F}r}{\mathcal{F}b} \right) $.  By Parseval's identity and by \eqref{pl26}, $w$ is clearly an element of $L^2_{\mathbb{R}_+}(\mathbb{R})$ and satisfies $\mathcal{R}(w)=r$.  Therefore $\mathcal{R}$ is surjective.    

{\bf Step 2.}

The task now is proving the representation formula.  Let $w\in L^2_{\mathbb{R}_+}(\mathbb{R})$ and set $l=\mathcal{R}(w)$. Derivation of the later gives

\beq\label{pl28}
w+\dfrac{b'}{b(0_+)}*w=\dfrac{l'}{b(0_+)}
\eeq 

Convolute \eqref{pl28} with the operator $\displaystyle \sum_{k=0}^{p-1}(-1)^k \left(\dfrac{b'}{b(0_+)} \right) ^{*k}*$ (by convention $\left(\dfrac{b'}{b(0_+)} \right) ^{*0}=\delta_0$).  We obtain:

\beq\label{pl29}
w=\dfrac{l'}{b(0_+)}+\left(B_1*l' \right)+\dfrac{(-1)^p}{b^p(0_+)} \left[(b')^{*p}*w \right] 
\eeq

Since $l=b*w$, we get $\mathcal{F}l=\mathcal{F}b\mathcal{F}w$.  Hence 

\beq\label{pl230}
\mathcal{F}\left[(b')^{*p}*w \right]=\left( \mathcal{F}b'\right) ^{p}\dfrac{\mathcal{F}l}{\mathcal{F}b}
\eeq

By hypothesis $(b_3)$, $\dfrac{\left( \mathcal{F}b'\right) ^{p}}{\mathcal{F}b}\in L^\infty(\mathbb{R})$, which proves that inequality \eqref{pl230} holds in $L^2(\mathbb{R})$ since $\mathcal{F}l\in L^2(\mathbb{R})$.  This fact allows to state that $\dfrac{(-b')^{*p}}{b^p(0_+)}*w=B_2*l$ with $B_2$ given by \eqref{il5}.  Now, for any $w\in L^2_{\mathbb{R}_+}(\mathbb{R})$ and $l=\mathcal{R} (w)$,  \eqref{pl29} gives the representation formula 

\beq\label{pl231}
w=\dfrac{l'}{b(0_+)}+B_1*l'+B_2*l
\eeq

{\bf Step 3.}

Let us now show that the support of $B_1$ and that of $B_2$ are included in $\mathbb{R}_+$. 

Since the support of $b'$ is in $\mathbb{R}_+$, $B_1$ also has its support in $\mathbb{R}_+$ due to formula \eqref{il4}.  Let $\rho\in \mathscr{D}_{\mathbb{R}_+}\left(\mathbb{R} \right) $ and set $w=\mathcal{R}^{-1}(\rho)$ (see \textbf{Step 1.}).  Equation \eqref{pl231} now ensures that, a.e. $t<0$,

\beq\label{pl232}
0=w(t)=\dfrac{\rho'(t)}{b(0_+)}+\left( B_1*\rho'\right) (t)+\left( B_2*\rho\right) (t)
\eeq

Since $\rho'(s)=0$ a.e. $s<0$ and since $B_1$ has support in $\mathbb{R}_+$, we get

\beq\label{pl233}
\left( B_2*\rho\right) (t)=0,\,\text{a.e.}\, t<0
\eeq

Take $\rho\geq0$, $\rho\neq0$, and set $\rho_n(t)=n\rho\left(nt \right) $, $n\in\mathbb{N}^*$, $t\in \mathbb{R}$.  We know that:

\beq\label{pl234}
\displaystyle B_2*\rho_n \xrightarrow[n\to+\infty]{L^1(\mathbb{R})}\|\rho\|_{L^1} B_2
\eeq

Taking $\rho=\rho_n$ in \eqref{pl233} and using \eqref{pl234} we obtain $B_2=0$ a.e. $t<0$.  Hence $B_2$ has support in $\mathbb{R}_+$.

\end{proof}

We are now in a position allowing to prove the previously stated Inversion Theorem \ref{il}.

\begin{proof} {\bf Proof of the Inversion Theorem \ref{il1}}

Let $q\in [1,+\infty)$ and $t_0\in \mathbb{R}^*_+ \cup \{+\infty\}$.  Define the mapping $\mathcal{S}_{t_0,q}$ by:

\begin{displaymath}
\mathcal{S}_{t_0,q}=\left\{ \begin{array}{ll}
W^{1,q}_{[0,t_0)}(-\infty,t_0) & \longrightarrow  L^{q}_{[0,t_0)}(-\infty,t_0)\\                   
l & \mapsto \dfrac{l'}{b(0_+)}+B_1*l'+B_2*l                   
\end{array}\right.
\end{displaymath}

with $B_1, B_2\in L^1_{\mathbb{R}_+}(\mathbb{R})$ given by \eqref{il4}-\eqref{il5}.  Clearly $\mathcal{S}_{t_0,q}$ is well defined and continuous.

We begin by studying the case $t_0=+\infty$.  

Notice that $\mathcal{S}_{+\infty,q}\circ \mathcal{R}_{+\infty,q}$ restricted to $D=L^q_{\mathbb{R}_+}(\mathbb{R})\cap L^2_{\mathbb{R}_+}(\mathbb{R})$ is the identity (see Lemma \ref{pl2}).  Since $D$ is dense in $ L^q_{\mathbb{R}_+}(\mathbb{R})$, and 
$\mathcal{S}_{+\infty,q}$ and $\mathcal{R}_{+\infty,q}$ are continuous, we find that $\mathcal{S}_{+\infty,q}\circ \mathcal{R}_{+\infty,q}$ is the identity on $ L^q_{\mathbb{R}_+}(\mathbb{R})$.  Similarly, $\mathcal{R}_{+\infty,q}\circ \mathcal{S}_{+\infty,q}$ is the identity on $ W^{1,q}_{\mathbb{R}_+}(\mathbb{R})$. This proves the Theorem for $t_0=+\infty$.

Assume now that $t_0>0$ and $q\in [1,+\infty]$.  We know from Lemma \ref{pl1} that 
$\mathcal{R}_{t_0,q}$ is continuous and injective.  We now prove that $\mathcal{R}_{t_0,q}$ is surjective and 
that $\mathcal{S}_{t_0,q}$ is its inverse.   Let $l\in W^{1,q}_{[0,t_0)}(-\infty,t_0)$ and extend $l$ into $L\in W^{1,q}_{[0,2t_0)}(\mathbb{R})$ by reflexion 

\begin{displaymath}
L(t)=\left\{ \begin{array}{ll}
l(t) & \,\text{for}\,  t<t_0\\                   
l(2t_0-t) & \,\text{for}\, t>t_0                 
\end{array}\right.
\end{displaymath}

Let $W=(\mathcal{S}_{+\infty,q})(L)$ and define $w\in L^{q}_{[0,t_0)}(-\infty,t_0)$ as the restriction of $W$ to $(-\infty,t_0)$.  Then, $b*w=b*W=l$ on $(-\infty,t_0)$, and:

\beq\label{il8}
w=W=\dfrac{L'}{b(0_+)}+B_1*L'+B_2*L,\,\text{on}\,(-\infty,t_0) 
\eeq

This is $w=\mathcal{S}_{t_0,q}(l)$.  This proves the Theorem.
 
\end{proof}

Notice that from hypotheses $(a_1)$, $(a_4)$, $(a_5)$ and Lemma \ref{lm1},  the above Inversion Theorem can be used with $b=a_n$.

%%%%%%%%%%%%%%%%%%%%%%%%%%%%%%%%%%%%%%%%%%%%%%%%%%%%%%%%%
%%%%%%%%%%%%%%%%%%%%%%%%%%%%%%%%%%%%%%%%%%%%%%%%%%%%%%%%%
\section{Approximated problems and estimates.}\label{appe}
%%%%%%%%%%%%%%%%%%%%%%%%%%%%%%%%%%%%%%%%%%%%%%%%%%%%%%%%%
%%%%%%%%%%%%%%%%%%%%%%%%%%%%%%%%%%%%%%%%%%%%%%%%%%%%%%%%%

%%%%%%%%%%%%%%%%%%%%%%%%%%%%%%%%%%%%%%%%%%%%%%%%%%%%%%%%
%%%%%%%%%%%%%%%%%%%%%%%%%%%%%%%%%%%%%%%%%%%%%%%%%%%%%%%%
\subsection{Approximated and local problems.  Preliminary notations and estimates.}
%%%%%%%%%%%%%%%%%%%%%%%%%%%%%%%%%%%%%%%%%%%%%%%%%%%%%%%%
%%%%%%%%%%%%%%%%%%%%%%%%%%%%%%%%%%%%%%%%%%%%%%%%%%%%%%%%

Remark that $a$ is not smooth enough to ensure a straightforward  local in time existence result for a solution $v$ to our problem.   As a consequence we study the following approximated problem which we denote by $P_n$.  

{\bf Problem ${\bf P_n}$}: find  $v_n:\Omega\times [0,+\infty)\to \mathbb{R}$ s.t.

\begin{enumerate}[$(P_n)_1$]\label{ap1}
\item $\displaystyle (v_n)_t=\int_0^{+\infty} a'_n(s)\dfrac{\partial}{\partial x}g\left(\left(\overline{v}^t_n \right)_x\right) (x,s)   \ud s +f(x,t)$ \label{ap2} 
\item $v_n=0$ on $\partial \Omega$, $v_n(t)=0,\, \forall t<0$ \label{ap3}
\item $v_n(x,0)=v_0(x)$ for $x\in \Omega$\label{ap4}
\end{enumerate}

Given the assumptions on $g$ we conclude there exist $\gamma>0$ and $\theta\in[0,1]$ s.t.

\beq\label{ap5}
g'(y)<-\gamma,\, \forall y\in [-\theta, \theta]
\eeq

Clearly we can take the same $\theta$ as in assumption $(g_1)$.  Moreover, there exists $K>0$ s.t. 

%\beq\label{ap6}
%\left|g^{(j)}(y)-g^{(j)}(0) \right|\leq K|y|,\, \forall y\in [-\theta,\theta],\, j=0,1,2,3
%\eeq

%and 

\beq\label{ap7}
\left|g'(y)-g'(0) \right|\leq Ky^2,\, \forall y\in [-\theta,\theta]
\eeq

In the above one may consider the same $K$ as in $(g_1)$.  

Let us denote, for almost every $x\in\Omega$,
$$
u_n(x,t) = \int_0^t v_n(x,s) \, ds.
   $$

The proof of the next Proposition is very similar to that of Theorem III.10 in \cite{rhn} and is omitted. 

\begin{proposition}\label{pr}
Assume that the hypotheses $(g_1)$-$(g_3)$, $(f_1)$-$(f_4)$, $(v_0)$, and $(a_1)$-$(a_5)$ on the data hold true.  
%Then there exist a number $\delta>0$ s.t., whenever $v_0$ and $f$ satisfy the supplementary smallness condition $F(f)+V_0(v_0)\leq \delta$, 
Then the initial value problem $(P_n)_1$, $(P_n)_2$, $(P_n)_3$ 
has a unique solution $v_n$ defined on a maximal time interval 
$[0,T_n)$, $T_n>0$, and s.t. 
$ v_n \in \mathscr{C}^0\left([0,T_n); H^2(\Omega) \right), \; 
(v_n)_t \in \mathscr{C}^0\left([0,T_n); H^1(\Omega) \right),
(v_n)_{tt} \in \mathscr{C}^0\left([0,T_n); L^2(\Omega) \right)$ and
$u_n \in \mathscr{C}^0\left([0,T_n); H^3(\Omega) \right).$
%
%$v_n$, $(v_n)_x$, $(v_n)_t$, $(v_n)_{xx}$, $(v_n)_{xt}$, $(v_n)_{tt}$ 
%pertain to $\mathscr{C}^0\left([0,T_n); L^2(\Omega) \right) $.  
Moreover, if 

\beq\label{pr1}
\displaystyle\mathop{\sup}_{t\in[0,T_n)}  \left\lbrace \|v_n(\cdot, t)\|^2_{H^2(\Omega)}
+ \|(v_n)_t(\cdot, t)\|^2_{H^1(\Omega)} + \|(v_n)_{tt} (\cdot, t)\|^2_{L^2(\Omega)}
+ \|u_n(\cdot, t)\|^2_{H^3(\Omega)}
%+\left[\left(v_n 
%\right)_{x}  \right]^2+\left[\left(v_n \right)_{t}\right]^2+\left[\left(v_n \right)_{xx}  
%\right]^2  +\left[\left(v_n \right)_{xt}  \right]^2+\left[\left(v_n \right)_{tt}  \right]^2 
\right\rbrace  <\infty
\eeq

and

\begin{displaymath}\label{pr01}
\sup_{\substack{x\in\Omega \\ 0\leq t\leq T_n} }\left|\left( u_n\right)_x (x,t) \right|\leq \dfrac{\theta}{2}
\end{displaymath}

with $\theta$ as in $(g_4)$, then $T_n=+\infty$.  

\end{proposition}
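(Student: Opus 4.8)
The plan is to follow the scheme of \cite[Theorem~III.10]{rhn} (see also \cite{ren}), adapting it to the smooth memory kernel $a_n$, and to add the usual continuation argument for the blow-up alternative. First I would recast Problem $P_n$ in terms of $u_n(x,t)=\int_0^t v_n(x,\tau)\,\ud\tau$: since $v_n=0$ for $t<0$, one has $\overline{v}^t_n(x,s)=u_n(x,t)-u_n(x,t-s)$ for $0\le s\le t$ and $\overline{v}^t_n(x,s)=u_n(x,t)$ for $s\ge t$, so, after the change of variable $r=t-s$ and integration of the tail (using $a_n(+\infty)=0$), $(P_n)_1$ becomes
\beq\label{uform}
(u_n)_{tt}=\dfrac{\partial}{\partial x}\left[-a_n(t)\,g\big((u_n)_x\big)+\int_0^t a_n'(t-r)\,g\big((u_n)_x(x,t)-(u_n)_x(x,r)\big)\,\ud r\right]+f(x,t),
\eeq
which is exactly $(P_n)_1$ written out (the K--BKZ form \eqref{p1}, equivalently \eqref{0i7}, with $(a,v)$ replaced by $(a_n,v_n)$). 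By $(a_1)$--$(a_2)$ we have $a_n\in\mathscr{C}^2\cap W^{2,\infty}$, while by $(a_4)$ and Lemma~\ref{lm1} we have $a_n(0_+)\ge M_1>0$ for $n$ large; since $g'(0)<0$ by $(g_3)$, the principal part of \eqref{uform} is the wave operator $(u_n)_{tt}+g'(0)a_n(0_+)(u_n)_{xx}$, of positive speed, with all memory contributions lower order and — thanks to the smoothness of $a_n$ — controlled perturbations.

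I would then set up a contraction. Fix $T>0$ and let $X_T$ be the space of $v$ with $v\in\mathscr{C}^0([0,T];H^2(\Omega))$, $v_t\in\mathscr{C}^0([0,T];H^1(\Omega))$, $v_{tt}\in\mathscr{C}^0([0,T];L^2(\Omega))$ and $\int_0^{\cdot}v\in\mathscr{C}^0([0,T];H^3(\Omega))$, with the natural norm. For $w$ in a fixed ball of $X_T$, freeze the nonlinearity and solve the \emph{linear} equation obtained from \eqref{p5}--\eqref{p6} with $a$ replaced by $a_n$ and $\mathcal G$ by $\mathcal G_n[w]$, namely $v_t+g'(0)\,a_n*v_{xx}=f+\mathcal G_n[w]$, together with $(P_n)_2$--$(P_n)_3$. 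Classical hyperbolic energy estimates (differentiating in $t$ up to the required order, testing against the corresponding time derivative of $v$, and absorbing the memory terms using $a_n(0_+)>0$, $a_n'\le 0$ and $a_n''\in L^\infty(0,T)$) bound $v=\Phi(w)$ in $X_T$ by a quantity depending only on the ball, on $V_0(v_0)$, $F(f)$ and on $\|a_n\|_{W^{2,\infty}(0,T)}$; the compatibility relation \eqref{nbce1} together with $v_0\in H^2(\Omega)$ makes $v_{tt}(\cdot,0)\in L^2(\Omega)$ well defined, so the data are admissible at the required level. A parallel estimate for $\Phi(w_1)-\Phi(w_2)$, using the Lipschitz control of the relevant derivatives of $g$ provided by $(g_1)$, shows that $\Phi$ is a contraction on $[0,T]$ once $T$ is small, $T$ being quantified by the size of the data. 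This yields a unique solution $v_n$ on a maximal interval $[0,T_n)$ with the stated regularity; uniqueness on $[0,T_n)$ follows from the same difference estimate and Gronwall's lemma.

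For the blow-up criterion I would argue by contradiction: assume $T_n<+\infty$ while \eqref{pr1} holds and $\sup_{x\in\Omega,\,0\le t<T_n}|(u_n)_x(x,t)|\le\theta/2$. For $0\le s\le t<T_n$ one has $(\overline{v}^t_n)_x(x,s)=(u_n)_x(x,t)-(u_n)_x(x,t-s)$, hence $|(\overline{v}^t_n)_x|\le\theta$, so by \eqref{ap5} the argument of $g'$ stays where $g'<-\gamma$: the equation is \emph{uniformly} hyperbolic on $[0,T_n)$, and \eqref{ap7} controls $g'(\cdot)-g'(0)$. Together with the uniform bound \eqref{pr1}, this means that at every $t_0<T_n$ the Cauchy data $(v_n(\cdot,t_0),(v_n)_t(\cdot,t_0),(v_n)_{tt}(\cdot,t_0))$ lie in a fixed bounded subset of $H^2(\Omega)\times H^1(\Omega)\times L^2(\Omega)$ and the coefficients of the time-shifted equation satisfy fixed bounds; restarting the local construction from $t_0$ then yields a solution on $[t_0,t_0+\tau]$ with $\tau>0$ \emph{independent of} $t_0$. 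Choosing $t_0>T_n-\tau$ extends $v_n$ beyond $T_n$, contradicting maximality, so $T_n=+\infty$.

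The step I expect to be the main obstacle is the linear theory at the required regularity: the energy estimates must be arranged so as to close in $u_n\in H^3(\Omega)$ and $(v_n)_{tt}\in L^2(\Omega)$ for the \emph{nonlocal} operator $v\mapsto a_n*v_{xx}$ — this is where the boundary terms and the compatibility conditions at $t=0$ must be treated with care — and one must ensure that the time of existence $\tau$ in the restart step depends only on the bounds in \eqref{pr1} and on the uniform lower bound $\gamma$ for $-g'$, which is exactly what the hypothesis $\sup|(u_n)_x|\le\theta/2$ provides by keeping the argument of $g'$ away from the boundary of the hyperbolicity interval. Since this is a routine adaptation of \cite[Theorem~III.10]{rhn}, only the reformulation \eqref{uform} really needs to be written down and the remaining details are omitted.
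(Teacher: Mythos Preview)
Your proposal is correct and follows exactly the approach the paper takes: the paper's own proof consists of the single sentence ``The proof of the next Proposition is very similar to that of Theorem~III.10 in \cite{rhn} and is omitted,'' and you have faithfully sketched that very argument (reformulation in the $u_n$ variable, contraction in the natural $X_T$ space using the smoothness of $a_n$ and the hyperbolicity $g'(0)a_n(0_+)<0$, followed by the standard restart/continuation argument under the uniform bounds \eqref{pr1} and $|(u_n)_x|\le\theta/2$). There is nothing to add.
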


Notice that our functional framework is different from that of \cite{bh1}.  As a consequence, here it is necessary to obtain new estimates on $\|u_n\|_{H^3(\Omega)}$.  

In this Section we obtain the necessary estimates to proving $T_n=+\infty$.  These estimates will be proved to be independent of $n$, fact which allows to pass to the limit as $n\to+\infty$.  To simplify notations, we drop the subscript $n$ of $a_n$, $v_n$ and $T_n$.  

Drawing inspiration from \cite{bh1}, we introduce the following expressions:

\begin{align}\label{pr2}
\mathcal{E}(t) & = \displaystyle\mathop{\sup}_{s\in[0,t)}\left[ \int_\Omega 
\left(v^2+v^2_{x}+v^2_{t}+v^2_{xx}+v^2_{xt}+v^2_{tt} 
+ u^2 + u_x^2 + u_{xx}^2 + u_{xxx}^2\right)(x,s)\ud x \right] \nonumber\\
& + \int_0^t \int_\Omega \left(v^2+v^2_{x}+v^2_{t}+v^2_{xx}+v^2_{xt}+v^2_{tt} \right)(x,s)\ud x\ud s
\end{align}

and 

\beq\label{pr3}
\nu(t)=\sup_{\substack{x\in\Omega \\ s\in[0,t]} }\left[ \sqrt{\left( v^2+v^2_x+v^2_t\right) (x,s)}\right] +\sqrt{\int_0^t \displaystyle\mathop{\sup}_{x\in\Omega}\left(v_x(x,s) \right)^2\ud s }
\eeq

For simplicity let us denote
\begin{align}\label{imeq1}
\mathcal{E}_1(t) = & \displaystyle\mathop{\sup}_{s\in[0,t)}\left[ \int_\Omega 
\left(v^2+v^2_{x}+v^2_{t}+v^2_{xt}+v^2_{tt} \right)(x,s)\ud x\right]  + \nonumber\\
& \int_0^t \int_\Omega \left(v^2+v^2_{x}+v^2_{t}+v^2_{xt} \right)(x,s)\ud x\ud s
\end{align}
In fact $\mathcal{E}_1(t)$ collects the terms of $\mathcal{E}(t)$ which will be estimated
in a first step with the help of energy estimates.

Remark that, due to Sobolev inequalities, there exists a constant $C_\Omega>0$ s.t.

\beq\label{pr4}
\nu(t)\leq C_\Omega \sqrt{\mathcal{E}(t)},\, \forall t\in [0,T) 
\eeq

and 

\beq\label{pr41}
\displaystyle\mathop{\sup}_{x\in \Omega}\left|u_x(x,t) \right|\leq C_\Omega \sqrt{\mathcal{E}(t)},\, \forall t\in [0,T)
\eeq

Next, from \eqref{p6} we get

\begin{align}\label{pr5}
\mathcal{G}_t(x,t) & = v_{xx}(x,t)\int_0^{+\infty}a'(s) \left[g'\left(\overline{v}_x^t(x,s) \right) -g'(0) \right]\ud s \nonumber\\
& -\int_0^t v_{xx}(x,s)a'(t-s)\left[g'\left(\overline{v}_x^t(x,t-s) \right) -g'(0) \right]\ud s \nonumber\\
& + \int_0^t v_{xx}(x,s)\int_{t-s}^{+\infty}a'(\tau)g''\left(\overline{v}_x^t(x,\tau) \right)\left[v_x(x,t)-v_x(x,t-\tau)  \right]\ud\tau\ud s \nonumber\\
\end{align}

All subsequent estimates will be obtained under the following smallness hypothesis on $\mathcal{E}(t)$: 

\beq\label{pr6}
\mathcal{E}(t) \leq \dfrac{\theta^2}{4 C^2_\Omega},\, \forall t\in[0,T)
\eeq

which implies 

\beq\label{pr61}
\sup_{\substack{x\in\Omega \\ 0\leq t \leq T} } \left|u_x(x,t) \right|\leq \dfrac{\theta}{2}
\eeq

Then

\beq\label{pr62}
\sup_{\substack{x\in\Omega \\ 0\leq s\leq t \leq T} }\left| \overline{v}_x^t(x,s) \right|\leq \theta,\,\text{a.e.}\, x\in\Omega
\eeq

Let $r_0:\mathbb{R}_+\to \mathbb{R}_+$, $r_0(s):=\displaystyle \min \left\lbrace s,\sqrt{s}\right\rbrace $.  We have the following estimates:

\begin{lemma}\label{le}
Let $t\in[0,T)$, assume \eqref{pr6} is satisfied. Then:

\begin{enumerate}[(i)]
\item $\displaystyle \left| g^{(j)}\left(\overline{v}_x^t(x,s) \right)-g^{(j)}(0) \right| \leq K \min \left\lbrace \nu(t)r_0(s),\theta \right\rbrace \,\text{a.e.}\, x\in\Omega,\, s\in[0,t],\,j=0,1,2,3$ \label{le1} 
\item \label{le2} $\left|\mathcal{G}(x,t) \right|\leq K \nu(t) \left[\left|v_{xx}(x,\cdot)\right|*\psi \right](t),\,\text{a.e.}\, x\in\Omega$  
\item \label{le3} $\left|\mathcal{G}_t(x,t) \right|\leq K \nu(t)\overline{a}\left|v_{xx}(x,t)\right|+K \nu(t) \left[\left|v_{xx}(x,\cdot)\right|*\psi \right](t),\,\text{a.e.}\, x\in\Omega  $, 
\end{enumerate}
where
\beq\label{le4}
\overline{a}=\int_0^{+\infty}\left|a'(s)\right|r_0(s)\ud s 
\eeq
\beq\label{le5}
\psi(t)=\left|a'(t)\right|r_0(t)+2\int_t^{+\infty}\left|a'(\tau)\right|r_0(\tau)\ud\tau
\eeq
\end{lemma}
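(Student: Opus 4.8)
\textbf{Proof plan for Lemma \ref{le}.}

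The plan is to exploit the Taylor-type hypotheses on $g$ together with the confinement estimate \eqref{pr62}, then feed the resulting pointwise bounds on $g^{(j)}(\bar v^t_x)-g^{(j)}(0)$ into the convolution representations \eqref{p6} and \eqref{pr5}. For part \eqref{le1}, I would argue as follows. Under \eqref{pr6}, estimate \eqref{pr62} gives $|\bar v^t_x(x,s)|\le\theta$, so $g^{(j)}$ is evaluated only where it is controlled by $(g_1)$ and \eqref{ap7}. First, the crude bound: from $(g_1)$ and its consequences (an iterated integration of $|g^{(3)}(y)-g^{(3)}(0)|\le K|y|$, together with $g(0)=g''(0)=0$), one gets $|g^{(j)}(y)-g^{(j)}(0)|\le CK$ for $|y|\le\theta\le1$; absorbing constants and using $\theta\le1$ this yields $|g^{(j)}(\bar v^t_x)-g^{(j)}(0)|\le K\theta$. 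Second, the sharp small-argument bound: I would show $|g^{(j)}(y)-g^{(j)}(0)|\le K|y|$ for $|y|\le\theta$ (again by integrating the Lipschitz bound on $g^{(3)}$ and using $g''(0)=0$, $g(0)=0$ to kill the lower-order terms — note $g'(0)$ does not enter the $j=1$ difference), and then bound $|\bar v^t_x(x,s)|=\bigl|\int_{t-s}^t v_x(x,\tau)\,d\tau\bigr|$ by both $s\cdot\sup|v_x|\le s\,\nu(t)$ and by $\sqrt{s}\,\bigl(\int_0^t\sup_x v_x^2\bigr)^{1/2}\le\sqrt{s}\,\nu(t)$ via Cauchy–Schwarz, hence $|\bar v^t_x(x,s)|\le\nu(t)\,r_0(s)$. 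Taking the minimum of the two bounds gives \eqref{le1}.

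For part \eqref{le2}, I would start from the second line of \eqref{p6}, namely $\mathcal{G}(x,t)=\int_0^t v_{xx}(x,s)\int_{t-s}^{+\infty}a'(\tau)[g'(\bar v^t_x(x,\tau))-g'(0)]\,d\tau\,ds$. Bounding the inner $\tau$-integral by part \eqref{le1} with $j=1$ gives $\bigl|\int_{t-s}^{+\infty}a'(\tau)[\cdots]\,d\tau\bigr|\le K\nu(t)\int_{t-s}^{+\infty}|a'(\tau)|\,r_0(\tau)\,d\tau$, which is $\le K\nu(t)\,\tfrac12\psi(t-s)$ by the definition \eqref{le5}. Therefore $|\mathcal{G}(x,t)|\le K\nu(t)\int_0^t|v_{xx}(x,s)|\,\psi(t-s)\,ds=K\nu(t)\bigl[|v_{xx}(x,\cdot)|*\psi\bigr](t)$, as claimed. (The mild discrepancy between $\tfrac12\psi$ and $\psi$ only improves the constant, which is harmless.)

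For part \eqref{le3}, I would treat the three lines of \eqref{pr5} separately. The first line contributes $|v_{xx}(x,t)|\cdot\bigl|\int_0^{+\infty}a'(s)[g'(\bar v^t_x(x,s))-g'(0)]\,ds\bigr|\le K\nu(t)\,\overline a\,|v_{xx}(x,t)|$ by \eqref{le1} ($j=1$) and \eqref{le4}; this is the $\overline a$-term. The second line is $\int_0^t v_{xx}(x,s)\,a'(t-s)[g'(\bar v^t_x(x,t-s))-g'(0)]\,ds$, bounded by $K\nu(t)\int_0^t|v_{xx}(x,s)||a'(t-s)|r_0(t-s)\,ds$, i.e.\ $K\nu(t)\bigl[|v_{xx}(x,\cdot)|*(|a'|r_0)\bigr](t)$; since $|a'(t)|r_0(t)\le\psi(t)$ this is absorbed into the convolution-with-$\psi$ term. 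The third line involves $g''(\bar v^t_x)[v_x(x,t)-v_x(x,t-\tau)]$; here I bound $|g''(\bar v^t_x(x,\tau))|\le|g''(\bar v^t_x(x,\tau))-g''(0)|+|g''(0)|$, but $g''(0)=0$ by $(g_2)$, so $|g''(\bar v^t_x(x,\tau))|\le K\nu(t)r_0(\tau)$ by \eqref{le1} ($j=2$), and I bound $|v_x(x,t)-v_x(x,t-\tau)|\le 2\sup_{x,s}|v_x(x,s)|\le 2\nu(t)$. Wait — that would produce a factor $\nu(t)^2$; to match the claimed linear-in-$\nu$ bound I instead keep one factor of $\sup|v_x|$ inside $\nu(t)$ and recognize the remaining $K\,r_0(\tau)\,|v_x(x,t)-v_x(x,t-\tau)|$ as contributing, after integration in $\tau$ and then $s$, a convolution $\bigl[|v_{xx}(x,\cdot)|*\tilde\psi\bigr](t)$ with $\tilde\psi(\tau)\le 2\int_\tau^{+\infty}|a'(\sigma)|r_0(\sigma)\,d\sigma\le\psi(\tau)$; the leftover $\sup|v_x|$ is $\le\nu(t)$. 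Collecting the three contributions gives \eqref{le3}.

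\emph{Main obstacle.} The delicate point is part \eqref{le3}, line three: one must be careful not to lose a power of $\nu(t)$ and must correctly route the difference quotient $v_x(x,t)-v_x(x,t-\tau)$ and the weight $r_0(\tau)$ into the convolution kernel $\psi$ — in particular verifying that $2\int_\tau^{+\infty}|a'|r_0$ really dominates all the tail contributions so that a single kernel $\psi$ suffices for all three lines. One also needs that $\overline a$ and $\psi$ are finite: $\overline a<\infty$ follows from $(a_3)$ since $r_0(s)=\min\{s,\sqrt s\}$ and $\int_0^1 s|a'|+\int_1^\infty\sqrt s|a'|<\infty$ (using $a'\in L^1$ and the integrability from $(a_3)$, or more directly $t^2|a'|\in L^1(1,\infty)$), and $\psi\in L^1$ follows from Lemma \ref{adl1} applied to $\varphi=|a'|r_0$. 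These finiteness checks are routine given $(a_1)$, $(a_3)$ and Lemma \ref{adl1}, so the real work is the bookkeeping in \eqref{le3}.
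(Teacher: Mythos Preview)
Your treatment of parts \eqref{le1} and \eqref{le2} is correct and matches the paper's argument essentially line for line.

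For part \eqref{le3}, your handling of the first two lines of \eqref{pr5} is fine and again matches the paper. The confusion is in the third line. Your instinct that the crude bound $|v_x(x,t)-v_x(x,t-\tau)|\le 2\nu(t)$ produces a factor $\nu(t)^2$ is correct, and your attempted workaround --- ``routing'' the difference $v_x(x,t)-v_x(x,t-\tau)$ into the convolution kernel --- does not work: the convolution variable is $s$ (via $t-s$), while this difference depends on $\tau$, so there is no way to absorb it into a kernel $\tilde\psi(t-s)$ acting on $|v_{xx}(x,s)|$.

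The resolution is much simpler and is what the paper does: under \eqref{pr6} one has $\nu(t)\le C_\Omega\sqrt{\mathcal{E}(t)}\le\theta/2\le 1$ by \eqref{pr4}, so the factor $\nu(t)^2$ satisfies $\nu(t)^2\le\theta\,\nu(t)\le\nu(t)$. Concretely, bound $|g''(\bar v^t_x(x,\tau))|\le K\nu(t)r_0(\tau)$ from part \eqref{le1} and $|v_x(x,t)-v_x(x,t-\tau)|\le 2\nu(t)$; the product is $\le 2K\nu(t)^2 r_0(\tau)\le 2K\theta\,\nu(t)\,r_0(\tau)$, and since $\theta\le 1$ this is $\le 2K\nu(t)r_0(\tau)$. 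After integrating in $\tau$ over $[t-s,+\infty)$ this yields exactly the tail part $2\int_{t-s}^{+\infty}|a'(\tau)|r_0(\tau)\,d\tau$ of $\psi(t-s)$, which combines with the second line's contribution $|a'(t-s)|r_0(t-s)$ to give the single kernel $\psi$. So there is no ``delicate bookkeeping'' needed here: just accept the $\nu^2$ and invoke the smallness hypothesis \eqref{pr6}.
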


\begin{remark}\label{rk1}
Lemma \ref{adl1} and the assumptions made about function $a$ grant the fact that $\psi$ in \eqref{le5} is s.t. $\psi\in L^1\left(\mathbb{R}_+ \right) $.
\end{remark}

\begin{proof}

(i)  On one hand, as a consequence of $(g_1)$ and \eqref{pr62} we have

\beq\label{le6}
\displaystyle \left| g^{(j)}\left(\overline{v}_x^t(x,s) \right)-g^{(j)}(0) \right|\leq K \left|\overline{v}_x^t(x,s)\right|,\, j=0,1,2,3
\eeq

On the other hand, 

\beq \label{le7}
\left|\overline{v}_x^t(x,s) \right| \leq \int_{t-s}^{t}\left| v_x(x,\lambda) \right|\ud\lambda \leq s \displaystyle \mathop{\sup}_{t-s\leq \lambda\leq t}\left| v_x(x,\lambda) \right|\leq s \nu(t)
\eeq

and

\beq\label{le8}
\left|\overline{v}_x^t(x,s) \right|\leq \sqrt{s} \left[\int_{t-s}^{t}\left|v_x(x,\lambda)  \right|^2\ud\lambda \right]^{1/2} \leq \sqrt{s} \nu(t)
\eeq

which gives the result.

(ii)  From \eqref{p6} and \eqref{le1} above one gets:

\begin{align}\label{le9}
\left|\mathcal{G}(x,t)\right| & \leq K \nu(t) \int_0^t \left|v_{xx}(x,s)\right|\int_{t-s}^{+\infty}a'(\tau)\min \{\tau,\sqrt{\tau}\}\ud\tau\ud s \nonumber\\
& \leq K \nu(t) \int_0^t\left|v_{xx}(x,s)\right|\psi(t-s)\ud s
\end{align}

from which the result follows.

(iii) We use \eqref{pr5}, $(g_1)$, \eqref{le1}, the fact that $g''(0)=0$ and $0\leq\theta\leq1$ to obtain:

\begin{align}\label{le10}
\left|\mathcal{G}_t(x,t)\right| & \leq K \left|v_{xx}(x,t)\right| \nu(t) 
\int_0^{+\infty}|a'(s)|r_0(s)\ud s  \nonumber\\
& + K \nu(t) \int_0^t \left|v_{xx}(x,s)\right|\left|a'(t-s)\right|r_0 (t-s) \ud s\nonumber\\
& +2K \theta \nu(t) \int_0^t \left|v_{xx}(x,s)\right| \int_{t-s}^{+\infty}|a'(\tau)|r_0(\tau)\ud\tau \ud s
\end{align}

which gives the result.

%%%%%%%%%%%%%%%%%%%%%%%%%%%%%%%%%%%%%%%%
%%%%%%%%%%%%%%%%%%%%%%%%%%%%%%%%%%%%%%%%%%
\subsection{Energy estimates.}
%%%%%%%%%%%%%%%%%%%%%%%%%%%%%%%%%%%%%%%%%%
%%%%%%%%%%%%%%%%%%%%%%%%%%%%%%%%%%%%%%%%%%

\end{proof}

The next Lemmas give energy estimates for the terms in $\mathcal{E}_1(t)$ (see \eqref{imeq1}), as in \cite{bh1}.

%$\displaystyle \int_{\Omega} \left(v^2+v_t^2+v_{tt}^2 \right) \ud x$ as in \cite{bh1}.

In what follows, the notation $C>0$ stands for a generic constant that is independent of $n$.  

\begin{lemma}\label{eel}
Assume the inequality \eqref{pr6} holds true.  Then
\beq\label{eel1} 
\int_{\Omega}v^2(x,t)\ud x-2g'(0)Q\left(v_x,t,a \right) \leq V_0+2\sqrt{F}\sqrt{\mathcal{E}(t)}+2K \|\psi\|_{L^1(\mathbb{R}_+)}\nu(t)\mathcal{E}(t)
\eeq
\end{lemma}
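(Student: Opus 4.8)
\textbf{Proof proposal for Lemma \ref{eel}.}

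The plan is to run the standard zeroth-order energy estimate: multiply the evolution equation \eqref{p5} (equivalently \eqref{expl2}) by $v$ and integrate over $Q_t = \Omega\times(0,t)$. Concretely, from $v_t + g'(0)\,a*v_{xx} = f + \mathcal{G}$ I would write
\beq\label{prop1}
\int_0^t\int_\Omega v\,v_t\,\ud x\,\ud s + g'(0)\int_0^t\int_\Omega v\,(a*v_{xx})\,\ud x\,\ud s = \int_0^t\int_\Omega v\,(f+\mathcal{G})\,\ud x\,\ud s.
\eeq
The first term integrates in time to $\tfrac12\int_\Omega v^2(x,t)\,\ud x - \tfrac12\int_\Omega v_0^2(x)\,\ud x$, so it contributes the $\int_\Omega v^2(x,t)\,\ud x$ on the left and (after bounding $\|v_0\|_{L^2}^2 \le V_0$) the $V_0$ on the right. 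For the convolution term I would integrate by parts in $x$, using the boundary condition $v=0$ on $\partial\Omega$ (from $(P_n)_2$/\eqref{p3}) to kill the boundary contribution, turning $\int_\Omega v\,(a*v_{xx})\,\ud x$ into $-\int_\Omega v_x\,(a*v_x)\,\ud x$; hence
\beq\label{prop2}
g'(0)\int_0^t\int_\Omega v\,(a*v_{xx})\,\ud x\,\ud s = -g'(0)\,Q(v_x,t,a),
\eeq
by the definition \eqref{qdf} of $Q$. Multiplying \eqref{prop1} by $2$ then yields exactly the left-hand side $\int_\Omega v^2(x,t)\,\ud x - 2g'(0)Q(v_x,t,a)$ (modulo the $V_0$ term moved to the right).

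It remains to bound $2\int_0^t\int_\Omega v(f+\mathcal{G})\,\ud x\,\ud s$ by the three right-hand-side terms. For the $f$-part I would use Cauchy--Schwarz in $L^2(Q_t)$: $2\int_0^t\int_\Omega vf \le 2\|v\|_{L^2(Q_t)}\|f\|_{L^2(Q_t)} \le 2\sqrt{\mathcal{E}(t)}\sqrt{F}$, since $\|v\|_{L^2(Q_t)}^2 \le \mathcal{E}(t)$ by \eqref{pr2} and $\|f\|_{L^2(Q_t)}^2 \le F$ by the definition \eqref{msf} of $F(f)$. For the $\mathcal{G}$-part I would invoke Lemma \ref{le}(ii), which under the smallness hypothesis \eqref{pr6} gives the pointwise bound $|\mathcal{G}(x,t)| \le K\nu(t)\big[|v_{xx}(x,\cdot)|*\psi\big](t)$; then
\beq\label{prop3}
2\int_0^t\int_\Omega |v|\,|\mathcal{G}|\,\ud x\,\ud s \le 2K\nu(t)\int_0^t\int_\Omega |v|\,\big(|v_{xx}|*\psi\big)\,\ud x\,\ud s \le 2K\nu(t)\,\|\psi\|_{L^1(\mathbb{R}_+)}\,\|v\|_{L^2(Q_t)}\,\|v_{xx}\|_{L^2(Q_t)},
\eeq
using Lemma \ref{adl2}(i) (Young's convolution inequality in the $L^2(Q_t)$ pairing) with $\varphi = \psi \in L^1(\mathbb{R}_+)$ (Remark \ref{rk1}). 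Finally $\|v\|_{L^2(Q_t)}\|v_{xx}\|_{L^2(Q_t)} \le \mathcal{E}(t)$ again by \eqref{pr2}, giving the last term $2K\|\psi\|_{L^1(\mathbb{R}_+)}\nu(t)\mathcal{E}(t)$.

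I do not expect a serious obstacle here: this is the lowest-order estimate and every ingredient — the integration by parts in $x$, Cauchy--Schwarz, the convolution inequality of Lemma \ref{adl2}, and the $\mathcal{G}$-bound of Lemma \ref{le} — is already available. The only point requiring a little care is a regularization/approximation argument to justify the integration by parts and the time integration of $\int v v_t$: a priori $v = v_n$ has the regularity from Proposition \ref{pr} ($v_n\in\mathscr{C}^0([0,T_n);H^2)$, $(v_n)_t\in\mathscr{C}^0([0,T_n);H^1)$), which is amply enough to make all manipulations rigorous, but one should note that $\mathcal{G}$ involves $v_{xx}$, so the estimate is genuinely an a priori one valid on $[0,T_n)$ under \eqref{pr6}. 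One also uses that the $f$ and $\mathcal{G}$ contributions enter with the harmless sign after applying $|\cdot|$, so no delicate cancellation is needed.
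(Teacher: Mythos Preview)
Your proposal is correct and follows essentially the same approach as the paper's proof: multiply \eqref{p5} by $v$, integrate over $Q_t$, integrate by parts in $x$ to produce $-g'(0)Q(v_x,t,a)$, bound the $f$-term by Cauchy--Schwarz, and bound the $\mathcal{G}$-term via Lemma~\ref{le}(ii) together with Lemma~\ref{adl2}(i). The only additions you make (spelling out the integration by parts in $x$ and the regularity justification) are implicit in the paper and cause no difficulty.
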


\begin{proof}
For a fixed $t\in (0,T_0)$, we multiply \eqref{p5} by $v(x,t)$ and integrate on $\Omega$ and on $(0,t)$.  We get

\begin{align}\label{eel2}
& \dfrac{1}{2}\int_{\Omega}v^2(x,t)\ud x- \dfrac{1}{2}\int_{\Omega} v_0^2\ud x-g'(0) Q(v_x,t,a) \nonumber\\
& = \int_0^t \int_{\Omega} f(x,s) v(x,s)\ud x \ud s + \int_0^t \int_{\Omega} \mathcal{G}(x,s) v(x,s) \ud x \ud s
\end{align}

Observe that $\displaystyle\int_0^t \int_{\Omega} fv\ud x \ud s \leq \|f\|_{L^2(Q_t)}\|v\|_{L^2(Q_t)}\leq \sqrt{F}\sqrt{\mathcal{E}}$. 

Now, using Lemma \ref{le} we get 

$$\displaystyle\left|\int_0^t \int_{\Omega} \mathcal{G}(x,s)v(x,s)\ud x\ud s\right| \leq K \nu(t)\int_0^t \int_{\Omega} \left| v(x,s) \right|\left(\left| v_{xx} \right|*|\psi| \right)(x,s) \ud x\ud s$$

Using part (i) of Lemma \ref{adl2} with $w_1=v$, $w_2=v_{xx}$ and $\f=|\psi|$ one gets 

$$\displaystyle\left|\int_0^t \int_{\Omega} \mathcal{G}(x,s)v(x,s)\ud x\ud s\right| \leq K \nu(t) \|\psi\|_{L^1(\mathbb{R}_+)} \mathcal{E}(t),$$ 

thus ending the proof.
\end{proof}

\begin{lemma}\label{eel3}
Let $\overline{a}$ and $\psi$ be given by \eqref{le4} and \eqref{le5}, respectively.  Under the assumption that \eqref{pr6} is fulfilled , one has the following inequality:

\begin{align}\label{eel4}
\int_\Omega v_t^2(x,t)\ud x -2g'(0) Q\left( v_{xt},t,a\right) & \leq F + 2\|a\|_{L^1(\mathbb{R}_+)}\sqrt{V_0}\sqrt{\mathcal{E}(t)} \nonumber\\
& + 2\sqrt{F}\sqrt{\mathcal{E}(t)} +2K \left( \|\psi\|_{L^1(\mathbb{R}_+)} +\overline{a}\right) \nu(t)\mathcal{E}(t)
\end{align}
\end{lemma}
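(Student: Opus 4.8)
\textbf{Proof plan for Lemma \ref{eel3}.}

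The strategy mirrors the proof of Lemma \ref{eel}, but now working with the time-differentiated equation. First I would differentiate \eqref{p5} with respect to $t$, obtaining an identity of the form
\beq\label{plan1}
v_{tt}+g'(0)\left[a(t)v_{xx}(x,0)+\left(a*v_{xxt}\right)(x,t)\right]=f_t+\mathcal{G}_t,
\eeq
where the term $a(t)v_{xx}(x,0)$ arises from differentiating the convolution $a*v_{xx}$ (recall $(a*v_{xx})_t = a(0)v_{xx}(x,t)$ is \emph{not} what appears; rather one uses $\partial_t(a*v_{xx}) = a*(v_{xx})_t + a(t)v_{xx}(x,0)$ since $a\in W^{1,1}$ with no jump needed — more precisely $\partial_t \int_0^t a(t-s)v_{xx}(x,s)\,ds = a(t)v_{xx}(x,0) + \int_0^t a(t-s)v_{xxt}(x,s)\,ds$). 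Then I would multiply \eqref{plan1} by $v_t(x,t)$, integrate over $Q_t=\Omega\times(0,t)$, and use the definition \eqref{qdf} of $Q$ to recognize the convolution term as $g'(0)Q(v_{xt},t,a)$ after an integration by parts in $x$ (the boundary terms vanish since $v_t=0$ on $\partial\Omega$, because $v=0$ there for all $t\geq0$).

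This produces the identity
\beq\label{plan2}
\frac{1}{2}\int_\Omega v_t^2(x,t)\,dx - \frac{1}{2}\int_\Omega v_t^2(x,0)\,dx - g'(0)Q(v_{xt},t,a) = -g'(0)\int_0^t\!\!\int_\Omega a(s)v_{xx}(x,0)v_{xt}(x,s)\,dx\,ds + \int_0^t\!\!\int_\Omega f_t v_t\,dx\,ds + \int_0^t\!\!\int_\Omega \mathcal{G}_t v_t\,dx\,ds.
\eeq
Now each term on the right must be bounded. For $\int_\Omega v_t^2(x,0)\,dx$: evaluating the original equation \eqref{p4} at $t=0$ gives $v_t(x,0)=f(x,0)$ (the integral term vanishes at $t=0$), so this is $\|f(\cdot,0)\|_{L^2}^2 \leq F$. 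For the term involving $v_{xx}(x,0)$: after an integration by parts in $x$ moving the $x$-derivative off $v_{xt}$ onto $a(s)v_{xx}(x,0)$ — wait, more directly, one writes $v_{xx}(x,0)=v_0''(x)$ and applies part (i) of Lemma \ref{adl2} (with the roles adjusted) or simply Cauchy--Schwarz plus \eqref{iq2}: $\left|\int_0^t\!\int_\Omega a(s)v_0''(x)v_{xt}(x,s)\,dx\,ds\right| \leq \|a\|_{L^1(\mathbb{R}_+)}\|v_0''\|_{L^2(\Omega)}\|v_{xt}\|_{L^2(Q_t)} \leq \|a\|_{L^1}\sqrt{V_0}\sqrt{\mathcal{E}(t)}$, which (with the factor $|g'(0)|$ absorbed or, as the statement suggests, simply tracked — note the stated bound has no $|g'(0)|$, so presumably $|g'(0)|\leq 1$ is assumed or absorbed in constants; I would keep the clean form matching the statement) yields the term $2\|a\|_{L^1(\mathbb{R}_+)}\sqrt{V_0}\sqrt{\mathcal{E}(t)}$ after multiplying \eqref{plan2} by $2$. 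For $\int_0^t\!\int_\Omega f_t v_t$: by Cauchy--Schwarz, $\leq \|f_t\|_{L^2(Q_t)}\|v_t\|_{L^2(Q_t)} \leq \sqrt{F}\sqrt{\mathcal{E}(t)}$.

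The main work is the term $\int_0^t\!\int_\Omega \mathcal{G}_t v_t\,dx\,ds$. Here I would invoke part \eqref{le3} of Lemma \ref{le}, which gives $|\mathcal{G}_t(x,s)| \leq K\nu(s)\overline{a}\,|v_{xx}(x,s)| + K\nu(s)\left[|v_{xx}(x,\cdot)|*\psi\right](s)$. Since $\nu$ is nondecreasing, $\nu(s)\leq\nu(t)$, so
\beq\label{plan3}
\left|\int_0^t\!\!\int_\Omega \mathcal{G}_t v_t\,dx\,ds\right| \leq K\nu(t)\overline{a}\int_0^t\!\!\int_\Omega |v_{xx}||v_t|\,dx\,ds + K\nu(t)\int_0^t\!\!\int_\Omega |v_t|\left(|v_{xx}|*\psi\right)\,dx\,ds.
\eeq
The first integral is bounded by $\|v_{xx}\|_{L^2(Q_t)}\|v_t\|_{L^2(Q_t)}\leq\mathcal{E}(t)$; the second by part (i) of Lemma \ref{adl2} with $w_1=v_t$, $w_2=v_{xx}$, $\varphi=\psi$, giving $\|\psi\|_{L^1(\mathbb{R}_+)}\|v_t\|_{L^2(Q_t)}\|v_{xx}\|_{L^2(Q_t)}\leq\|\psi\|_{L^1}\mathcal{E}(t)$. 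Combining, the $\mathcal{G}_t$ contribution is at most $K(\overline{a}+\|\psi\|_{L^1(\mathbb{R}_+)})\nu(t)\mathcal{E}(t)$. Multiplying \eqref{plan2} through by $2$ and assembling all four bounds gives exactly \eqref{eel4}. The only genuine subtlety — and the step I would be most careful about — is getting the $v_{xx}(x,0)=v_0''$ boundary-in-time term right, including verifying the differentiation formula for the convolution and that the resulting term is controlled purely by the data norm $V_0$ and by $\mathcal{E}(t)$ (not by $\mathcal{E}(t)^{3/2}$), so that it can later be absorbed into the smallness argument \eqref{expl1}; this is where tracking the exact structure, rather than just orders of magnitude, matters.
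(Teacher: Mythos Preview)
Your proposal is correct and essentially mirrors the paper's proof; the only cosmetic difference is that the paper computes $\partial_t(a*v_{xx})=a(0)v_{xx}+a'*v_{xx}$ (yielding \eqref{eel5}) and then invokes the integration-by-parts identity \eqref{eel7} to arrive at \eqref{eel8}, whereas you take the equivalent one-step route $\partial_t(a*v_{xx})=a(t)v_0''+a*v_{xxt}$ and land on the same identity directly. One minor slip: in your displayed energy identity the initial-data term on the right should carry $v_t$, not $v_{xt}$ (no integration by parts in $x$ is applied to that piece---compare \eqref{eel8} and \eqref{eel10}), but since both $\|v_t\|_{L^2(Q_t)}$ and $\|v_{xt}\|_{L^2(Q_t)}$ are dominated by $\sqrt{\mathcal{E}(t)}$ the final estimate is unaffected.
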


\begin{proof}
First, we derivate \eqref{p5} w.r.t. $t$ and obtain

\beq\label{eel5} 
v_{tt}(x,t)+g'(0)a(0)v_{xx}(x,t)+g'(0)\int_0^t a'(t-s)v_{xx}(x,s)\ud s=f_t+\mathcal{G}_t
\eeq

Secondly, multiplying the above by $v_t$ and integrating on $\Omega$ and on $[0,t]$ leads to

\begin{align}\label{eel6}
& \dfrac{1}{2} \int_\Omega v_t^2(x,t)\ud x -\dfrac{1}{2} \int_\Omega v_t^2(x,0)\ud x-g'(0)a(0)\int_0^t\int_\Omega v_x v_{xt} \ud x \ud s \nonumber\\
& -g'(0)\int_0^t\int_\Omega \int_0^s a'(s-\tau) v_x(\tau)\ud\tau  v_{xt}(s)\ud x\ud s 
=  \int_0^t\int_\Omega f_t v_t \ud x \ud s + \int_0^t\int_\Omega \mathcal{G}_t v_t \ud x \ud s
\end{align}

Observe now that

\beq\label{eel7}
\int_0^s a'(s-\tau)v_x(\tau)\ud\tau=-a(0) v_x(s)+a(s)v_x(0)+\int_0^s a(s-\tau)v_{xt}(\tau)\ud \tau
\eeq

One now gets:

\begin{align}\label{eel8}
& \dfrac{1}{2}\int_\Omega v_t^2(x,t)\ud x- 
g'(0)Q\left(v_{xt},t,a \right)=\dfrac{1}{2} \int_\Omega  v_t^2(x,0)\ud x 
-g'(0) \int_0^t \int_\Omega a(s) v''_0(x)v_t(x,s)\ud x\ud s \nonumber\\
& +\int_0^t \int_\Omega \left( f_t v_t\right) (x,s) \ud x\ud s+\int_0^t \int_\Omega \left( \mathcal{G}_t v_t\right)(x,s)  \ud x \ud s
\end{align}

Notice that 
\beq\label{eel9}
v_t(x,0)=f(x,0)
\eeq

which gives $\displaystyle\int_\Omega  v_t^2(x,0)\ud x\leq F$.  We also have

\begin{align}\label{eel10}
\left|\int_0^t \int_\Omega a(s) v''_0(x)v_t(x,s)\ud x\ud s \right| & \leq \left\|v''_0\right\|_{L^2(\Omega)}\left\|a\right\|_{L^1(\mathbb{R}_+)}\mathop{\sup}_{0\leq s \leq t}\left\|v_t(\cdot,s)\right\|_{L^2(\Omega)}\nonumber\\
& \leq \left\|a\right\|_{L^1(\mathbb{R}_+)} \sqrt{V_0} \sqrt{\mathcal{E}(t)}
\end{align}

and
\beq\label{eel11}
\int_0^t \int_\Omega \left( f_t v_t\right) (x,s) \ud x\ud s \leq \sqrt{F}\sqrt{\mathcal{E}(t)}
\eeq

Finally, invoking part (iii) of Lemma \ref{le} and part (i) of Lemma \ref{adl2} we deduce that

\beq \label{eel12}
\int_0^t \int_\Omega \left( \mathcal{G}_t v_t\right)(x,s)  \ud x \ud s \leq K \overline{a} \nu(t) 
\mathcal{E}(t) + K \nu(t) \|\psi\|_{L^1(\mathbb{R}_+)}\mathcal{E}(t) 
\eeq

and with the obtainment of this last estimates the proof ends.  

\end{proof}

Next, in order to obtain energy estimates for $\displaystyle \int_\Omega v^2_{tt}(x,t)\ud x$ we shall use the difference operator $\displaystyle \left(\triangle_h w \right)(x,t)=w(x, t+h)-w(x,t)$, for $h>0$ small enough.

%%%%%%%%%%%%%%%%%%%%%%%%%%%
%%%%%%%%%%%%%%%%%%%%%%%%%%%%

\begin{lemma}\label{lies}
Under the assumption that \eqref{pr6} is fulfilled,  one has:

\begin{align}\label{lies1}
\int_\Omega v_{tt}^2(x,t)\ud x -2g'(0) \displaystyle \mathop{\lim}_{h\to 0_+}\dfrac{1}{h^2} 
Q \left(\triangle_h v_{xt},t,a \right) \leq &  C \bigg\{ F + \sqrt{F}\sqrt{\mathcal{E}(t)} \nonumber\\
& + \left[\nu(t)+\nu^3(t) \right] \mathcal{E}(t) + \sqrt{V_0} \mathcal{E}(t) \bigg\}
\end{align}

\end{lemma}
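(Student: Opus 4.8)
The plan is to establish the last of the Brandon--Hrusa--type energy estimates, this time for $\int_\Omega v_{tt}^2\,\ud x$. Since differentiating \eqref{eel5} once more in $t$ would bring in $a''$, which is not controlled uniformly in $n$, I replace the extra time derivative by the finite difference $\triangle_h$: set $W:=\tfrac1h\triangle_h v$ and $w:=W_t=\tfrac1h\triangle_h v_t$, which are defined on $[0,t]$ for $0<h<T-t$ and satisfy $w(\cdot,s)\to v_{tt}(\cdot,s)$ in $L^2(\Omega)$ for $s=t$ and $s=0$ as $h\to0_+$, because $v_{tt}\in\mathscr{C}^0\bigl([0,T);L^2(\Omega)\bigr)$. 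Apply $\tfrac1h\triangle_h$ to \eqref{eel5} (using that $\triangle_h$ commutes with $\partial_x$ and with convolution in $t$), multiply by $w$, integrate over $Q_t$, and integrate by parts in $x$ in the two memory terms (using $W=W_t=0$ on $\partial\Omega$). In the resulting $-g'(0)\int_0^t\!\int_\Omega(a'*W_x)W_{xt}\,\ud x\,\ud s$ use the identity \eqref{eel7} applied to $W_x$, namely $(a'*W_x)(x,s)=(a*W_{xt})(x,s)-a(0)W_x(x,s)+a(s)W_x(x,0)$ — which involves only $a$ and $a'$: the $-a(0)W_x$ contribution cancels the $a(0)W_{xx}$ term exactly, the $(a*W_{xt})$ contribution produces $-g'(0)Q(W_{xt},t,a)=-g'(0)h^{-2}Q(\triangle_h v_{xt},t,a)$, and the remainder is a data term. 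This yields the identity
\begin{align*}
\|w(\cdot,t)\|_{L^2(\Omega)}^2 - 2g'(0)\,h^{-2}Q(\triangle_h v_{xt},t,a) &= \|w(\cdot,0)\|_{L^2(\Omega)}^2 + 2g'(0)\int_0^t\!\!\int_\Omega a(s)\,W_x(x,0)\,W_{xt}\,\ud x\,\ud s \\
&\quad + 2\int_0^t\!\!\int_\Omega\Bigl(\tfrac1h\triangle_h f_t + \tfrac1h\triangle_h\mathcal{G}_t\Bigr)w\,\ud x\,\ud s .
\end{align*}

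I then bound the right-hand side uniformly in $h$ (note that \eqref{pr6} makes $\mathcal{E}$ bounded on $[0,T)$, so all finite-difference quantities below are bounded uniformly in $h$). For $\|w(\cdot,0)\|^2$ I use \eqref{eel5} at $t=0$, where $\mathcal{G}_t(\cdot,0)=0$ (directly from \eqref{pr5}, since $\overline{v}^0_x\equiv0$ and the outer integrals are over $(0,0)$), so that $v_{tt}(\cdot,0)=f_t(\cdot,0)-g'(0)a(0)v_0''$ and $\|w(\cdot,0)\|^2\to\|v_{tt}(\cdot,0)\|^2\le C(F+V_0)$. In the data term I integrate by parts in $s$ ($W_{xt}=\partial_sW_x$) and use $W_x(\cdot,0)\to v_{xt}(\cdot,0)=f_x(\cdot,0)$ together with $\sup_s\|W_x(\cdot,s)\|_{L^2(\Omega)}\le C\sqrt{\mathcal{E}(t)}$ and the (uniform in $n$) bounds on $\|a\|_{L^\infty}$, $\|a\|_{W^{1,1}}$ to get $C\bigl(F+\sqrt F\,\sqrt{\mathcal{E}(t)}\bigr)$. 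For the $f_t$ term, Cauchy--Schwarz with $\|\tfrac1h\triangle_h f_t\|_{L^2(Q_t)}\le\|f_{tt}\|_{L^2(Q_{t+h})}\le\sqrt F$ and $\|w\|_{L^2(Q_t)}\le\sqrt{\mathcal{E}(t+h)}$ gives, in the limit, $2\sqrt F\,\sqrt{\mathcal{E}(t)}$.

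The main obstacle is the term $\int_0^t\!\int_\Omega\tfrac1h\triangle_h\mathcal{G}_t\,w\,\ud x\,\ud s$: one cannot simply let $h\to0$ and replace it by $\int\!\int\mathcal{G}_{tt}\,v_{tt}$, since $\mathcal{G}_{tt}$ — and hence the estimate — would involve $v_{xxt}$, which is not available in the regularity class of Proposition \ref{pr}. Instead I write $\mathcal{G}_t$ in the explicit form \eqref{pr5} (equivalently use \eqref{p6}) and distribute $\triangle_h$ by the Leibniz/shift rule so that the difference quotients fall on the kernel $a'$ and on the smooth compositions $g^{(j)}(\overline{v}^t_x)-g^{(j)}(0)$, never on $v_{xx}$; in the memory integrals this amounts to an integration by parts in the memory variable converting $\tfrac1h\triangle_h a'$ into $a''$ carrying the weight $r_0(s)=\min\{s,\sqrt s\}$, so that $\int_0^{+\infty}|a''(s)|r_0(s)\,\ud s=\int_0^1 s|a''|+\int_1^{+\infty}\sqrt s\,|a''|<+\infty$ uniformly in $n$ by $(a_3)$. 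Combining this with Lemma \ref{le}(i) ($|g^{(j)}(\overline{v}^t_x)-g^{(j)}(0)|\le K\min\{\nu(t)r_0(s),\theta\}$), the quadratic refinement \eqref{ap7}, the convolution inequalities of Lemma \ref{adl2}, and the vanishing of the boundary contributions at the memory origin (which carry the factor $g'(\overline{v}^t_x(x,0))-g'(0)=0$), each resulting piece is bounded by $C\bigl(\nu(t)+\nu^3(t)\bigr)\mathcal{E}(t)$, except for those in which $v_{xx}$ is evaluated near $s=0$, i.e.\ close to $v_0''$, which contribute $C\sqrt{V_0}\,\mathcal{E}(t)$. All these bounds being uniform in $h$, the right-hand side of the identity stays bounded; hence $h^{-2}Q(\triangle_h v_{xt},t,a)$ is bounded and, all other terms of the identity converging as $h\to0_+$, so does it. Passing to the limit yields \eqref{lies1}.
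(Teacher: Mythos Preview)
Your overall plan---apply $\triangle_h$ to the once-differentiated equation, multiply by $\tfrac1h\triangle_h v_t$, extract $-g'(0)h^{-2}Q(\triangle_h v_{xt},t,a)$, bound the remainder uniformly in $h$, and pass to the limit---is exactly the strategy of the paper. The only organizational difference is that the paper applies $\triangle_h$ to the nonlinear form $\partial_t$\eqref{p4} rather than to \eqref{eel5}; after the integration by parts in $x$ this produces a decomposition $I_1+I_2+I_3+I_4$ of the nonlinear contribution (see \eqref{lies4}--\eqref{lies8}) in place of your single remainder $\int_{Q_t}\tfrac1h\triangle_h\mathcal{G}_t\,w$. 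The two are algebraically equivalent, and the tools you name (Lemma~\ref{le}(i), assumption $(a_3)$, Lemma~\ref{adl2}) are the right ones.

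There is, however, a concrete gap in your handling of the nonlinear remainder. You assert that one can ``distribute $\triangle_h$ by the Leibniz/shift rule so that the difference quotients fall \dots\ never on $v_{xx}$''. This is not achievable by Leibniz alone: the first summand in \eqref{pr5} is $v_{xx}(x,t)\int_0^{+\infty}a'(s)[g'(\overline v_x^t(x,s))-g'(0)]\,\ud s$, and any product splitting of $\triangle_h$ on it leaves a factor $\tfrac1h\triangle_h v_{xx}$, which is \emph{not} controlled uniformly in $h$ in any $L^p(Q_t)$ norm available from Proposition~\ref{pr} or from $\mathcal{E}(t)$. Converting $\tfrac1h\triangle_h a'$ to $a''$ in the memory integrals does not touch this term. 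The paper resolves the analogous difficulty by a \emph{further} integration by parts in the time variable $s$: each occurrence of $\triangle_h v_{xt}(\cdot,s)$ is rewritten as $\partial_s(\triangle_h v_x)$ and the $\partial_s$ is thrown onto the kernel $\xi(s,t,x)=a'(s)[g'(\overline v_x^t)-g'(0)]$, producing $\partial_1\xi$, $\partial_2\xi$, $\partial_{22}\xi$ (pointwise bounds in Lemma~\ref{ies}) together with boundary contributions at $s=0$ that carry $v_0'$, $v_0''$---this is precisely where the $\sqrt{V_0}\,\mathcal{E}(t)$ term originates (see \eqref{lies90}--\eqref{lies93}, \eqref{lies6n2}--\eqref{lies6n3}, and \eqref{j01}, \eqref{lies6n6}). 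This step is the technical heart of the estimate, occupies most of the Appendix, and is absent from your sketch. In your formulation the cure would be an additional integration by parts in $x$ on the offending piece (using $w=W_t=0$ on $\partial\Omega$), turning $\tfrac1h\triangle_h v_{xx}$ into $\tfrac1h\triangle_h v_x$; but then you must check that the $x$-derivative of the kernel---which now contains $\overline v_{xx}^t=u_{xx}(t)-u_{xx}(t-s)$---is controlled, which requires $u_{xx}\in L^\infty(0,T;H^1(\Omega))$ from $\mathcal{E}(t)$. Either route works, but neither is the one-line ``distribute $\triangle_h$'' you describe.
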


For the Proof, see the Appendix Section.

%%%%%%%%%%%%%%%%%%%%%%%%%%%%%
%%%%%%%%%%%%%%%%%%%%%%%%%%%%%

%%%%%%%%%%%%%%%%%%%%%%%%%%%
%%%%%%%%%%%%%%%%%%%%%%%%%%%%

Since $\nu(t)$ and $\mathcal{E}(t)$ are non-increasing functions in $t$, we obtain as a consequence 
of Lemma \ref{eel}, Lemma \ref{eel3}, Lemma \ref{lies}, Lemma \ref{ah} and Sobolev 
embeddings, that:

\begin{lemma}\label{lse} 
Under the assumption stated in \eqref{pr6} one has

\begin{equation}
\label{lse1}
%\begin{align}\label{lse1}
%\displaystyle\mathop{\sup}_{0\leq s \leq t}  & \int_\Omega \left(v^2+v^2_t+v^2_x+v^2_{tt}+v^2_{xt} \right)(x,s)\ud x \nonumber\\
%&  + \int_0^t \int_\Omega \left(v^2+v^2_t+v^2_x+v^2_{xt} \right)(x,s)\ud x\ud s \nonumber\\
%& 
\mathcal{E}_1(t) \leq C \left\lbrace V_0+F+\left(\sqrt{V_0}+\sqrt{F} \right)
\sqrt{\mathcal{E}(t)}+\left[\nu(t)+\nu^3(t) \right]\mathcal{E}(t) + \sqrt{V_0}\mathcal{E}(t) 
\right\rbrace 
%\end{align}
\end{equation}

%where $C>0$ is a constant independent of $n$.  

\end{lemma}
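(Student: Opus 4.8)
The plan is to combine the three energy estimates of Lemmas \ref{eel}, \ref{eel3}, \ref{lies} with the G\aa rding-type inequality of Lemma \ref{ah} applied separately to $w = v_x$, $w = v_{xt}$ (and implicitly $w = v_{xtt}$ via the finite-difference version), so that the quadratic forms $Q(v_x,t,a)$, $Q(v_{xt},t,a)$ and $\liminf_h h^{-2}Q(\triangle_h v_{xt},t,a)$ — which appear with the favorable sign $-g'(0)>0$ on the left of the three estimates — get converted into genuine $L^\infty_t L^2_x$ and $L^2_t L^2_x$ control of $v_x$, $v_{xt}$, $v_{tt}$ plus harmless boundary terms $\int_\Omega v_x^2(x,0)\,dx$, $\int_\Omega v_{xt}^2(x,0)\,dx$. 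First I would add the three inequalities \eqref{eel1}, \eqref{eel4}, \eqref{lies1}; the left-hand side becomes
\begin{align*}
\int_\Omega \left(v^2 + v_t^2 + v_{tt}^2\right)(x,t)\,dx - 2g'(0)\Big[Q(v_x,t,a) + Q(v_{xt},t,a) + \lim_{h\to 0_+}\tfrac{1}{h^2}Q(\triangle_h v_{xt},t,a)\Big],
\end{align*}
while the right-hand side is already of the desired form $C\{V_0 + F + (\sqrt{V_0}+\sqrt F)\sqrt{\mathcal E(t)} + [\nu(t)+\nu^3(t)]\mathcal E(t) + \sqrt{V_0}\,\mathcal E(t)\}$ (absorbing $\|\psi\|_{L^1}$, $\overline a$, $\|a\|_{L^1}$ into $C$, which is legitimate since $(a_3)$ makes these uniformly bounded in $n$ by Remark \ref{rk1} and Lemma \ref{adl1}).

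Next I would invoke Lemma \ref{ah} with $b=a$ (valid since $(a_4)$ gives $\mathrm{Re}\,\hat a_n(\omega)\geq M_1/(1+\omega^2)$): applying \eqref{ah1} to $w=v_x$ and to $w=v_{xt}$, and \eqref{ah2} to $w=v_{xt}$ for the finite-difference term, yields
\begin{align*}
\int_\Omega\left(v_x^2+v_{xt}^2\right)(x,t)\,dx + \int_0^t\!\!\int_\Omega\left(v_x^2+v_{xt}^2\right)(x,s)\,dx\,ds \leq \frac{C}{M_1}\Big[Q(v_x,t,a)+Q(v_{xt},t,a)+\lim_{h\to0_+}\tfrac{1}{h^2}Q(\triangle_h v_{xt},t,a)\Big] + C\!\int_\Omega\!\left(v_x^2+v_{xt}^2\right)(x,0)\,dx.
\end{align*}
Since $-g'(0)>0$, a suitable positive multiple of the three $Q$-terms can therefore be bounded by (a constant times) the left-hand side of the summed energy inequality plus the boundary data, and conversely the summed energy inequality controls $\int_\Omega(v^2+v_t^2+v_{tt}^2)(x,t)\,dx$ plus those same $Q$-terms; chaining the two gives control of $\int_\Omega(v^2+v_x^2+v_t^2+v_{xt}^2+v_{tt}^2)(x,t)\,dx$ and $\int_0^t\!\int_\Omega(v^2+v_x^2+v_t^2+v_{xt}^2)(x,s)\,dx\,ds$, i.e. exactly $\mathcal E_1(t)$. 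To finish I would note that the boundary terms $\int_\Omega v_x^2(x,0)\,dx = \int_\Omega (v_0')^2\,dx \leq V_0$ and $\int_\Omega v_{xt}^2(x,0)\,dx = \int_\Omega f_x^2(x,0)\,dx \leq F$ by \eqref{eel9} and $(f_1)$, and that the missing zeroth-order $L^2_t$ terms $\int_0^t\!\int_\Omega v^2$, the $v^2(x,0)$ term, etc., follow trivially from Poincar\'e's inequality on the bounded interval $\Omega$ and from $v(x,0)=v_0$; absorbing all of this into the generic constant $C$ produces \eqref{lse1}.

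The main obstacle is bookkeeping rather than a deep idea: one must be careful that the coefficient in front of the combined $Q$-terms, after multiplying Lemma \ref{ah} by an appropriate small constant and adding it to the summed energy estimate, ends up with the correct sign so that the $Q$-terms cancel (or are absorbed) without losing the coercive $\int_\Omega(\cdots)(x,t)\,dx + \int_0^t\!\int_\Omega(\cdots)$ on the left; this is where the sign $g'(0)<0$ from $(g_3)$ is essential and where $C$ must be chosen independent of $n$ (which it is, because $M_1$ in $(a_4)$ and the $W^{1,1}$-bounds in $(a_2)$, $(a_3)$ are uniform in $n$). A secondary point requiring a line of justification is the use of the $\liminf_h h^{-2}Q(\triangle_h v_{xt},t,a)$ form from Lemma \ref{lies} together with the $\liminf$ form \eqref{ah2} of the G\aa rding inequality — one should apply \eqref{ah2} to $w=v_{xt}$ and pass to the $\liminf$ consistently on both sides so that the finite-difference quadratic form controls $\int_\Omega v_{xt}^2(x,t)\,dx + \int_0^t\!\int_\Omega v_{xt}^2$, which it does since $v_{xt}\in\mathscr C^0([0,T);L^2(\Omega))$ by Proposition \ref{pr}. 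Everything else is routine application of \eqref{iq1}, Cauchy--Schwarz, and Sobolev embedding on the one-dimensional domain $\Omega$.
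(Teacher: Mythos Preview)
Your proposal is correct and follows exactly the approach the paper itself uses: the paper's proof of Lemma~\ref{lse} is the one-line remark that the result follows from Lemmas~\ref{eel}, \ref{eel3}, \ref{lies}, the G\aa rding inequality Lemma~\ref{ah}, Sobolev (i.e.\ Poincar\'e) embeddings, and the monotonicity of $\nu(t)$ and $\mathcal E(t)$. Your write-up simply fills in the bookkeeping the paper omits --- adding the three energy inequalities, applying \eqref{ah1} to $w=v_x$ and \eqref{ah2} to $w=v_{xt}$, absorbing the initial traces via $v_x(x,0)=(v_0')$ and $v_{xt}(x,0)=f_x(x,0)$, and recovering the missing $L^2_tL^2_x$ norms of $v$ and $v_t$ by Poincar\'e --- and all of these steps are sound (minor quibble: your phrase ``applying \eqref{ah1} to $w=v_x$ and to $w=v_{xt}$'' is a slip, since for $w=v_{xt}$ only the $\liminf$ form \eqref{ah2} is available, but your displayed inequality and subsequent argument use the correct version).
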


%%%%%%%%%%%%%%%%%%%%%%%%%%%%%%%%%%%%%%%%%%%
%%%%%%%%%%%%%%%%%%%%%%%%%%%%%%%%%%%%%%%%%%%
\subsection{Non-energy estimates.}
%%%%%%%%%%%%%%%%%%%%%%%%%%%%%%%%%%%%%%%
%%%%%%%%%%%%%%%%%%%%%%%%%%%%%%%%%%%%%%%

In the following we obtain estimates for the other constitutive terms of $\mathcal{E}(t)$.  

Now, from \eqref{p5} and using for a.e. $x\in\Omega$ the result of Theorem \ref{il} with $b=a$, 

$l(t)=\dfrac{1}{g'(0)}\left[f(x,t)+\mathcal{G}(x,t)-v_t(x,t) \right] $, and $w(t)=v_{xx}(x,t)$, we deduce the equality

\beq\label{sl0}
v_{xx}=\dfrac{1}{g'(0)}\left[ 
\dfrac{1}{a(0)}\left(f_t+\mathcal{G}_t-v_{tt} \right)+A_1*\left(f_t+\mathcal{G}_t-v_{tt} \right)   + A_2*\left(f+\mathcal{G}-v_{t} \right)\right] 
\eeq

where $A_1, A_2\in L^1_{[0,+\infty)}(\mathbb{R})$ are two functions that depend on $a_n$, 
with bounded $L^1$ norms which are independent of $n$, due to $(a_2)$ and $(a_5)$.  

We have the following estimate:

\begin{lemma}\label{sl}
Under the assumption stated in \eqref{pr6} one has

\begin{align}\label{sl1} 
& \int_\Omega v^2_{xx}(x,t)\ud x+ \int_0^t \int_\Omega  v^2_{xx}(x,s)\ud x\ud s+\int_0^t \int_\Omega  v^2_{tt}(x,s)\ud x\ud s \nonumber\\
%& \leq C \left\lbrace V_0+F+\left(\sqrt{V_0}+\sqrt{F} \right)\sqrt{\mathcal{E}(t)}+
%\left[\nu(t)+\nu^3(t) \right]\mathcal{E}(t) + \sqrt{V_0}\mathcal{E}(t)  \right\rbrace
& \leq C \left[ F + \mathcal{E}_1(t) + \nu(t) \mathcal{E}(t) \right]
\end{align}

%where $C>0$ is a constant independent of $n$.

\end{lemma}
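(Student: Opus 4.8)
The plan is to exploit the pointwise representation \eqref{sl0} of $v_{xx}$ together with the convolution inequalities \eqref{iq2} and Lemma \ref{adl2}, and then close the loop by feeding the resulting bound on $v_{xx}$ back into \eqref{eel5} to control $v_{tt}$. First I would estimate $\|v_{xx}(\cdot,t)\|_{L^2(\Omega)}^2$. Squaring \eqref{sl0} and using the triangle inequality, it suffices to bound, for each term on the right, its $L^\infty_tL^2_x$ norm. The terms involving $f_t$, $f$, $v_t$, $v_{tt}$ directly are handled by part (ii) of Lemma \ref{adl2} applied with $w_3$ the relevant function at time $s$ and $w_4$ ranging over $s\in[0,t]$: the pure terms $\frac{1}{a(0)}v_{tt}$ etc.\ are already contained in $\mathcal{E}_1(t)+F$ (recall $v_{tt}$ is part of $\mathcal{E}_1$, and $f,f_t$ are controlled by $F$), and the convolution terms $A_1*(\cdot)$, $A_2*(\cdot)$ pick up a factor $\|A_i\|_{L^1}$ which is bounded uniformly in $n$ by hypotheses $(a_2)$, $(a_5)$. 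The terms involving $\mathcal{G}_t$ and $\mathcal{G}$ are handled via Lemma \ref{le}(ii)--(iii): $|\mathcal{G}(x,t)|$ and $|\mathcal{G}_t(x,t)|$ are pointwise dominated by $K\nu(t)$ times $\overline a|v_{xx}(x,t)|$ plus a convolution $|v_{xx}(x,\cdot)|*\psi$, both integrable in the right sense; propagating this through the further convolutions with $A_1,A_2$ and using Young's inequality \eqref{iq2} in time produces a contribution of the form $C\nu(t)\bigl(\|v_{xx}\|_{L^\infty_tL^2_x}+\text{lower order}\bigr)$.

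The crucial point, and what I expect to be the main obstacle, is the apparent circularity: the bound on $\|v_{xx}(\cdot,t)\|_{L^2(\Omega)}$ obtained this way contains $\|v_{xx}\|$ again on the right, through the $\mathcal{G}$, $\mathcal{G}_t$ terms. The resolution is that the self-referential term carries the prefactor $C\nu(t)$ (coming from Lemma \ref{le}), and under the smallness assumption \eqref{pr6} we have $\nu(t)\le C_\Omega\sqrt{\mathcal{E}(t)}\le\theta/2$, which can be made small; hence the term $C\nu(t)\|v_{xx}\|$ can be absorbed into the left-hand side (after taking the supremum over $s\in[0,t]$ first). One must be careful to take the $L^\infty$-in-time supremum on both sides before absorbing, so that the absorbed quantity is genuinely finite — here the a priori regularity $v_n\in\mathscr{C}^0([0,T_n);H^2)$ from Proposition \ref{pr} guarantees $\sup_{s\le t}\|v_{xx}(\cdot,s)\|_{L^2(\Omega)}<\infty$, so the absorption is legitimate.

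Having obtained $\sup_{s\in[0,t)}\int_\Omega v_{xx}^2\,\mathrm{d}x \le C[F+\mathcal{E}_1(t)+\nu(t)\mathcal{E}(t)]$, the same computation run with the space-time $L^2(Q_t)$ norm instead of $L^\infty_tL^2_x$ — now using part (i) of Lemma \ref{adl2} and \eqref{iq2} directly — yields the $\int_0^t\int_\Omega v_{xx}^2$ bound; note $\int_0^t\int_\Omega v_{xx}^2$ is \emph{not} in $\mathcal{E}_1(t)$, so only the $L^2(Q_t)$ norms of $v,v_x,v_t,v_{xt}$ (which are in $\mathcal{E}_1$) and of $f,f_t$ (controlled by $F$) enter, plus the absorbable $\nu(t)$-term. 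Finally, the estimate for $\int_0^t\int_\Omega v_{tt}^2$ follows by reading \eqref{eel5} backwards: $v_{tt}=f_t+\mathcal{G}_t-g'(0)a(0)v_{xx}-g'(0)a'*v_{xx}$, so $\|v_{tt}\|_{L^2(Q_t)}$ is bounded by $\|f_t\|_{L^2(Q_t)}$, $\|\mathcal{G}_t\|_{L^2(Q_t)}$ (Lemma \ref{le}(iii) plus Lemma \ref{adl2}(i), giving $C\nu(t)\mathcal{E}(t)$), and $(|a(0)|+\|a'\|_{L^1})\|v_{xx}\|_{L^2(Q_t)}$ via \eqref{iq2}, the latter already estimated in the previous step. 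Summing the three bounds gives \eqref{sl1}.
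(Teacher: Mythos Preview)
Your Step~1 (the $L^\infty_tL^2_x$ bound on $v_{xx}$) and Step~3 (the $L^2(Q_t)$ bound on $v_{tt}$ via \eqref{eel5}) match the paper's approach and are fine. The genuine gap is in your Step~2.

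When you pass to the $L^2(Q_t)$ norm of \eqref{sl0} you write that ``only the $L^2(Q_t)$ norms of $v,v_x,v_t,v_{xt}$ \dots\ enter''. But \eqref{sl0} contains $v_{tt}$ explicitly, both as $\tfrac{1}{a(0)}v_{tt}$ and inside $A_1*v_{tt}$. Taking the $L^2(Q_t)$ norm of those terms forces $\|v_{tt}\|_{L^2(Q_t)}$ onto the right-hand side, and this quantity is \emph{not} in $\mathcal{E}_1(t)$ (only $\sup_s\|v_{tt}(\cdot,s)\|_{L^2}^2$ is), nor is it yet available since your Step~3 depends on Step~2. Trying to close the loop by coupling Step~2 and Step~3 fails too: from \eqref{sl0} one gets $\|v_{xx}\|_{L^2(Q_t)}\le C\|v_{tt}\|_{L^2(Q_t)}+\text{(good)}$ and from \eqref{eel5} one gets $\|v_{tt}\|_{L^2(Q_t)}\le C\|v_{xx}\|_{L^2(Q_t)}+\text{(good)}$, with no smallness on the constants.

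The paper resolves this not by squaring \eqref{sl0} but by multiplying it by $v_{xx}$ and integrating over $Q_t$, then integrating the dangerous terms by parts in time. Concretely (see \eqref{sl19}--\eqref{sl23}),
\[
\int_{Q_t} v_{tt}\,v_{xx}\,\ud x\,\ud s
=\int_\Omega v_{xx}v_t\,\ud x\Big|_{0}^{t}+\int_{Q_t}v_{xt}^2\,\ud x\,\ud s,
\]
and a similar identity handles $\int_{Q_t}(A_1*v_{tt})v_{xx}$. The boundary terms are controlled using the Step~1 bound on $\sup_s\|v_{xx}(\cdot,s)\|_{L^2}$ together with $\mathcal{E}_1(t)$ and $F,V_0$, while $\int_{Q_t}v_{xt}^2$ is already in $\mathcal{E}_1(t)$. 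This integration-by-parts device is the missing idea in your argument.

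A minor remark on Step~1: you do not actually need to absorb the $\nu(t)$-weighted $v_{xx}$ term into the left side. Since $\sup_s\|v_{xx}(\cdot,s)\|_{L^2}^2\le\mathcal{E}(t)$, those contributions can simply be recorded as $C\nu(t)\mathcal{E}(t)$ on the right, which is exactly the form allowed by \eqref{sl1}. The paper proceeds this way.
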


\begin{proof}

\textbf{Step 1.}

We multiply \eqref{sl0} by $v_{xx}$ and integrate on $\Omega$.  It is clear that, for any $\eta>0$, we have

\beq\label{sl2} 
\left|\int_\Omega \left(f_t-v_{tt} \right)v_{xx}\ud x  \right|\leq \eta  \int_\Omega v^2_{xx} \ud x +\dfrac{1}{2\eta}\int_\Omega \left(f^2_t+v^2_{tt} \right)\ud x
\eeq

From part (iii) in Lemma \ref{le} we obtain

\begin{align}\label{sl3}
\left|\int_\Omega \mathcal{G}_t v_{xx} \ud x  \right| & \leq K \nu(t) \int_\Omega  
\left|v_{xx}(x,t)\right|\left(\left|v_{xx}\right|*|\psi| \right)(x,t)\ud x  \nonumber\\
& + \overline{a} K \nu(t) \int_\Omega \left|v_{xx}(x,t)\right|^2\ud x
\end{align}

Further, with the help of part (ii) in Lemma \ref{adl2} we obtain

\begin{align}\label{sl4}
\left|\int_\Omega \mathcal{G}_t v_{xx} \ud x  \right| & \leq K \nu(t) \left\|v_{xx}(\cdot,t) 
\right\|_{L^2(\Omega)} \|\psi\|_{L^1\left(\mathbb{R}_+ \right) } \displaystyle 
\mathop{\sup}_{0\leq \tau \leq t}\left\|v_{xx}(\cdot,\tau)\right\|_{L^2(\Omega)} \nonumber\\
& + \overline{a} K \nu(t) \left\|v_{xx}(\cdot,t) \right\|^2_{L^2(\Omega)} 
\leq K \nu(t) \left[ \|\psi\|_{L^1\left(\mathbb{R}_+ \right) }+\overline{a}\right]\mathcal{E}(t) 
\end{align}

For any $\eta>0$ one has

\begin{align}\label{sl5}
& \left|\int_\Omega A_1*\left(f_t-v_{tt} \right)v_{xx} \ud x  \right|\leq  \left\|A_1\right\|_{L^1\left(\mathbb{R}_+ \right) }  \left\|v_{xx}(\cdot,t) \right\|_{L^2\left(\Omega \right) } \displaystyle \mathop{\sup}_{0\leq \tau \leq t}\left[\left\|f_t(\cdot,\tau) \right\|_{L^2\left(\Omega \right) }+\left\|v_{tt}(\cdot,\tau) \right\|_{L^2\left(\Omega \right) } \right] \nonumber\\
& \leq \eta \left\|v_{xx}(\cdot,t) \right\|^2_{L^2\left(\Omega \right) }+\dfrac{1}{2\eta}\left\|A_1\right\|^2_{L^1\left(\mathbb{R}_+ \right) }\displaystyle \mathop{\sup}_{0\leq \tau \leq t}\left[\left\|f_t(\cdot,\tau) \right\|^2_{L^2\left(\Omega \right) }  + \left\|v_{tt}(\cdot,\tau) \right\|^2_{L^2\left(\Omega \right) }\right] 
\end{align}

and also

\begin{align}\label{sl6}
& \left|\int_\Omega A_2*\left(f-v_{t} \right)v_{xx} \ud x  \right| \nonumber\\
& \leq \eta \left\|v_{xx}(\cdot,t) \right\|^2_{L^2\left(\Omega \right) }+\dfrac{1}{2\eta} \left\|A_2\right\|^2_{L^1\left(\mathbb{R}_+ \right) } \displaystyle \mathop{\sup}_{0\leq \tau \leq t}\left[\left\|f(\cdot,\tau) \right\|^2_{L^2\left(\Omega \right) }+\left\|v_{t}(\cdot,\tau) \right\|^2_{L^2\left(\Omega \right) } \right]
\end{align}

We now have:

\begin{align}\label{sl7}
\left|\int_\Omega \left( A_1*\mathcal{G}_t\right)(x,t)v_{xx}(x,t)  \ud x  \right| & \leq 
\overline{a} K \nu(t) \int_\Omega \left( \left|A_1\right|*\left|v_{xx}\right| 
\right)(x,t)\left|v_{xx}(x,t)\right|  \ud x \nonumber\\
& +K \nu(t) \int_\Omega \left(\left|A_1\right|*\left|\psi\right|*\left|v_{xx}(x,t)\right|  
\right)(x,t) \left|v_{xx}(x,t)\right|  \ud x
\end{align}

Then:

\begin{align}\label{sl8}
& \left|\int_\Omega \left( A_1*\mathcal{G}_t\right)(x,t)v_{xx}(x,t)  \ud x  \right|  \nonumber\\
& \leq K \nu(t) 
\left[ 
\overline{a} \left\|A_1\right\|_{L^1\left(\mathbb{R}_+ \right) } +
\left\| \, |A_1| * |\psi| \, \right\|_{L^1\left(\mathbb{R}_+ \right) } 
\right]
\left\|v_{xx}(\cdot,t) \right\|_{L^2\left(\Omega \right) }
\mathop{\sup}_{0\leq \tau \leq t}\left\|v_{xx}(\cdot,\tau) 
\right\|_{L^2\left(\Omega \right) }.
%& \leq  \eta \left\|v_{xx}(\cdot,t) \right\|^2_{L^2\left(\Omega \right) }
%+\dfrac{\overline{a}^2}{4}k^2\nu^2(t)\dfrac{1}{\eta}\left\|A_1\right\|^2_{L^1\left(
%\mathbb{R}_+ \right) } \displaystyle \mathop{\sup}_{0\leq \tau \leq t}\left\|v_{xx}(\cdot,\tau) 
%\right\|^2_{L^2\left(\Omega \right) } \nonumber\\
%& + \dfrac{k^2\nu^2(t)}{4} \dfrac{1}{\eta} \left\|\left|A_1\right|*|\psi|\right\|^2_{L^1\left(\mathbb{R}_+ \right) } \displaystyle \mathop{\sup}_{0\leq \tau \leq t}\left\|v_{xx}(\cdot,\tau) \right\|^2_{L^2\left(\Omega \right) }
\end{align}

This gives 

\beq\label{sl9}
\left|\int_\Omega \left( A_1*\mathcal{G}_t\right)(x,t)v_{xx}(x,t)  \ud x  \right|\leq 
C \nu(t) \mathcal{E}(t)
%\eta 
%\left\|v_{xx}(\cdot,t) \right\|^2_{L^2\left(\Omega \right) } +c \nu(t) \dfrac{1}{\eta}\mathcal{E}(t)
\eeq

Likewise,

\beq\label{sl10}
\left|\int_\Omega \left( A_2*\mathcal{G}\right)(x,t)v_{xx}(x,t)  \ud x  \right|\leq 
C \nu(t) \mathcal{E}(t)
%\eta \left\|v_{xx}(\cdot,t) \right\|^2_{L^2\left(\Omega \right) } +c \nu(t) \dfrac{1}{\eta}\mathcal{E}(t)
\eeq

Now, from the above estimates \eqref{sl2}, \eqref{sl4}, \eqref{sl5}, \eqref{sl6}, \eqref{sl9} and 
\eqref{sl10}, with $\eta>0$ small enough 
%and using Lemma \eqref{lse} 
leads to

\beq\label{sl11}
\displaystyle \mathop{\sup}_{0\leq s \leq t}\int_\Omega v^2_{xx}(x,s)  \ud x \leq C
\left[ F + \mathcal{E}_1(t) + \nu(t) \mathcal{E}(t) \right]
%\left\lbrace V_0+F+\left(\sqrt{V_0}+\sqrt{F} \right)\sqrt{\mathcal{E}(t)}+\left[\nu(t)+\nu^3(t) 
%\right] \mathcal{E}(t)  \right\rbrace 
\eeq

\textbf{Step 2.}

We multiply \eqref{sl0} by $v_{xx}$ and integrate on $(0,t)$ and on $\Omega$.  Proceeding as in \textbf{Step 1.}, using part (i) in Lemma \ref{adl2}, one gets  for any $\eta>0$ that

\begin{align}\label{sl13n}
& \int_{Q_t} \left[f_t + \mathcal{G}_t + A_1*f_t + A_2*(f-v_t)+A_1*\mathcal{G}_t +A_2*\mathcal{G}  \right] v_{xx}\ud x \ud s \nonumber\\
& \leq \eta \int_{Q_t} v^2_{xx}\ud x \ud s + \dfrac{C}{\eta} \left[F+\mathcal{E}_1(t) \right]+C \nu(t) \mathcal{E}(t) 
\end{align}

We are left to focus on terms that contain $v_{tt}$.  Invoking density arguments,

\begin{align}\label{sl19}
\int_{Q_t} \left( v_{tt}v_{xx}\right) (x,s)\ud x \ud s = \int_\Omega  \left( v_{xx} v_t\right) (x,t)\ud x - \int_\Omega v_0''(x)v_t(x,0)\ud x+ \int_{Q_t} v^2_{xt}\ud x \ud s
\end{align}

which gives, using \eqref{eel9}, 

\begin{align}\label{sl20}
\left| \int_{Q_t} \left( v_{tt}v_{xx}\right) (x,s)\ud x \ud s \right| & \leq \left\|v_{xx}(\cdot,t) \right\|_{L^2\left(\Omega \right) } \left\|v_{t}(\cdot,t) \right\|_{L^2\left(\Omega \right) } \nonumber\\ 
& + \left\| v_0'' \right\|_{L^2\left(\Omega \right) } \left\|f(\cdot,0)\right\|_{L^2\left(\Omega \right) }+ \int_{Q_t}v^2_{xt}(x,s)\ud x \ud s 
\end{align}

Finally we have:

\begin{align}\label{sl21}
\int_{Q_t} \left( A_1*v_{tt} \right) (x,s)v_{xx}(x,s)\ud x \ud s & = \int_{Q_t} \left( A_1*v_{t} \right)_t v_{xx}(x,s)\ud x \ud s \nonumber\\ 
& - \int_{Q_t} A_1(s) v_t(x,0)v_{xx}(x,s)\ud x \ud s
\end{align}

Again, calling in the density arguments leads to

\begin{align}\label{sl22}
\int_{Q_t} \left( A_1*v_{t} \right)_t (x,s)v_{xx}(x,s)\ud x \ud s & = \int_\Omega 
\left( A_1*v_{t} \right)(x,t)v_{xx}(x,t)\ud x \nonumber\\
& + \int_{Q_t} \left( A_1*v_{xt} \right)v_{xt} \ud x \ud s
\end{align}

From equalities \eqref{sl21} and \eqref{sl22} one easily gets:

\begin{align}\label{sl23}
& \left| \int_{Q_t} \left( A_1*v_{tt}\right)v_{xx} (x,s)\ud x \ud s \right|\leq \|A_1\|_{L^1\left(\mathbb{R}_+ \right)} \nonumber\\  
& \left[\int_{Q_t}v^2_{xt}\ud x \ud s+ \left\|v_{xx}(\cdot,t) \right\|_{L^2\left(\Omega \right) }\displaystyle \mathop{\sup}_{0\leq \tau \leq t}  \left\|v_{t}(\cdot,t) \right\|_{L^2\left(\Omega \right) }+\left\|f(\cdot,0) \right\|_{L^2\left(\Omega \right) }\mathop{\sup}_{0\leq s \leq t}  \left\|v_{xx}(\cdot,s) \right\|_{L^2\left(\Omega \right) }  \right] 
%& + \eta \|A_1\|_{L^1\left(\mathbb{R}_+ \right)}\int_{Q_t} v^2_{xx}(x,s)\ud x \ud s + \dfrac{1}{4\eta}\|A_1\|_{L^1\left(\mathbb{R}_+ \right)}  
\end{align}

Now, adding inequalities \eqref{sl13n}, \eqref{sl20}, \eqref{sl23} and upon using 
%Lemma \eqref{lse} and 
\eqref{sl11} it allows us to get

\beq\label{sl24}
\int_{Q_t} v^2_{xx}(x,t)  \ud x \leq C 
\left[ F + \mathcal{E}_1(t) + \nu(t) \mathcal{E}(t) \right]
%\left\lbrace V_0+F+\left(\sqrt{V_0}+\sqrt{F} \right)\sqrt{\mathcal{E}(t)}+\left[\nu(t)+\nu^3(t) 
%\right] \mathcal{E}(t)  \right\rbrace 
\eeq

%\beq\label{sl24}
%\int_0^t \int_\Omega v^2_{xx}(x,s) \ud x \ud s \leq c_2 \left\lbrace V_0+F+\left(\sqrt{V_0}+
%\sqrt{F} \right)\left(\sqrt{\mathcal{E}(t)}+\left[ \nu(t)+\nu^3(t)\right] \mathcal{E}(t) 
%\right)   \right\rbrace 
%\eeq

%with $c_2\geq0$ a constant.  
%In obtaining the above the Sobolev inequality $\displaystyle \left\|v_{t} \right\|_{L^2\left(Q_t \right) }\leq  c_\Omega \left\| v_{xt} \right\|_{L^2\left(Q_t \right)}$ has been used. 

\textbf{Step 3.}

We now multiply \eqref{eel5} by $v_{tt}$ and integrate on $Q_t$.  We have the listed below results:

\beq\label{sl25}
\left| \int_{Q_t} v_{xx}v_{tt} \ud x \ud s \right| \leq \eta \int_{Q_t} v^2_{tt}\ud x \ud s + \dfrac{1}{4\eta} \int_{Q_t} v^2_{xx}\ud x \ud s 
\eeq

\begin{align}\label{sl26}
\int_{Q_t} \left(a'*v_{xx}\right) v_{tt} \ud x\ud s & \leq \|a'\|_{L^1\left(\mathbb{R}_+ \right)}\left\|v_{xx}  \right\|_{L^2\left(Q_t \right) }\left\|v_{tt}  \right\|_{L^2\left(Q_t \right) } \nonumber\\
& \leq \eta \left\|v_{tt}  \right\|^2_{L^2\left(Q_t \right) } +\dfrac{1}{4\eta}\|a'\|^2_{L^1\left(\mathbb{R}_+ \right)} \left\|v_{xx}  \right\|^2_{L^2\left(Q_t \right) }  
\end{align}

\beq\label{sl27}
\int_{Q_t}  f_t v_{tt}\ud x\ud s  \leq \eta \left\|v_{tt}  \right\|^2_{L^2\left(Q_t \right) } +\dfrac{1}{4\eta}\left\|f_{t}  \right\|^2_{L^2\left(Q_t \right) }
\eeq

\begin{align}\label{sl28}
\int_{Q_t}  \mathcal{G}_t  v_{tt} \ud x \ud s & \leq \overline{a} k \nu(t) \int_{Q_t} \left|v_{xx}  \right|\left|v_{tt}  \right| \ud x \ud s + k \nu(t) \int_{Q_t} \left(\left|v_{xx}  \right|*|\psi| \right) \left|v_{tt}  \right|\ud x \ud s \nonumber\\
& \leq k \nu(t) \left(\overline{a}+\|\psi\|_{L^1\left(\mathbb{R}_+ \right)} \right) \mathcal{E}(t)
\end{align}

We then obtain, taking $\eta$ small enough and using \eqref{sl24}, that

\beq\label{sl29}
\int_{Q_t}  v^2_{xx}(x,t)  \ud x \ud s \leq C 
\left[ F + \mathcal{E}_1(t) + \nu(t) \mathcal{E}(t) \right]
%\left\lbrace V_0+F+\left(\sqrt{V_0}+\sqrt{F} \right)\sqrt{\mathcal{E}(t)}+\left[\nu(t)+\nu^3(t) 
%\right] \mathcal{E}(t)  \right\rbrace 
\eeq

%\beq\label{sl29}
%\int_0^t \int_\Omega v^2_{tt}(x,s)\ud x \ud s \leq c_3\left\lbrace V_0+F+\left(\sqrt{V_0}+\sqrt{F} \right)\left(\sqrt{\mathcal{E}(t)}+\left[ \nu(t)+\nu^3(t)\right] \mathcal{E}(t) \right)   \right\rbrace 
%\eeq

%with $c_3\geq0$ a constant.  

Now from estimates \eqref{sl11}, \eqref{sl24} and \eqref{sl29} we obtain the result
%This ends the proof 
of Lemma \ref{sl}.

\end{proof}

Now we take on to obtaining estimates for $u$ defined as $u(x,t)=\displaystyle\int_0^t v(x,s)\ud s$.  The idea is to integrate \eqref{sl0} w.r.t. $t$; one gets:

\begin{align}\label{sl30}
& u_{xx} = \nonumber\\
& \dfrac{1}{g'(0)} \left\{ \dfrac{f+\mathcal{G}-v_t}{a(0)}  + \int_0^t \left[A_1*\left(f_t+\mathcal{G}_t -v_{tt}\right)  \right] (x,s)  \ud s + \int_0^t \left[A_2*\left(f+\mathcal{G} -v_{t}\right)  \right] (x,s) \ud s \right\}
\end{align}

We shall use in the following the below Lemma:

\begin{lemma}\label{al}
Suppose that $A\in L^1\left( 0,T\right) $, $\f\in W^{1,1}\left( 0,T\right) $.  Then, for any $t\in (0,T)$, we have

\beq\label{al1}
\int_0^t (A*\f')(s)\ud s = A*\left[ \f - \f(0)H \right]  
\eeq
\end{lemma}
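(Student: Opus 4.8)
The plan is to reduce the identity to the associativity of (causal) convolution together with the fundamental theorem of calculus for Sobolev functions. Throughout, all functions are extended by $0$ to $t<0$, as agreed in Section~\ref{ibvps}, and $H$ denotes the Heaviside function; after extending $A$ and $\f'$ by $0$ outside $(0,T)$ they become $L^1(\mathbb{R})$ functions supported in $\mathbb{R}_+$, so their convolutions with each other and with $H$ are again locally integrable and supported in $\mathbb{R}_+$.

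First I would record that for any $g\in L^1_{\mathrm{loc}}(\mathbb{R})$ supported in $\mathbb{R}_+$ one has $\int_0^t g(s)\,\ud s=(H*g)(t)$ for every $t>0$; applied to $g=A*\f'$ this gives $\int_0^t (A*\f')(s)\,\ud s=\bigl(H*(A*\f')\bigr)(t)$. Since $A,\f'\in L^1(0,T)$ and $H\in L^\infty_{\mathrm{loc}}$, the three functions $A,\f',H$ are locally integrable and supported in $\mathbb{R}_+$, hence convolution among them is associative (the relevant Fubini bound being $\iint_{0<\tau<s<t}|A(s-\tau)|\,|\f'(\tau)|\,\ud\tau\,\ud s\le\|A\|_{L^1(0,T)}\|\f'\|_{L^1(0,T)}<\infty$), so $H*(A*\f')=A*(H*\f')$ on $(0,T)$. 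Finally, because $\f\in W^{1,1}(0,T)$, its absolutely continuous representative satisfies $\f(t)-\f(0)=\int_0^t\f'(s)\,\ud s=(H*\f')(t)$ for all $t\in(0,T)$, i.e. $H*\f'=\f-\f(0)H$ on $(0,T)$. Substituting this into the previous identity yields $\int_0^t (A*\f')(s)\,\ud s=\bigl(A*(\f-\f(0)H)\bigr)(t)$, which is \eqref{al1}.

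There is no genuine obstacle here; the only point deserving explicit mention is the justification of the interchange of integrations, and if one prefers to avoid invoking associativity abstractly one can argue directly: write $(A*\f')(s)=\int_0^s A(s-\tau)\f'(\tau)\,\ud\tau$, apply Fubini on $\{0<\tau<s<t\}$ to obtain $\int_0^t(A*\f')(s)\,\ud s=\int_0^t\f'(\tau)\,\Phi(t-\tau)\,\ud\tau$ with $\Phi(r):=\int_0^r A(\sigma)\,\ud\sigma$, and then integrate by parts in $\tau$ (legitimate since $\Phi$ and $\f$ are absolutely continuous on $[0,t]$), using $\tfrac{\ud}{\ud\tau}\Phi(t-\tau)=-A(t-\tau)$ and $\Phi(0)=0$, to recover $\int_0^t A(t-\tau)\bigl[\f(\tau)-\f(0)\bigr]\,\ud\tau=\bigl(A*(\f-\f(0)H)\bigr)(t)$. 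Either route gives the claimed identity.
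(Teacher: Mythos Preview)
Your proof is correct and follows the paper's approach: the paper simply states that the identity is a direct consequence of Fubini's Theorem, and both of your arguments (associativity of causal convolution, or the explicit Fubini computation followed by integration by parts) are precisely detailed executions of that idea. You have supplied the routine details the paper omits.
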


\begin{proof}
The proof is a direct consequence of Fubini's Theorem.

%\begin{align}\label{al2}
%\int_0^t (A*\f')(s)\ud s & = \int_0^t \int_0^s A(\tau)\f'(s-\tau)\ud \tau \ud s = \int_0^t \int_\tau^t \f'(s-\tau) \ud s  A(\tau) \ud \tau\nonumber\\
%& = \int_0^t \left[\f(t-\tau)-\f(0) \right]A(\tau)\ud \tau  
%\end{align}
\end{proof}

Recall from \eqref{eel9} that $\left( f+\mathcal{G}-v_t\right) (x,0)=0$.  Then \eqref{sl30} can be re-written in the form

\begin{align}\label{al3}
& u_{xx}  = \nonumber\\
& \dfrac{1}{g'(0)} \left\{ \dfrac{f+\mathcal{G}-v_t}{a(0)} + A_1*\left(f+\mathcal{G} -v_{t}\right)
+ A_2*\left[\int_0^t f(x,s) \ud s + \int_0^t \mathcal{G}(x,s) \ud s-v+v_0 \right] \right\rbrace 
\end{align}

We deduce from the above equation that

\begin{align}\label{al3n}
& u_{xxx} =\dfrac{1}{g'(0)} \nonumber\\
&  \left\{\dfrac{f_x+\mathcal{G}_x-v_{xt}}{a(0)}   +A_1*\left(f_x+\mathcal{G}_x -v_{xt}\right)
+ A_2*\left[\int_0^t f_x(x,s) \ud s + \int_0^t \mathcal{G}_x(x,s) \ud s-v_x+v'_0 \right] \right\rbrace 
\end{align}

We can now prove the following:

\begin{lemma}\label{al4}
Assume the assumption formulated in \eqref{pr6} holds true.  Then

\begin{align}\label{al5}
& \displaystyle\mathop{\sup}_{0\leq s \leq t}\left\|u_{xx}(\cdot,s)\right\|^2_{L^2\left(\Omega \right) } \leq C \left\lbrace V_0+F+ \nu^2(t) \mathcal{E}(t) +\mathcal{E}^3(t)+\mathcal{E}_1(t)\right\rbrace     
\end{align}

and

\begin{align}\label{al5n}
& \displaystyle\mathop{\sup}_{0\leq s \leq t}\left\|u_{xxx}(\cdot,s)\right\|^2_{L^2\left(\Omega \right) } \leq C \left\lbrace V_0+F+ \nu^2(t) \mathcal{E}(t)+ \nu^2(t) \mathcal{E}^2(t) +\mathcal{E}^3(t)+\mathcal{E}_1(t)\right\rbrace     
\end{align}

where $C>0$ is a constant which is independent of $n$. 
\end{lemma}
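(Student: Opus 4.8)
The plan is to estimate the $L^2(\Omega)$-norms of $u_{xx}$ and $u_{xxx}$ directly from the explicit representations \eqref{al3} and \eqref{al3n}, bounding each term on the right-hand side by one of the quantities $V_0$, $F$, $\mathcal{E}_1(t)$, $\nu(t)\mathcal{E}(t)$ (or higher powers), using the $L^1$-convolution inequality \eqref{iq2}, the pointwise bounds on $\mathcal{G}$ and $\mathcal{G}_t$ from Lemma \ref{le}, and the fact that $\|A_1\|_{L^1(\mathbb{R}_+)}$, $\|A_2\|_{L^1(\mathbb{R}_+)}$ are finite and independent of $n$.

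\medskip

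For \eqref{al5}: I would take the $L^2(\Omega)$-norm of \eqref{al3} and handle the groups of terms separately. The ``linear'' terms $f+\mathcal{G}-v_t$ and $A_1*(f+\mathcal{G}-v_t)$ contribute $\|f(\cdot,t)\|_{L^2(\Omega)}^2$, $\|v_t(\cdot,t)\|_{L^2(\Omega)}^2$, $\sup_\tau\|f(\cdot,\tau)\|_{L^2}^2$, $\sup_\tau\|v_t(\cdot,\tau)\|_{L^2}^2$, all controlled by $F$ and $\mathcal{E}_1(t)$ (here I use Lemma \ref{adl2}(ii) for the convolution with $A_1$). For the $\mathcal{G}$-terms, Lemma \ref{le}(ii) gives $|\mathcal{G}(x,t)| \leq K\nu(t)(|v_{xx}(x,\cdot)|*\psi)(t)$, so by \eqref{iq2} and $\psi\in L^1(\mathbb{R}_+)$, $\|\mathcal{G}(\cdot,t)\|_{L^2(\Omega)}^2 \leq C\nu^2(t)\sup_\tau\|v_{xx}(\cdot,\tau)\|_{L^2}^2 \leq C\nu^2(t)\mathcal{E}(t)$; the convolutions $A_1*\mathcal{G}$ and $A_2*\int_0^t\mathcal{G}$ are handled the same way, absorbing an extra $\|A_i\|_{L^1}$. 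The $A_2$-group also produces $\big\|\int_0^t f(x,s)\,ds\big\|_{L^2(\Omega)}^2$ (bounded by $F$ via \eqref{msf}), $\|v(\cdot,t)\|_{L^2(\Omega)}^2$ ($\leq \mathcal{E}_1(t)$), and $\|v_0\|_{L^2(\Omega)}^2$ ($\leq V_0$). Combining everything yields \eqref{al5}, where the cubic term $\mathcal{E}^3(t)$ (or $\nu^2(t)\mathcal{E}(t)$ written in the weaker form $\mathcal{E}^3(t)$ via \eqref{pr4}) comes from the $\mathcal{G}$-contributions.

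\medskip

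For \eqref{al5n}: the strategy is identical, applied to \eqref{al3n}, but now one must control $\mathcal{G}_x$, which is more delicate. Differentiating \eqref{p6} in $x$ introduces a term with $g''\big(\bar v^t_x\big)\,\bar v^t_{xx}\,v_{xx}$-type structure; using Lemma \ref{le}(i) for $j=1,2$ and the decomposition as in \eqref{le9}, one gets $\|\mathcal{G}_x(\cdot,t)\|_{L^2(\Omega)} \leq C\nu(t)\big[\,\|v_{xx}(\cdot,\cdot)\|*\psi\,\big]$-type bounds plus a term involving $\|v_{xx}\|_{L^2}^2$ times $\sup_x|v_{xx}|$, which via Sobolev embedding is $\leq C\nu(t)\mathcal{E}(t)\cdot\sqrt{\mathcal{E}(t)}$, producing the extra $\nu^2(t)\mathcal{E}^2(t)$ term. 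The remaining pieces $f_x$, $v_{xt}$, $\int_0^t f_x$, $v_x$, $v_0'$ are bounded by $F$, $\mathcal{E}_1(t)$, $V_0$ exactly as before. This gives \eqref{al5n}.

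\medskip

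The main obstacle is the careful bookkeeping of $\mathcal{G}_x$: one must differentiate the integro-differential expression for $\mathcal{G}$ in $x$, track the appearance of $v_{xx}$ and $v_{xxx}$ (the latter ideally avoided or absorbed), and invoke the Sobolev inequality $\sup_{x}|w| \leq C_\Omega\|w\|_{H^1(\Omega)}$ together with Lemma \ref{le} to close the estimate purely in terms of $\mathcal{E}(t)$, $\mathcal{E}_1(t)$, $\nu(t)$, $V_0$, $F$ — being vigilant that no term survives with a coefficient that is not small when $\mathcal{E}(t)$ is small, so that the bound feeds into the bootstrap \eqref{expl1}. All convolution-in-$t$ estimates, however, are routine applications of \eqref{iq2} and Lemma \ref{adl2}.
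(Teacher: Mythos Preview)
Your overall plan---take the $L^2(\Omega)$-norm of \eqref{al3} and \eqref{al3n}, then bound each piece---is the paper's plan too, and your treatment of the terms $f$, $v_t$, $v$, $v_0$, $A_1*(f+\mathcal{G}-v_t)$, and $\mathcal{G}(\cdot,t)$ itself is correct. But there is a genuine gap in your handling of the term $A_2*\big[\int_0^t\mathcal{G}(\cdot,s)\,\ud s\big]$ (and the analogous $\int_0^t\mathcal{G}_x$ term for \eqref{al5n}).

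You write that $A_2*\int_0^t\mathcal{G}$ is ``handled the same way'' as $\mathcal{G}$, via Lemma~\ref{le}(ii). This does not work: Lemma~\ref{le}(ii) gives only $|\mathcal{G}(x,s)|\leq K\nu(s)\,(|v_{xx}|*\psi)(s)$, and integrating this over $s\in[0,t]$ produces (after Fubini and Minkowski) a bound of the form $C\nu(t)\|\psi\|_{L^1}\int_0^t\|v_{xx}(\cdot,\sigma)\|_{L^2(\Omega)}\,\ud\sigma$, which carries an uncontrolled factor $\sqrt{t}$ after Cauchy--Schwarz. No uniform-in-$t$ estimate follows. Relatedly, your remark that $\mathcal{E}^3(t)$ is ``$\nu^2(t)\mathcal{E}(t)$ written in the weaker form'' is off: \eqref{pr4} gives $\nu^2(t)\mathcal{E}(t)\leq C\mathcal{E}^2(t)$, not $\mathcal{E}^3(t)$. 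The cubic term has a different origin.

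What the paper does for $\int_0^t\mathcal{G}\,\ud s$ is to go back to \eqref{p6} and use the \emph{quadratic} bound \eqref{ap7}, $|g'(y)-g'(0)|\leq Ky^2$ (this is where hypothesis $(g_2)$, namely $g''(0)=0$, is essential). One obtains
\[
\Big\|\int_0^t\mathcal{G}(\cdot,s)\,\ud s\Big\|_{L^2(\Omega)}
\leq C\sup_\sigma\|u_{xx}(\cdot,\sigma)\|_{L^2(\Omega)}\int_0^{+\infty}|a'(\tau)|\int_0^t\|\bar v^s_x(\cdot,\tau)\|_{L^\infty(\Omega)}^2\,\ud s\,\ud\tau,
\]
and then controls $\int_0^t\|\bar v^s_x(\cdot,\tau)\|_{L^\infty}^2\,\ud s$ by $C\tau^2\int_0^t\|v(\cdot,s)\|_{H^2}^2\,\ud s\leq C\tau^2\mathcal{E}(t)$ via the Hardy--Littlewood maximal inequality (see \eqref{al10}--\eqref{al14}). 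The remaining $\tau$-integral $\int_0^\infty\tau^2|a'(\tau)|\,\ud\tau$ is finite by hypothesis $(a_3)$. This yields $\|\int_0^t\mathcal{G}\|_{L^2}\leq C\mathcal{E}^{3/2}(t)$, whose square is the $\mathcal{E}^3(t)$ in \eqref{al5}. The same mechanism (quadratic bound + maximal function + $(a_3)$) is needed for $\int_0^t\mathcal{G}_x$ in \eqref{al5n}; your sketch does not supply it.
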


\begin{proof}
The proof is performed in two steps.

\textbf{Step 1.}

Here we obtain the necessary estimates for $\mathcal{G}(t)$, $\displaystyle\int_0^t \mathcal{G}(s)\ud s$, $\mathcal{G}_x(t)$ and for $\displaystyle\int_0^t \mathcal{G}_x(s)\ud s$.  Using \eqref{p6} and part (i) of Lemma \ref{le} we have

\beq \label{al6}
\left|\mathcal{G}(t)\right|\leq K \nu(t) \int_0^{+\infty} \left|a'(s)\right| r_0(s) \left|u_{xx}(x,t)-u_{xx}(x,t-s)\right|\ud s
\eeq

and this gives

\beq \label{al7}
\left\|\mathcal{G}(\cdot,t)\right\|_{L^2\left(\Omega \right) }\leq 2 K \nu(t) \int_0^{+\infty} \left|a'(s)\right| r_0(s) \ud s \left( \displaystyle\mathop{\sup}_{0\leq s \leq t} \left\|u_{xx}(\cdot,s)\right\|_{L^2\left(\Omega \right) }\right) \leq C \nu(t) \sqrt{\mathcal{E}(t)}
\eeq

On the other hand, using \eqref{p6} and \eqref{ap7}, we have that

\beq \label{al8}
\left|\int_0^t \mathcal{G}(x,s)\ud s \right|\leq K \int_0^{+\infty} \left|a'(\tau)\right|\int_0^t \left|\overline{v}^s_x(x,\tau)\right|^2 \left|u_{xx}(x,s)-u_{xx}(x,s-\tau)\right|\ud s \ud \tau 
\eeq 

which implies, taking the ${L^2\left(\Omega \right) }$-norm, that

\begin{align}\label{al9}
\left\|\int_0^t \mathcal{G}(\cdot,s)\ud s \right\|_{L^2\left(\Omega \right) }  \leq & 2K \left( \displaystyle\mathop{\sup}_{0\leq \tau \leq t} \left\|u_{xx}(\cdot,\tau)\right\|_{L^2\left(\Omega \right) }\right)\nonumber\\
& \int_0^{+\infty} \left|a'(\tau)\right| \int_0^t \left\| \overline{v}^s_x(\cdot,\tau)\right\|^2_{L^\infty\left(\Omega\right) }  \ud s  \ud\tau
\end{align}

Now we have by Sobolev inclusions:

\beq \label{al10}
\left\|\overline{v}^s_x(\cdot,\tau)\right\|_{L^\infty\left(\Omega\right) } \leq C \int_{s-\tau}^s \left\| v(\cdot,\lambda)\right\|_{H^2\left(\Omega\right) }\ud\lambda \leq 2C   \tau \mathcal{M} \left( \left\| \tilde{v}\right\|_{H^2\left(\Omega\right) } \right) (s)
\eeq

where $ \tilde{v} (x,s)$ is the function defined on $\Omega\times \mathbb{R}$ by 

\begin{equation}\label{al11}
\tilde{v} (x,s)=\begin{cases}
v(x,s) & \textrm{for $s\in[0,t)$}\\
0 & \textrm{for $s\in\mathbb{R}-[0,t)$}                   
                  \end{cases}
\end{equation}

and 

\beq\label{al12}
\mathcal{M}\left(\left\|\tilde{v}\right\|_{H^2\left(\Omega\right) } \right) (s)=\displaystyle\mathop{\sup}_{\rho>0}\dfrac{1}{2\rho}\int_{s-\rho}^{s+\rho} \left\|\tilde{v}(\cdot,\tau)\right\|_{H^2\left(\Omega\right) } \ud\tau 
\eeq

is the maximal function of $s\mapsto\displaystyle\left\|\tilde{v}(\cdot,s)\right\|_{H^2\left(\Omega\right) }$ (see \cite{ste1}).  

Now, the maximal inequality (see Theorem 1, page 5 in \cite{ste1}) in this case leads  to 

\begin{align}\label{al13}
\int_{\mathbb{R}}  \mathcal{M}\left( \left\|\tilde{v}(\cdot,s)\right\|^2_{H^2\left(\Omega\right) } \right)(s)\ud s & \leq 2\sqrt{10}   \int_{\mathbb{R}}  \left\|\tilde{v}(\cdot,s)\right\|^2_{H^2\left(\Omega\right) } (x,s)\ud s \nonumber\\
& = 2\sqrt{10} \int_0^t \left\| v (\cdot,s)\right\|^2_{H^2\left(\Omega\right) } (x,s)\ud s
\end{align}

%where $c_1>0$ is a constant.  

Then,  from \eqref{al10} and \eqref{al13} by Sobolev inclusions we have that:

\beq\label{al14}
\int_0^t \left\|\overline{v}_x^s(\cdot,\tau)\right\|_{L^\infty\left(\Omega\right)} ^2\ud\tau  \leq C \tau^2 \int_0^t \|v(\cdot,s)\|^2_{H^2(\Omega)}
\eeq

Next, with the help of \eqref{al9} we deduce

\begin{align}\label{al15}
\left\|\int_0^t \mathcal{G}(\cdot,s)\ud s \right\|_{L^2\left(\Omega \right) } \leq C K \displaystyle\mathop{\sup}_{0\leq \tau \leq t} \left\|u_{xx}(\cdot,\tau)\right\|_{L^2\left(\Omega \right) } \int_0^t \left\|v(\cdot,s)\right\|^2_{H^2(\Omega)}\ud s \int_0^{+\infty} \left|a'(\tau)\right| \tau^2  \ud\tau
\end{align}

that is 

\beq\label{al15n}
\left\| \int_0^t \mathcal{G}(\cdot,s)\ud s \right\|_{L^2\left(\Omega \right) } \leq C \mathcal{E}^{3/2}(t) 
\eeq

Next, let $\mathcal{G}_x(x,t)=I_1+I_2$, where

\beq\label{nal1}
I_1=\int_0^{+\infty}a'(s) g''\left(\overline{v}^t_x(s) \right)\left|\overline{v}^t_{xx}(s) \right|^2\ud s   
\eeq

\beq\label{nal2}
I_2=\int_0^{+\infty}a'(s)\left[g'\left(\overline{v}^t_x(s) \right)-g'(0) \right] \overline{v}^t_{xxx}(s)\ud s   
\eeq

and also $\displaystyle \int_0^t \mathcal{G}_x(x,s)\ud s =I_3+I_4$, where

\beq\label{nal3}
I_3= \int_0^t \int_0^{+\infty}a'(\tau) g''\left(\overline{v}^s_x(\tau) \right)\left|\overline{v}^s_{xx}(\tau) \right|^2\ud\tau \ud s 
\eeq

\beq\label{nal4}
I_4=\int_0^t \int_0^{+\infty}a'(\tau)\left[g'\left(\overline{v}^s_x(\tau) \right)-g'(0) \right] \overline{v}^s_{xxx}(\tau)\ud\tau\ud s   
\eeq

Since $\overline{v}^t(s)=u(t)-u(t-s)$, using again part (i) in Lemma \ref{le} we obtain

\begin{align}\label{nal5}
\left\|I_1\right\|_{L^2\left(\Omega \right) } & \leq 2K \nu(t)\int_0^{+\infty} \left|a'(s)\right|r_0(s)\left[\left\|u^2_{xx}(\cdot,t) \right\|_{L^2\left(\Omega \right) }+\left\|u^2_{xx}(\cdot,t-s) \right\|_{L^2\left(\Omega \right) }  \right] \ud s\nonumber\\
& \leq  4K \nu(t) \displaystyle\mathop{\sup}_{0\leq s \leq t} \left\|u_{xx}(\cdot,s)\right\|^2_{L^4\left(\Omega \right) }\int_0^{+\infty}\left|a'(s)\right|r_0(s)\ud s 
\end{align}

This gives further down by Sobolev inclusion:

\begin{align}\label{nal6}
\left\|I_1\right\|_{L^2\left(\Omega \right) } & \leq 4 K \left( \int_0^{+\infty} \left|a'(s)\right|r_0(s)\ud s\right) \nu(t) \mathcal{E}(t) 
\end{align}

Next, as in \eqref{al7}, one easily obtains that

\begin{align}\label{nal7}
\left\|I_2\right\|_{L^2\left(\Omega \right) } & \leq 2 K \left( \int_0^{+\infty} \left|a'(s)\right|r_0(s)\ud s\right) \nu(t) \sqrt{\mathcal{E}(t)} 
\end{align}

Moreover,

\begin{align}\label{nal9}
& \left\|I_3\right\|_{L^2\left(\Omega \right) } \leq \nonumber\\
&  K \int_0^t \int_0^{+\infty} \left|a'(\tau)\right| \left\|\overline{v}^s_{x}(\cdot,\tau) \right\|_{L^\infty\left(\Omega \right) }  \left\|u_{xx}(\cdot,s)-u_{xx}(\cdot,s-\tau) \right\|_{L^\infty\left(\Omega \right) }\left\|  \overline{v}^s_{xx}(\cdot,\tau)\right\|_{L^2\left(\Omega \right) }    \ud\tau\ud s
\end{align}

As in the proof of \eqref{al5} we have the following estimates:

\[ \left\|\overline{v}^s_{x}( \tau) \right\|_{L^\infty\left(\Omega \right) }\leq  2 \tau \mathcal{M} \left( \left\|\tilde{v}_{x}  \right\|_{L^\infty\left(\Omega \right) }\right) (s) \]

\[ \left\|\overline{v}^s_{xx}( \tau) \right\|_{L^2\left(\Omega \right) }\leq  2 \tau \mathcal{M}\left(  \left\|\tilde{v}_{xx}  \right\|_{L^2\left(\Omega \right) }\right) (s) \]

which give

\begin{align}\label{nal10}
\left\|I_3\right\|_{L^2\left(\Omega \right) } & \leq 8K \displaystyle\mathop{\sup}_{0\leq s \leq t} \left\|u_{xx}(\cdot,s)\right\|_{L^\infty\left(\Omega \right) } \int_0^{+\infty}  \left|a'(\tau)\right|\tau^2\ud\tau\nonumber\\
& \sqrt{\int_0^t \mathcal{M}   \left( \left\|\tilde{v}_{x}\right\|_{L^\infty\left(\Omega \right) }\right)^2 (s)\ud s} \sqrt{\int_0^t \mathcal{M}   \left( \left\|\tilde{v}_{xx}\right\|_{L^2\left(\Omega \right) }\right)^2(s)\ud s}
\end{align}

Using again the maximal inequality from \cite{ste1} and the Sobolev embeddings leads to

\begin{align}\label{nal11}
& \left\|I_3\right\|_{L^2\left(\Omega \right) } \leq C  \displaystyle\mathop{\sup}_{0\leq s \leq t} \left\|u (\cdot,s)\right\|_{H^3\left(\Omega \right) }\int_0^t \left\|v (\cdot,s)\right\|^2_{H^2\left(\Omega \right) }\ud s
\end{align}

that is

\begin{align}\label{nal12}
\left\|I_3\right\|_{L^2\left(\Omega \right) } \leq C \mathcal{E}^{3/2}(t)
\end{align}

Finally, for $I_4$ we proceed as for obtaining \eqref{al15n} and get 

\begin{align}\label{nal13}
\left\|I_4\right\|_{L^2\left(\Omega \right) } \leq C \mathcal{E}^{3/2}(t) 
\end{align}

The above estimates lead to the below ones:

\begin{align}\label{nal14}
\left\|\mathcal{G}_x(\cdot,t)\right\|_{L^2\left(\Omega \right) } \leq C \nu(t)\left(  \mathcal{E}(t)+\sqrt{\mathcal{E}(t)}\right) 
\end{align}

\begin{align}\label{nal15}
\left\|\int_0^t \mathcal{G}_x(\cdot,s)\ud s\right\|_{L^2\left(\Omega \right) } \leq C  \mathcal{E}^{3/2}(t) 
\end{align}

\textbf{Step 2.}

From \eqref{al3} we obtain:

\begin{align}\label{al17}
& \left\|u_{xx}(\cdot,t)\right\|_{L^2\left(\Omega \right) } \leq  \dfrac{1}{|g'(0)|} \bigg\{ \dfrac{1}{a(0)}\left[\|f(\cdot,t)\|_{L^2\left(\Omega \right) } + \left\|\mathcal{G}(\cdot,t)\right\|_{L^2\left(\Omega \right) } +\left\|v_t(\cdot,t)\right\|_{L^2\left(\Omega \right) }  \right] \nonumber\\
& + \|A_1\|_{L^1\left(\mathbb{R}_+ \right)} \displaystyle\mathop{\sup}_{0\leq s \leq t}\left[ \|f(\cdot,s)\|_{L^2\left(\Omega \right) } + \left\|\mathcal{G}(\cdot,s)\right\|_{L^2\left(\Omega \right) } + \left\|v_t(\cdot,s)\right\|_{L^2\left(\Omega \right) }\right] \nonumber\\
& + \|A_2\|_{L^1\left(\mathbb{R}_+ \right)} \displaystyle\mathop{\sup}_{0\leq s \leq t}\left[ \left\| \int_0^s f(\cdot,\tau)\ud\tau\right\|_{L^2\left(\Omega \right) } + \left\|\int_0^s \mathcal{G}(\cdot,\tau)\ud\tau\right\|_{L^2\left(\Omega \right) } + \left\|v(\cdot,s)\right\|_{L^2\left(\Omega \right) } +\left\|v_0\right\|_{L^2\left(\Omega \right) } \right] \bigg\}
\end{align}

Using now \eqref{al7} and \eqref{al15n} and the fact that $\nu(t)$ and $\mathcal{E}(t)$ are increasing functions we obtain \eqref{al5}.  Next,\eqref{al5n} is obtained in a similar manner: one produces an equality  like that of \eqref{al17} satisfied by $\displaystyle\left\|u_{xxx}(\cdot,t)\right\|_{L^2\left(\Omega \right) }$ with $f_x$, $\mathcal{G}_x$, $v_{tx}$, $v_x$, $v'_0$ in place of $f$, $\mathcal{G}$, $v_{t}$, $v$, $v_0$.  Using \eqref{nal14} and \eqref{nal15} we get \eqref{al5n}.  

\end{proof}

%%%%%%%%%%%%%%%%%%%%%%%%%%%%%%%%%%
%%%%%%%%%%%%%%%%%%%%%%%%%%%%%%%%%%
\subsection{Smallness estimates.}
%%%%%%%%%%%%%%%%%%%%%%%%%%%%%%%%%%%
%%%%%%%%%%%%%%%%%%%%%%%%%%%%%%%%%%%

The next Proposition proves the uniform boundedness of $\mathcal{E}(t)$.

\begin{proposition}\label{smr}
There exist two numbers $\overline{\mathcal{E}}>0$ and $\delta>0$ independent of $n$ such that, whenever $v_0$ and $f$ verify $F(f)+V_0(v_0)\leq \delta$, one has

\beq\label{smr1}
\mathcal{E}(t) \leq \dfrac{\overline{\mathcal{E}}}{2},\,\forall t\in[0,T)
%,\, \displaystyle\mathop{\sup}_{0\leq \tau\leq t}\left\|u_{xx}(\cdot,\tau)\right\|_{L^2\left(\Omega \right) }\leq \dfrac{\theta}{4 c_\Omega}
\eeq 
\end{proposition}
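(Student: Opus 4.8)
The plan is to run a continuity (bootstrap) argument on the map $t\mapsto\mathcal E(t)$, using as input the estimates of Lemmas \ref{lse}, \ref{sl} and \ref{al4} together with the Sobolev bound \eqref{pr4}.

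First I would assemble those estimates into one closed inequality for $\mathcal E(t)$, valid on any interval where the smallness condition \eqref{pr6} holds. The order of substitution matters: one starts from the bound for $\mathcal E_1(t)$ furnished by Lemma \ref{lse}, then feeds it into Lemma \ref{sl} (which then controls $\sup_s\int_\Omega v_{xx}^2$, $\int_{Q_t}v_{xx}^2$ and $\int_{Q_t}v_{tt}^2$) and into Lemma \ref{al4} (which controls $\sup_s\int_\Omega u_{xx}^2$ and $\sup_s\int_\Omega u_{xxx}^2$), instead of the trivial bound $\mathcal E_1\le\mathcal E$. The only two remaining terms of $\mathcal E(t)$, namely $\sup_s\int_\Omega u^2$ and $\sup_s\int_\Omega u_x^2$, are handled by the Poincar\'e inequality on the bounded interval $\Omega$: since $u(\cdot,t)\in H^1_0(\Omega)$ and since $\int_\Omega u_x\,\ud x=0$ (because $u$ vanishes on $\partial\Omega$), one has $\|u(\cdot,t)\|_{L^2(\Omega)}+\|u_x(\cdot,t)\|_{L^2(\Omega)}\le C\,\|u_{xx}(\cdot,t)\|_{L^2(\Omega)}$. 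Replacing finally every occurrence of $\nu(t)$ by $C_\Omega\sqrt{\mathcal E(t)}$ via \eqref{pr4}, I expect to reach an inequality of the shape
\begin{equation}\label{smr-plan}
\mathcal E(t)\le C_0(V_0+F)+C_0\bigl(\sqrt{V_0}+\sqrt{F}\bigr)\sqrt{\mathcal E(t)}+C_0\sqrt{V_0}\,\mathcal E(t)+C_0\bigl(\mathcal E(t)^{3/2}+\mathcal E(t)^{2}+\mathcal E(t)^{5/2}+\mathcal E(t)^{3}\bigr),
\end{equation}
with $C_0>0$ independent of $n$, valid whenever $\mathcal E(t)\le\theta^2/(4C_\Omega^2)$.

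Second, I would absorb the $\mathcal E(t)$-dependent terms on the right of \eqref{smr-plan}. Young's inequality turns the cross term into $\tfrac14\mathcal E(t)+C_1(V_0+F)$. Then I fix $\overline{\mathcal E}>0$ small enough that simultaneously $\overline{\mathcal E}\le\theta^2/(4C_\Omega^2)$ (so that $\mathcal E(t)\le\overline{\mathcal E}$ forces \eqref{pr6}) and $C_0\bigl(\overline{\mathcal E}^{1/2}+\overline{\mathcal E}+\overline{\mathcal E}^{3/2}+\overline{\mathcal E}^{2}\bigr)\le\tfrac14$; and afterwards I fix $\delta>0$ small enough that $C_0\sqrt{\delta}\le\tfrac14$ and $8C_1\delta\le\overline{\mathcal E}$. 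With these choices, whenever $F(f)+V_0(v_0)\le\delta$ and $\mathcal E(t)\le\overline{\mathcal E}$, inequality \eqref{smr-plan} gives $\mathcal E(t)\le C_1(V_0+F)+\tfrac34\mathcal E(t)$, hence $\mathcal E(t)\le 4C_1\delta\le\overline{\mathcal E}/2$. The seemingly dangerous linear term $C_0\sqrt{V_0}\,\mathcal E(t)$ is harmless precisely because its coefficient $\sqrt{V_0}\le\sqrt{\delta}$ is small.

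Finally I would close the argument by continuity. By Proposition \ref{pr} the map $t\mapsto\mathcal E(t)$ is continuous and non-decreasing on $[0,T)$, and at $t=0^+$ one computes $v_t(\cdot,0)=f(\cdot,0)$, $v_{xt}(\cdot,0)=f_x(\cdot,0)$, $v_{xx}(\cdot,0)=v_0''$, $v_{tt}(\cdot,0)=-g'(0)a(0)v_0''+f_t(\cdot,0)$ (from \eqref{eel5} together with $\mathcal G_t(\cdot,0)=0$) and $u(\cdot,0)=u_x(\cdot,0)=u_{xx}(\cdot,0)=u_{xxx}(\cdot,0)=0$, so that $\lim_{t\to0^+}\mathcal E(t)\le C(V_0+F)\le C\delta<\overline{\mathcal E}/2$ once $\delta$ is small. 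Set $T^{\ast}=\sup\{\tau\in(0,T):\mathcal E(t)\le\overline{\mathcal E}\ \text{for all}\ t\in[0,\tau)\}$; by the above $T^{\ast}>0$, and on $[0,T^{\ast})$ condition \eqref{pr6} holds so Steps 1--2 yield $\mathcal E(t)\le\overline{\mathcal E}/2$ there. If $T^{\ast}<T$, continuity forces $\mathcal E(T^{\ast})\le\overline{\mathcal E}/2<\overline{\mathcal E}$, hence $\mathcal E\le\overline{\mathcal E}$ on some $[0,T^{\ast}+\varepsilon)$, contradicting the maximality of $T^{\ast}$; therefore $T^{\ast}=T$, which is exactly \eqref{smr1}. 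The main obstacle is Step 1: chaining the three lemmas in the right order and verifying that every term produced is either carried by a small-data prefactor $\sqrt{V_0+F}$, or by the small constant $\sqrt{V_0}$, or is of order $\mathcal E(t)^{3/2}$ or higher so that it can be absorbed after $\overline{\mathcal E}$ is chosen small; once \eqref{smr-plan} is in hand the bootstrap is routine.
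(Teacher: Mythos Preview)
Your proposal is correct and follows essentially the same route as the paper: combine Lemmas \ref{lse}, \ref{sl}, \ref{al4} with \eqref{pr4} into a closed inequality for $\mathcal E(t)$ under the bootstrap hypothesis \eqref{pr6}, absorb the superlinear terms by choosing $\overline{\mathcal E}$ small and the $\sqrt{V_0}\,\mathcal E$ term by choosing $\delta$ small, control $\mathcal E(0^+)$ from the initial values $v_t(\cdot,0)=f(\cdot,0)$, $v_{tt}(\cdot,0)=-g'(0)a(0)v_0''+f_t(\cdot,0)$, and close by continuity. Your handling of the $u,u_x$ terms via Poincar\'e is exactly the paper's remark that $\|w_{xx}\|_{L^2}$ is an equivalent norm on $H^2\cap H^1_0$, and your explicit list of powers $\mathcal E^{3/2},\mathcal E^2,\mathcal E^{5/2},\mathcal E^3$ is in fact cleaner than the paper's somewhat casual collapse to a single $\mathcal E^3$ in \eqref{smr4n}.
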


\begin{proof}
Remark first that, capitalizing on \eqref{eel5} and \eqref{pr5}, one has $v_t(x,0)=f(x,0)$, $v_{xt}(x,0)=f_x(x,0)$,   $v_{tt}(x,0)=-g'(0)a(0)v''_0(x)+f_t(x,0)$.  From the definition of $\mathcal{E}(t)$ we deduce 

\begin{align}\label{smr2}
\mathcal{E}(0)  \leq \left[1+2a^2(0)\left|g'(0)\right|^2  \right] \left\|v_{0}\right\|^2_{H^2\left(\Omega \right) }
+ \int_\Omega \left[f^2(x,0)+f^2_x(x,0)+2f^2_t(x,0) \right] \ud x
\end{align}

Therefore

\beq\label{smr3}
\mathcal{E}(0)  \leq 2 \left[1+a^2(0) \left|g'(0)\right|^2\right]\left(F+V_0 \right)   
\eeq

We now use the fact that the seminorm $w\in H^2\left(\Omega \right) \mapsto \left\|w_{xx}\right\|_{L^2\left(\Omega \right) }$ is a norm on $H^2\left(\Omega \right)\cap H^1_0\left(\Omega \right)$, equivalent to the usual norm in $H^2\left(\Omega \right)$.  We shall as well make use of the inequality $\left(\sqrt{V_0}+\sqrt{F} \right)  \sqrt{\mathcal{E}(t)}\leq \eta\mathcal{E}(t)+\dfrac{1}{2\eta}(V_0+F)$, with $\eta>0$ small enough. 

From Lemmas \ref{lse}, \ref{sl} and \ref{al4} we deduce 

\beq \label{smr4}
\mathcal{E}(t)  \leq C \left\lbrace V_0+F+\left[\nu(t)+\nu^3(t) \right]\mathcal{E}(t)+\sqrt{V_0}\mathcal{E}(t)+\mathcal{E}^3(t)+\nu^2(t)\mathcal{E}^2(t)  \right\rbrace 
\eeq

provided \eqref{pr6} holds true.  

%Next there exists a constant $c_\Omega>0$ s.t.:

%\beq\label{smr6}
%\left\|u_{x}(t)\right\|_{L^\infty\left(\Omega \right) } \leq c_\Omega\left\|u(t)\right\|_{H^2\left(\Omega \right) } \leq c_\Omega \sqrt{\mathcal{E}(t)}
%\eeq

Recall also the inequality \eqref{pr4}:

\beq\label{smr7}
\nu(t)\leq c_\Omega \sqrt{\mathcal{E}(t)},\, \forall  t\in [0,T)
\eeq 

%For simplicity we took the same constant in \eqref{smr6} and \eqref{smr7}. 
Then, we deduce from \eqref{smr4} that  

%\beq\label{neq1}
%\displaystyle 2\mathop{\sup}_{0\leq s \leq t}\left\|u_{x}(\cdot,s)\right\|_{L^\infty\left(\Omega \right) } <  \theta
%\eeq

%one has

\beq \label{smr4n}
\mathcal{E}(t)  \leq c_1 \left[ V_0+F+\mathcal{E}^3(t)\right]
\eeq

with $c_1>0$ a constant independent of $n$.  

Now observe that we can choose $\overline{\mathcal{E}}>0$ and $\delta>0$ such that

\beq\label{smr8}
\begin{cases}
c_1   \overline{\mathcal{E}}^{2}  \leq \dfrac{1}{2} \\*[2ex]
\overline{\mathcal{E}}<\dfrac{\theta^2}{4 C_\Omega} \\*[2ex]
c_1 \delta \leq \dfrac{\overline{\mathcal{E}}}{4}\\*[2ex] 
2\left[1+a^2(0)\left|g'(0)\right|^2  \right]\delta \leq \dfrac{\overline{\mathcal{E} } }{2}\\ 
%M2 c_\Omega\sqrt{\overline{\mathcal{E} }}<\theta
\end{cases}
\eeq

Let us now prove that, for any $t\in[0,T)$, \eqref{smr1} holds true.  Indeed, if the contrary were true, then invoking the continuity w.r.t. time there exists $t_2\in (0,T)$ s.t. $\mathcal{E}(t)\leq \overline{\mathcal{E} }$, for any $t\in(0,t_2)$, but  inequality  \eqref{smr1} is false on an interval $(t_1,t_2)$ with $0<t_1<t_2$.  From the second inequality in \eqref{smr8} we deduce that \eqref{smr4n} is satisfied on $[0,t_2]$.  Using once more \eqref{smr8} one gets $\displaystyle \mathcal{E}(t)\leq \dfrac{\mathcal{E}(t)}{2}+\dfrac{\overline{\mathcal{E}}}{4}$ which triggers $\displaystyle \mathcal{E}(t)\leq \dfrac{\overline{\mathcal{E}}}{2}$ on $[0,t_2]$, hence a contradiction.  This later fact ends the proof.

\end{proof}

%%%%%%%%%%%%%%%%%%%%%%%%%%%%%%%%%%%%%
%%%%%%%%%%%%%%%%%%%%%%%%%%%%%%%%%%%%%
\section{Proof of the main result.}\label{pmrs}
%%%%%%%%%%%%%%%%%%%%%%%%%%%%%%%%%%%%%%
%%%%%%%%%%%%%%%%%%%%%%%%%%%%%%%%%%%%%%

Remark that from Proposition \ref{smr} we actually deduce that for $v_n$ -  solution of $(P_n)_1$, $(P_n)_2$, $(P_n)_3$   - we have the following upper bounds:

\begin{align}\label{ub1}
& \displaystyle \mathop{\sup}_{t\in[0,T_n)} \left[\|u_n(\cdot, t)\|^2_{H^3(\Omega)} +
\|(u_n)_t(\cdot, t)\|^2_{H^2(\Omega)} + \|(u_n)_{tt}(\cdot, t)\|^2_{H^1(\Omega)} +
\|(u_n)_{ttt}(\cdot, t)\|^2_{L^2(\Omega)} \right] \nonumber\\
& + \int_0^{T_n}\left\lbrace \|v_n(\cdot, t)\|^2_{H^2(\Omega)} + 
\|(v_n)_t (\cdot, t)\|^2_{H^1(\Omega)} + \|(v_n)_{tt} (\cdot, t)\|^2_{L^2(\Omega)} 
\right\rbrace \ud t \leq \dfrac{\overline{\mathcal{E}}}{2}
\end{align}

%\begin{align}\label{ub1}
%& \displaystyle \mathop{\sup}_{t\in[0,T_n)}  \int_\Omega \left\lbrace v_n^2+ 
%\left[\left( v_n\right)_x\right]^2 +\left[\left( v_n\right)_t\right]^2 +
%\left[\left( v_n\right)_{xx}\right]^2 +\left[\left( v_n\right)_{tt}\right]^2 +
%\left[\left( v_n\right)_{xt}\right]^2  \right\rbrace \ud x \nonumber\\
%& + \int_0^{T_n}\int_\Omega  \left\lbrace v_n^2+ \left[\left( v_n\right)_x\right]^2 +
%\left[\left( v_n\right)_t\right]^2 +\left[\left( v_n\right)_{xx}\right]^2 +\left[
%\left( v_n\right)_{tt}\right]^2 +\left[\left( v_n\right)_{xt}\right]^2  
%\right\rbrace \ud x \ud t \leq \dfrac{\overline{\epsilon}}{2}
%\end{align}

and 

\beq\label{ub2}
\sup_{\substack{x\in\Omega\\0<s<t<T_n}}\left|\int_{t-s}^t \left( v_n\right)_x(x,\tau)\ud \tau \right|\leq \theta
\eeq

We then deduce from Proposition \ref{pr} that $T_n=+\infty$, so \eqref{ub1} and \eqref{ub2} are 
valid upon replacing $T_n$ by $+\infty$.  It follows that there exist 
two limits 
$$
\displaystyle u \in \bigcap_{m = 0}^3 W^{m, \infty} \left( (0, + \infty) ; H^{3-m}(\Omega) \right)
   $$

and
 
$$
\displaystyle v \in \left\lbrace \bigcap_{m = 0}^2 W^{m, \infty} \left( (0, + \infty) ; H^{2-m}(\Omega) \right)\right\rbrace 
\cap \left\lbrace \bigcap_{m = 0}^2 W^{m, 2} \left( (0, + \infty) ; H^{2-m}(\Omega) \right)\right\rbrace 
   $$

with $\displaystyle u(x,t) = \int_0^t v(x,s) \, ds $ 
%a limit $v\in H^2\left(\Omega\times (0,+\infty) \right) $ 
s.t. (up to a subsequence of $n$) we have 
$$
\frac {d^m u_n} {d t^m} \rightharpoonup \frac {d^m u} {d t^m} \quad \text{weakly} * \; \; \text{in}
\; \; L^\infty \left( (0, + \infty) ; H^{3-m}(\Omega) \right), \quad m = 0, 1, 2, 3
   $$
%$\displaystyle  v_n\xrightharpoonup[n\to+\infty]{H^2\left(\Omega\times (0,+\infty) \right)}v$ weakly.
and
$$
\frac {d^m v_n} {d t^m} \rightharpoonup \frac {d^m v} {d t^m} \quad \text{weakly}  \; \; \text{in}
\; \; L^2 \left( (0, + \infty) ; H^{2-m}(\Omega) \right), \quad m = 0,1,2.
   $$
By the trace theorem we have $v=0$ for $x\in\partial\Omega$, $t\geq0$, and $v(x,0)=v_0(x)$, 
for $x\in\Omega$. 
Now remark that the equation $(P_n)_1$ can be written in the form

%Next, it is leftover to prove that $v$ satisfies \eqref{ap2}, which can be 
%written in the form 

\begin{align}\label{ub3} 
(v_n)_t (x,t)=-\dfrac{\partial}{\partial x}\int_0^t a_n(t-s) g'\left((u_n)_x(x,t) - (u_n)_x(x,s)
\right) (v_n)_x(x,s)\ud s + f(x,t) 
\end{align}

We now pass to the limit in \eqref{ub3} above, for any fixed $t\geq0$.  By the trace theorem it is 
clear that  $\displaystyle (v_n)_t(\cdot,t) \xrightarrow[n\to+\infty]{L^2\left(\Omega \right)} 
v_t(\cdot,t)$ weakly.  Next, we take on to proving that 

$$\displaystyle \int_0^t a_n(t-s)g'\left( (u_n)_x(x,t) - (u_n)_x (x,s) \right) (v_n)_x(x,s)\ud s$$ 

weakly converges in $L^2\left(\Omega \right)$ towards  

$$\displaystyle \int_0^t a(t-s)g'\left(u_x(x,t) - u_x(x,s) \right) v_x(x,s)\ud s$$

Let $\phi\in L^2\left(\Omega \right)$ be fixed; we have to prove that

\beq\label{ub4}
E_n \displaystyle \xrightarrow[n\to+\infty]{\,}E
\eeq

where

\beq\label{ub5}  
E_n=\int_{Q_t} \phi(x) a_n(t-s)g'\left( (u_n)_x(x,t) - (u_n)_x(x,s) \right) (v_n)_x(x,s)\ud x \ud s
\eeq

\beq\label{ub6}  
E= \int_{Q_t} \phi(x) a(t-s)g'\left( u_x(x,t) - u_x(x,s) \right) v_x(x,s)\ud x \ud s
\eeq

By Sobolev compact inclusion we have that $\displaystyle (u_n)_x \xrightarrow[n\to+\infty]
{C\left(\overline{Q_t} \right)} u_x$ \ strongly  
\\
and 
$\displaystyle (u_n)_x(\cdot, t) \xrightarrow[n\to+\infty]
{C\left(\overline{\Omega} \right)} u_x(\cdot, t)$ \ also strongly. 
%for any $p\geq 1$ implies that $\displaystyle \int_s^t (v_n)_x (x,\tau)\ud\tau 
%\xrightarrow[n\to+\infty]{L^p\left(Q_t \right)} \int_s^t v_x (x,\tau)\ud\tau$ strongly.  
%From \eqref{ub2}, with $T_n=+\infty$ that 
From \eqref{ub2}, with $T_n=+\infty$ we deduce

\beq\label{ub7}
\sup_{\substack{x\in\Omega\\0<s<t}}\left|\int_{t-s}^t v_x(x,\tau)\ud \tau \right|\leq \theta
\eeq

%\beq\label{ub7}
%\begin{minipage}[t]{3cm}
%                 $\sup$\\
%    {\small $x\in\Omega$}\\
%{\small $0<s<t$}
%                    \end{minipage} 
%\left|\int_{t-s}^t v_x(x,\tau)\ud \tau \right|\leq \theta
%\eeq 

Making use of \eqref{ap6} leads to the strong convergence

\beq\label{ub8}
g'\left( (u_n)_x(x, t) - (u_n)_x(x, s) \right) \xrightarrow[n\to+\infty]
{C\left(\overline{Q_t} \right)} 
g'\left( u_x(x,t) - u_x(x,s) \right).
\eeq

Since $(v_n)_x\displaystyle \xrightarrow[n\to+\infty]{L^2(Q_t)} v_x$ strongly and $a_n \displaystyle \xrightarrow[n\to+\infty]{L^2(0,t)} a$ strongly (consequence of assumption $(a_2)$), one easily gets \eqref{ub4} which ends the proof of Theorem \ref{mr}.

%%%%%%%%%%%%%%%%%%%%%%%%%%%%%%%%%%%%%%%%%%%%
%%%%%%%%%%%%%%%%%%%%%%%%%%%%%%%%%%%%%%%%%%
\section{A class of totally monotone functions compliant with hypotheses $(a_1)$ to $(a_5)$.}\label{ax}
%%%%%%%%%%%%%%%%%%%%%%%%%%%%%%%%%%%%%%%%%%
%%%%%%%%%%%%%%%%%%%%%%%%%%%%%%%%%%%%%%%%%%%%

The goal here is to introduce a large class of functions $a$ compliant with assumptions $(a_1)$-$(a_5)$.  The following Lemma deals with sufficient conditions so that $(a_5)$ holds. 

\begin{lemma}\label{ax1}
Assume that $b\in W^{1,1}\left(0,+\infty \right)$ satisfies the following conditions

\begin{enumerate}[(i)]
\item $ t b'\in L^1\left(0,+\infty \right)$
\item there exists $M_3>0$ and $\alpha_1>0$ s.t. $\left|\mathcal{F}b(\omega) \right|\geq \dfrac{M_3}{1+|\omega|^{\alpha_1}} $, $\forall \omega\in\mathbb{R}$
\item there exists $M_4>0$ and $\alpha_2>0$ s.t. $\left| \mathcal{F}b'(\omega) \right|\leq \dfrac{M_4}{1+|\omega|^{\alpha_2}} $, $\forall \omega\in\mathbb{R}$
\item there exists $\alpha_3\in \mathbb{R}$ s.t. the function $\mathbb{R}\ni t\mapsto t b(t)\in \mathbb{R}$ is an element of $H^{\alpha_3}\left(\mathbb{R} \right)$ 
\end{enumerate}

Then there exists $M_5>0$ depending only on $M_3$,$M_4$, $\alpha_1$, $\alpha_2$ and $\alpha_3$, and $p\in\mathbb{N}^*$ depending only on $\alpha_1$ and $\alpha_2$ and $\alpha_3$, s.t. 

\beq\label{ax2}
\dfrac{\left( \mathcal{F}b'\right)^p}{\mathcal{F}b}\in \mathcal{F}\left( B_{L^1\left(\mathbb{R} \right)} (0,M_6) \right) 
\eeq 

where 

\beq\label{ax3}
M_6=M_5\left[1+ \|tb'\|_{L^1\left(\mathbb{R} \right)}+\|tb\|_{H^{\alpha_3}\left(\mathbb{R} \right)}  \right] 
\eeq

\end{lemma}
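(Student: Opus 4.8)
The plan is to show that $\dfrac{(\mathcal{F}b')^p}{\mathcal{F}b}$ is the Fourier transform of an $L^1(\mathbb{R})$ function whose norm is controlled as claimed, by splitting the estimate into a low-frequency regime (where smoothness/decay at infinity of $tb$, $tb'$ gives $L^1$ control via Cauchy–Schwarz on weighted $L^2$) and a high-frequency regime (where the algebraic bounds $(ii)$ and $(iii)$ give enough decay once $p$ is chosen large). First I would set $h := \mathcal{F}^{-1}\!\left[\dfrac{(\mathcal{F}b')^p}{\mathcal{F}b}\right]$, which is well defined as a tempered distribution since, by $(ii)$–$(iii)$, the symbol $\dfrac{(\mathcal{F}b')^p}{\mathcal{F}b}$ satisfies
\[
\left|\frac{(\mathcal{F}b'(\omega))^p}{\mathcal{F}b(\omega)}\right| \le \frac{M_4^p}{M_3}\,\frac{1+|\omega|^{\alpha_1}}{(1+|\omega|^{\alpha_2})^p},
\]
which for $p$ large enough (depending only on $\alpha_1,\alpha_2$) decays like $|\omega|^{-N}$ with $N$ as large as we wish; in particular the symbol lies in $L^2(\mathbb{R})$ once $p\alpha_2 - \alpha_1 > 1/2$, so $h\in L^2(\mathbb{R})$ by Plancherel. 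The issue is not integrability of $h$ against a fixed weight but genuine $L^1$-membership, so I would introduce the weight $w(\omega) = (1+|\omega|^2)^{1/2}$ and write $\|h\|_{L^1} \le \|w^{-1}\|_{L^2}\,\|w\,\hat h\|_{L^2}$ — but this requires $w\,\hat h \in L^2$, i.e. one derivative of the symbol in the sense of $H^1$-type control, which is exactly where hypothesis $(i)$ and $(iv)$ enter (they say $tb'$ and $tb$ are nice, i.e. $\omega\mapsto \frac{d}{d\omega}\mathcal{F}b'$ and $\omega\mapsto\frac{d}{d\omega}\mathcal{F}b$ are controlled, since $\mathcal{F}(tb)= i\frac{d}{d\omega}\mathcal{F}b$).

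The key steps, in order, would be: (1) record the elementary transform identities $\mathcal{F}(tb)(\omega) = i\frac{d}{d\omega}\mathcal{F}b(\omega)$ and $\mathcal{F}(tb')(\omega) = i\frac{d}{d\omega}\mathcal{F}b'(\omega)$, so that $(i)$ gives $\frac{d}{d\omega}\mathcal{F}b' \in L^\infty$ with norm $\le \|tb'\|_{L^1}$, and $(iv)$ gives $\omega\mapsto w(\omega)^{\alpha_3}\frac{d}{d\omega}\mathcal{F}b \in L^2$ with norm $\le \|tb\|_{H^{\alpha_3}}$; (2) differentiate the symbol $\sigma(\omega):=\frac{(\mathcal{F}b'(\omega))^p}{\mathcal{F}b(\omega)}$ by the quotient/product rule, obtaining
\[
\sigma'(\omega) = p\,\frac{(\mathcal{F}b')^{p-1}}{\mathcal{F}b}\,(\mathcal{F}b')' \;-\; \frac{(\mathcal{F}b')^{p}}{(\mathcal{F}b)^2}\,(\mathcal{F}b)',
\]
and bound each piece using $(ii)$, $(iii)$ and the $L^\infty$/weighted-$L^2$ information from step (1); choosing $p$ large (depending on $\alpha_1,\alpha_2,\alpha_3$) forces both $\sigma$ and $\sigma'$ into $L^2(\mathbb{R})$, with norms bounded by a constant $M_5=M_5(M_3,M_4,\alpha_1,\alpha_2,\alpha_3)$ times $1+\|tb'\|_{L^1}+\|tb\|_{H^{\alpha_3}}$; (3) conclude that $\omega\mapsto (1+\omega^2)^{1/2}\sigma(\omega)\in L^2$, hence $h\in L^1$ with $\|h\|_{L^1}\le C\,\|(1+\omega^2)^{1/2}\sigma\|_{L^2}$, giving \eqref{ax2}–\eqref{ax3}; (4) finally, since $b$ (hence $b'$) has support in $\mathbb{R}_+$ and $\mathcal{L}b$ extends analytically to $\mathrm{Re}(z)\ge 0$, the symbol $\sigma$ is the boundary value of a function analytic in the right half-plane, so by Paley–Wiener $h\in L^1_{\mathbb{R}_+}(\mathbb{R})$, which is what is needed for hypothesis $(a_5)$ via the identification $B_2 = \frac{(-1)^p}{b^p(0_+)}\,h$.

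The main obstacle I expect is step (2)–(3): getting the \emph{weighted} $L^2$ bound on the symbol, i.e. controlling $\sigma'$. The term $\frac{(\mathcal{F}b')^p}{(\mathcal{F}b)^2}(\mathcal{F}b)'$ involves $(\mathcal{F}b)^2$ in the denominator and $(\mathcal{F}b)' = -i\mathcal{F}(tb)$ in the numerator, and $\mathcal{F}(tb)$ is only known to be in $H^{\alpha_3}$, which a priori does \emph{not} embed into $L^\infty$ when $\alpha_3 \le 1/2$; so one cannot simply pull out an $L^\infty$ bound on $(\mathcal{F}b)'$. The remedy is to keep $(\mathcal{F}b)'$ inside the $L^2$ norm and absorb the remaining, bounded factors $\frac{(\mathcal{F}b')^p}{(\mathcal{F}b)^2}\,w^{-\alpha_3}$ in $L^\infty$ — this is possible precisely because for $p$ sufficiently large (now also depending on $\alpha_3$) the decay from $(\mathcal{F}b')^p$ beats the growth of $(\mathcal{F}b)^{-2}$ and of $w^{\alpha_3}$. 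Tracking exactly how large $p$ must be, and that the resulting constant depends only on the listed quantities and not on finer features of $b$, is the bookkeeping that needs care; everything else is a routine application of Plancherel and the weighted Cauchy–Schwarz inequality $\|h\|_{L^1}\le \|w^{-1}\|_{L^2}\|w\hat h\|_{L^2}$.
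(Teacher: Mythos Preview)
Your approach is essentially the same as the paper's: show the symbol $\sigma=(\mathcal{F}b')^p/\mathcal{F}b$ belongs to $H^1(\mathbb{R})$ by bounding $\|\sigma\|_{L^2}$ and $\|\sigma'\|_{L^2}$ separately (using (ii)--(iii) for the first, and the quotient-rule decomposition together with (i) and (iv) for the second, with $p$ large depending on $\alpha_1,\alpha_2,\alpha_3$), then invoke $H^1(\mathbb{R})\hookrightarrow \mathcal{F}L^1(\mathbb{R})$. The paper does exactly this, citing the embedding from Lions--Magenes.

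One notational slip to fix: in your step (3) you write ``$(1+\omega^2)^{1/2}\sigma(\omega)\in L^2$'', but that is not the condition giving $h\in L^1$. The weighted Cauchy--Schwarz $\|h\|_{L^1}\le\|(1+t^2)^{-1/2}\|_{L^2}\|(1+t^2)^{1/2}h\|_{L^2}$ lives on the $t$-side; on the $\omega$-side this translates to $\sigma,\sigma'\in L^2$, i.e.\ $\sigma\in H^1(\mathbb{R}_\omega)$, which is precisely what your step (2) establishes. Also, your step (4) (support in $\mathbb{R}_+$ via Paley--Wiener) is not part of this lemma's conclusion; the paper handles that separately in the proof of the Inversion Theorem.
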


\begin{proof}

Since $H^1\left(\mathbb{R} \right)\subset \mathcal{F}L^1\left(\mathbb{R} \right)$ and $\left\|\mathcal{F}^{-1}w \right\|_{L^1\left(\mathbb{R} \right)}\leq C \|w\|_{H^1\left(\mathbb{R} \right)}$, $\forall w\in  H^1\left(\mathbb{R} \right)$ (see \cite{limg}), it suffices to consider the $H^1$ norm of $E\equiv \dfrac{\left[ \mathcal{F}b'\right]^p}{\mathcal{F}b}$.  From hypotheses (ii) and (iii) it is clear that, for $p$ large enough depending on $\alpha_1$ and $\alpha_2$, we have

\beq\label{ax4}
\|E\|_{L^2\left(\mathbb{R} \right)}\leq M_5
\eeq

where $M_5$ depends on $M_3$, $M_4$ and $\alpha_2$.  We also have $E'=E_1-E_2$, with 

\beq\label{ax5}
E_1:= p \dfrac{\left[ \mathcal{F}b'\right]^{p-1} \left[ \mathcal{F}b'\right]'}{\mathcal{F}b}
\eeq

\beq\label{ax6}
E_2:= \dfrac{\left[ \mathcal{F}b'\right]^{p} \left[ \mathcal{F}b\right]'}{\left( \mathcal{F}b\right)^2}
\eeq

Since $\left| \left( \mathcal{F}b'\right)' \right| = 
\left|  \mathcal{F}(tb') \right| \in L^\infty\left(\mathbb{R}_+ \right)$, from the above mentioned assumptions we get there exists $p$ large enough depending on $\alpha_1$ and $\alpha_2$ s.t.

\beq\label{ax7}
\|E_1\|_{L^2\left(\mathbb{R} \right)}\leq M_5 \|tb'\|_{L^1\left(\mathbb{R} \right)}
\eeq

From assumption (iv) and the fact that  $\left| \left( \mathcal{F}b\right)' \right| =
\left| \mathcal{F}(tb) \right| $  
we have that the function $\omega \longrightarrow \left(1+\omega^2 \right)^{\alpha_3/2}\left( \mathcal{F}b\right)'(\omega)\in L^2(\mathbb{R})$, and, $\left\|\left(1+\omega^2 \right)^{\alpha_3/2}\left( \mathcal{F}b\right)'(\omega)  \right\|_{L^2(\mathbb{R})} = \|tb\|_{H^{\alpha_3}(\mathbb{R})}$.

Then there exists $p$ large enough depending on $\alpha_1$,  $\alpha_2$ and $\alpha_3$ s.t.     

\beq\label{ax8}
\|E_2\|_{L^2\left(\mathbb{R} \right)}\leq M_5 \|tb\|_{H^{\alpha_3}\left(\mathbb{R} \right)}
\eeq

with $M_5$ as before.  From \eqref{ax4}, \eqref{ax7} and \eqref{ax8}  the claimed result follows. 

\end{proof}

Let $\mu$ be a positive, finite and non-zero Borel measure on $\mathbb{R}_+$, satisfying

\begin{enumerate}[$(\mu_1):$]
\item the function $\mathbb{R}_+\ni \rho \mapsto \dfrac{1}{\rho^2}$ is an element of $L^1_\mu(0,+\infty)$
\item there exists $\gamma\in(0,1)$ s.t. the function $\mathbb{R}_+\ni \rho \mapsto \rho^\gamma$ 
is an element of $L^1_\mu(0,+\infty)$
\end{enumerate}

\noindent
Remark that, as a consequence of these hypotheses, the function
$\mathbb{R}_+\ni \rho \mapsto \rho^\beta$ 
is an element of $L^1_\mu(0,+\infty)$ for any $\beta \in [-2, \gamma]$.

We now consider the following totally monotone function (see \cite{pru1}) 

\beq\label{ax8p}
\tilde{a}:[0,+\infty)\rightarrow \mathbb{R},\, \tilde{a}(t)=\int_{\mathbb{R}_+} e^{-\rho t} \ud \mu(\rho),\,\forall t\geq 0
\eeq

This Section main result is contained in the below theorem:

\begin{theorem}\label{axr}
Assume the hypotheses $(\mu_1)$ and $(\mu_2)$ hold true.  Then the function $\tilde{a}$ given by 
\eqref{ax8p} satisfies the hypotheses $(a_1)$-$(a_5)$ of Section \ref{ibvps} with 
\\

$$\tilde{a}_n(t)=\displaystyle \int_{[0,n)}e^{-\rho t}\ud \mu(\rho),\, \forall t\geq 0,\, \forall n\in \mathbb{N}^*$$
\end{theorem}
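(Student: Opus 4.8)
The plan is to verify the five conditions $(a_1)$--$(a_5)$ in turn for the pair $\big(\tilde a,(\tilde a_n)_n\big)$, the workhorse being Fubini's theorem and the elementary identities
\[
\mathcal L\tilde a_n(z)=\int_{[0,n)}\frac{\ud\mu(\rho)}{\rho+z}\ \ (\text{Re}\,z\ge0),\qquad \mathcal F\tilde a_n'(\omega)=-\int_{[0,n)}\frac{\rho\,\ud\mu(\rho)}{\rho+i\omega},\qquad \mathcal F(t\,\tilde a_n)(\omega)=\int_{[0,n)}\frac{\ud\mu(\rho)}{(\rho+i\omega)^2},
\]
obtained by integrating $e^{-\rho t}$, respectively $t\,e^{-\rho t}$, in $t$. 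I shall use freely the remark after $(\mu_2)$ that $\rho\mapsto\rho^\beta\in L^1_\mu(0,+\infty)$ for every $\beta\in[-2,\gamma]$ (in particular $\rho^{-1}\in L^1_\mu$), and the fact that $\mu$, being nonzero with no atom at $0$ (an atom there would ruin $\tilde a\in L^1$), satisfies $c:=\mu([\rho_1,\rho_2])>0$ for suitable $0<\rho_1<\rho_2$.

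\emph{Conditions $(a_1)$ and $(a_2)$.} Differentiating under the finite integral, $\tilde a_n\in\mathscr{C}^\infty([0,+\infty))$ with $\tilde a_n^{(k)}(t)=\int_{[0,n)}(-\rho)^ke^{-\rho t}\,\ud\mu$, so $\|\tilde a_n^{(k)}\|_{L^\infty}\le n^k\mu(\mathbb R_+)$ (hence $\tilde a_n\in W^{2,\infty}$) and $\tilde a_n'\le0$, $\tilde a'\le0$. Fubini will give $\|\tilde a_n\|_{L^1}=\int_{[0,n)}\rho^{-1}\ud\mu\le\int_{\mathbb R_+}\rho^{-1}\ud\mu<\infty$ and $\|\tilde a_n'\|_{L^1}=\mu([0,n))\le\mu(\mathbb R_+)$, uniformly in $n$, with the same bounds for $\tilde a$; thus $\tilde a\in W^{1,1}$ and $(\tilde a_n)$ is $W^{1,1}$-bounded. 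Since $0\le\tilde a_n\uparrow\tilde a$ pointwise with $\tilde a\in L^1$, dominated convergence gives $\tilde a_n\to\tilde a$ in $L^1(0,+\infty)$, a fortiori in $\mathscr D'(0,+\infty)$.

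\emph{Conditions $(a_3)$ and $(a_4)$.} Here I would use Fubini together with $\int_0^1te^{-\rho t}\ud t\le\rho^{-2}$ and $\int_1^{+\infty}t^2e^{-\rho t}\ud t\le2\rho^{-3}$ to get $\int_0^1t|\tilde a_n''|\ud t\le\mu(\mathbb R_+)$ and $\int_1^{+\infty}t^2|\tilde a_n'|\ud t\le2\int_{\mathbb R_+}\rho^{-2}\ud\mu$. For $\int_1^{+\infty}\sqrt t\,|\tilde a_n''(t)|\ud t=\int_{[0,n)}\rho^2\big(\int_1^{+\infty}\sqrt t\,e^{-\rho t}\ud t\big)\ud\mu$, the plan is to split at $\rho=1$: for $\rho\le1$ use $\int_1^{+\infty}\sqrt t\,e^{-\rho t}\ud t\le\Gamma(3/2)\rho^{-3/2}$, so the inner expression times $\rho^2$ is $\le\Gamma(3/2)\sqrt\rho\le\Gamma(3/2)$; for $\rho\ge1$ use $e^{-\rho t}\le e^{-\rho/2}e^{-t/2}$ on $t\ge1$, so it is $\le C\rho^2e^{-\rho/2}$, bounded. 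In both ranges a bounded function is integrated against the finite measure $\mu$, giving $(a_3)$ uniformly in $n$. For $(a_4)$, $\text{Re}\,\mathcal F\tilde a_n(\omega)=\int_{[0,n)}\rho(\rho^2+\omega^2)^{-1}\ud\mu\ge c\,\rho_1(\rho_2^2+\omega^2)^{-1}\ge M_1(1+\omega^2)^{-1}$ as soon as $n>\rho_2$, with $M_1=c\,\rho_1\min(1,\rho_2^{-2})$ and $n_0=\lceil\rho_2\rceil+1$.

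\emph{Condition $(a_5)$ --- and the main obstacle.} The plan is to apply Lemma \ref{ax1} to $b=\tilde a_n$; after discarding the finitely many indices with $\mu([0,n))=0$, each $\tilde a_n$ is a nontrivial $W^{1,1}$ function with $\tilde a_n(0_+)=\mu([0,n))\neq0$. Hypothesis (i) of that lemma is $\|t\tilde a_n'\|_{L^1}=\int_{[0,n)}\rho^{-1}\ud\mu\le\int_{\mathbb R_+}\rho^{-1}\ud\mu$; hypothesis (ii) is $|\mathcal F\tilde a_n(\omega)|\ge\text{Re}\,\mathcal F\tilde a_n(\omega)\ge M_1(1+\omega^2)^{-1}$ for $n\ge n_0$ (so $\alpha_1=2$); hypothesis (iv) will follow from $|\mathcal F(t\tilde a_n)(\omega)|\le\int_{[0,n)}(\rho^2+\omega^2)^{-1}\ud\mu\le C(1+\omega^2)^{-1}$ (split at $\rho=|\omega|$, using $\mu$ finite and $\rho^{-2}\in L^1_\mu$), whence $t\tilde a_n\in L^2(\mathbb R)=H^0(\mathbb R)$ with norm uniform in $n$ ($\alpha_3=0$). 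The crux will be hypothesis (iii): the naive bound $|\mathcal F\tilde a_n'(\omega)|\le\int_{[0,n)}\rho(\rho^2+\omega^2)^{-1/2}\ud\mu\le\mu(\mathbb R_+)$ only yields the useless exponent $\alpha_2=0$, and it cannot be improved to the expected $|\omega|^{-1}$ decay because $\int_{\mathbb R_+}\rho\,\ud\mu$ need not be finite. Instead I would split once more at $\rho=|\omega|$: for $\rho\le|\omega|$, $\rho(\rho^2+\omega^2)^{-1/2}\le\rho/|\omega|\le|\omega|^{-\gamma}\rho^\gamma$; for $\rho>|\omega|$, the factor is $\le1$ while $\mu((|\omega|,+\infty))\le|\omega|^{-\gamma}\int_{\mathbb R_+}\rho^\gamma\ud\mu$ by Chebyshev's inequality. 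Using $(\mu_2)$ this gives $|\mathcal F\tilde a_n'(\omega)|\le M_4(1+|\omega|^\gamma)^{-1}$ uniformly in $n$, i.e.\ $\alpha_2=\gamma>0$ --- precisely the positivity that makes the $L^2$-estimates inside the proof of Lemma \ref{ax1} converge, for $p$ chosen large depending only on $\gamma$. Lemma \ref{ax1} then yields $(\mathcal F\tilde a_n')^p/\mathcal F\tilde a_n\in\mathcal F\big(B_{L^1(\mathbb R)}(0,M_6)\big)$ with $M_6=M_5\big[1+\|t\tilde a_n'\|_{L^1}+\|t\tilde a_n\|_{L^2}\big]$ bounded independently of $n$, and taking $M_2$ to be this uniform bound (together with the finitely many excluded-index values) establishes $(a_5)$. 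The hard part of the whole theorem is this uniform-in-$n$ decay of $\mathcal F\tilde a_n'$, which is exactly what forces the second integrability hypothesis $(\mu_2)$ into the picture.
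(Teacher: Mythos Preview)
Your proposal is correct and follows essentially the same route as the paper: verify $(a_1)$--$(a_4)$ by Fubini/moment computations, then feed $b=\tilde a_n$ into Lemma~\ref{ax1} to obtain $(a_5)$. The differences are purely cosmetic. For $(a_3)$ the paper uses the single identity $\int_0^\infty t^q|\tilde a_n^{(k)}|\,\ud t=\Gamma(q+1)\int_{[0,n)}\rho^{k-q-1}\ud\mu$ and then notes $\sqrt t\le t$ on $[1,\infty)$, instead of your case split at $\rho=1$. For hypothesis~(iii) of Lemma~\ref{ax1} the paper gets the same exponent $\alpha_2=\gamma$ via Young's inequality $\rho^{2(1-\gamma)}|\omega|^{2\gamma}\le\rho^2+\omega^2$, in place of your split-plus-Chebyshev. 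For hypothesis~(iv) the paper is content with $t\tilde a_n\in L^1\subset H^{-1}$ ($\alpha_3=-1$), whereas you prove the stronger $t\tilde a_n\in L^2=H^0$; note that your $(1+\omega^2)^{-1}$ bound on $\mathcal F(t\tilde a_n)$ follows immediately from $\tfrac{1}{\rho^2+\omega^2}\le\min(\rho^{-2},\omega^{-2})$ --- no split is needed. Your remark about discarding the finitely many indices with $\mu([0,n))=0$ is a detail the paper passes over in silence.
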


\begin{proof}

Since the measure $\mu$ is finite, it is clear that $\tilde{a}_n \in\mathscr{C}^\infty
\left(\mathbb{R}_+ \right)$, and for any $t\in  \mathbb{R}_+$ and $k\in \mathbb{N}$, 
$(\tilde{a}_n)^{(k)}(t)= \displaystyle \int_{[0,n)} (-1)^k \rho^k e^{-\rho t}\ud \mu(\rho)$.  
This gives $\tilde{a}_n \in W^{p,\infty}\left(0,+\infty \right) $, for any $p\in \mathbb{N}$
and also $\tilde{a}_n' < 0$. 

Let $k\in \mathbb{N}$ and $q\in \mathbb{R}_+$.  Then

$$\int_0^{+\infty} t^q \left(\tilde{a}_n \right)^{(k)}(t) \ud t = (-1)^k \int_0^{+\infty} 
t^q\int_{[0,n)} \rho^k e^{-\rho t}\ud \mu(\rho)\ud t = (-1)^k \int_{[0,n)} \rho^k \left( 
\int_0^{+\infty}t^q e^{-\rho t} \ud t \right)   \ud \mu(\rho) $$

Taking $\tau=\rho t$ in the integral w.r.t. $t$ leads to

\beq\label{axr1}
\int_0^{+\infty} t^q \left| \left(\tilde{a}_n \right)^{(k)}(t) \right| \ud t = \int_0^{+\infty} 
\tau^q e^{-\tau}\ud\tau  \int_{[0,n)} \rho^{k-q-1}\ud \mu(\rho)
\eeq

Invoking hypotheses $(\mu_1)$ and $(\mu_2)$ gives

\beq\label{axr2}
\int_{[0, + \infty)} \rho^{k-q-1}\ud \mu(\rho) <\infty
\eeq

provided that 

\beq\label{axr3}
0\leq q+1-k\leq 2
\eeq

For $q=0$ and $k=0$ or $k=1$ one sees that \eqref{axr3} is verified, therefore $(a_1)$ and $(a_2)$ are valid.

For $q=2$ and $k=1$ \eqref{axr3} is also verified, then $\displaystyle 
\int_0^{+\infty} t^2 |\tilde{a}'_n (t)| \ud t$ is bounded.  The same for $q=1$ and $k=2$, 
with this time $\displaystyle \int_0^{+\infty}t |\tilde{a}''_n (t)|\ud t$ bounded.  
The later grants $(a_3)$ is valid.

Next, by Fubini's theorem we obtain, for $\omega\in \mathbb{R}$,

$$\mathcal{F}\tilde{a}_n(\omega)=\int_0^{+\infty}\int_{[0,n)} e^{-\rho t}\ud \mu(\rho) e^{-i\omega t}\ud t=\int_{[0,n)} \dfrac{\ud \mu(\rho)}{\rho+i\omega}  $$

from which one gets

$$\text{Re}\left[\mathcal{F}\tilde{a}_n(\omega) \right]=\int_{[0,n)}\dfrac{\rho}{\rho^2+\omega^2}\ud \mu(\rho)    $$

Now, assumption $(\mu_1)$ gives $\mu \left(\left\lbrace 0 \right\rbrace \right)=0$, so, there exists $\underline{\mu}$  and $\overline{\mu}$ s.t. $0<\underline{\mu}<\overline{\mu}$ and $\mu\left(\left[\underline{\mu}, \overline{\mu} \right]  \right) >0$.  Take $n>\overline{\mu}$ to get 

$$\text{Re}\left[\mathcal{F}\tilde{a}_n(\omega) \right]\geq \dfrac{\underline{\mu}}{\overline{\mu}^2+\omega^2}\mu\left(\left[\underline{\mu}, \overline{\mu} \right]  \right),\, \forall \omega\in \mathbb{R}$$

which proves $(a_4)$. 

Now we prove that the hypotheses of Lemma \ref{ax1} are verified for $ b = \tilde{a}_n$, with constants independent of $n$.

The last inequality also proves that (ii) of Lemma \ref{ax1} is verified with  $M_3$
independent of $n$ and $\alpha_1=2$.  Taking $q = k = 1$ (which satisfy \eqref{axr3}) we deduce that part (i) of Lemma \ref{ax1} is also 
verified, and that $\|t \tilde{a}_n'\|_{L^1(0, + \infty)}$ is bounded.

Next, on one hand, we easily calculate 

$$\mathcal{F}\tilde{a}'_n(\omega)=-\int_{[0,n)} \dfrac{\rho}{\rho+i\omega}\ud \mu(\rho) $$

which gives

\beq\label{axr4} 
\left| \mathcal{F}\tilde{a}'_n(\omega) \right| \leq \int_{[0, n)} \dfrac{\rho}{\sqrt{\rho^2+\omega^2}} \ud \mu(\rho)
\eeq

We deduce that

\beq\label{axr5} 
\left| \mathcal{F}\tilde{a}'_n(\omega) \right| \leq \int_{\mathbb{R}_+} \ud \mu(\rho)
\eeq

On the other hand now, we use the fact that 

$$\rho^{2(1-\gamma)}|\omega|^{2\gamma}\leq   \gamma   |\omega|^2+ 
 (1-\gamma)  \rho^2 \leq |\omega|^2+\rho^2 $$

to get from \eqref{axr4}, for $\omega\neq0$,

$$\left| \mathcal{F}\tilde{a}'_n(\omega) \right| \leq \int_{[0,n)} \dfrac{\rho}{\rho^{1-\gamma} |\omega|^\gamma} \ud \mu(\rho)=\dfrac{1}{|\omega|^\gamma}\int_{[0,n)}\rho^{ \gamma} \ud \mu(\rho)  $$

Invoke $(\mu_2)$ to get, for $\omega\neq0$, 

\beq\label{axr6}
\left| \mathcal{F}\tilde{a}'_n(\omega) \right| \leq \dfrac{1}{|\omega|^\gamma}\int_{\mathbb{R}_+}\rho^{ \gamma} \ud \mu(\rho)
\eeq

Then, \eqref{axr5} and \eqref{axr6} give

$$\left| \mathcal{F}\tilde{a}'_n(\omega) \right| \leq \dfrac{2}{1+|\omega|^\gamma} \int_{\mathbb{R}_+} \left(1+ \rho^{ \gamma} \right)\ud \mu(\rho)   $$

Then the assumption formulated in (iii) of Lemma \ref{ax1} is verified
with $\alpha_2=\gamma$ and a constant $M_4$ independent of $n$.

Finally, the inequality \eqref{axr3} is verified with $q=1$ and $k=0$.  From \eqref{axr1} and assumption $(\mu_2)$ we get 

$$ \|t\tilde{a}_n\|_{L^1\left(\mathbb{R}_+ \right)} \leq \int_0^{+\infty} \tau e^{-\tau} \ud\tau  \int_{\mathbb{R}_+} \rho^{ -2} \ud \mu(\rho)< \infty $$

The above entails $t\tilde{a}_n$ is bounded in $H^{-1}(\mathbb{R})$; consequently hypothesis 
(iv) of Lemma \ref{ax1} is verified with $\beta=-1$.
We then deduce that the conclusion of Lemma \ref{ax1} is verified with a constant $M_6 > 0$
independent of $n$. 
Then hypothesis $(a_5)$ is verified.
\end{proof}

\begin{remark}\label{frk}
The relaxation  function of the Doi-Edwards theory, $\displaystyle a_{\text{DE}}(t)=\sum_{k=1}^{+\infty}\dfrac{1}{(2k+1)^2}e^{-(2k+1)^2 t}$, $t\geq0$, is actually a particular case of \eqref{ax8p} with the measure $\displaystyle \mu_{\text{DE}} =\sum_{k=1}^{+\infty}\dfrac{1}{(2k+1)^2}\delta_{(2k+1)^2}$, where $\delta_{(2k+1)^2}$ is Dirac's measure at $(2k+1)^2$. 

It is easy to see that the assumptions $(\mu_1)$, $(\mu_2)$ are verified for this measure, and this paper results can be applied for the $\displaystyle a_{DE}$  function.  
\end{remark}

%\left\|v_{xx}(\cdot,t) \right\|_{L^2\left(\Omega \right) } 
%\left\|v_{t}(\cdot,t) \right\|_{L^2\left(\Omega \right) }  
%_{L^2\left(\Omega \right) } %_{L^1\left(\mathbb{R}_+ \right)}

%%%%%%%%%%%%%%%%%%%%%%%%%
%%%%%%%%%%%%%%%%%%%%%%%
\section{Appendix.}
%%%%%%%%%%%%%%%%%%%%%%%%%%%%%
%%%%%%%%%%%%%%%%%%%%%%%%%%%%

The task here is to prove Lemma \ref{lies}, relabeled below as Lemma \ref{lies1x}.

Let the function $\xi=\xi(s,t,x)$ be defined a.e. as $\displaystyle \xi(s,t,x):=a'(s)\left[g'\left(\overline{v}_x^t(x,s)\right) - g'(0)   \right] $, $s\in[0,+\infty)$, $t\in[0,T)$, $x\in\Omega$.  Let $\displaystyle D_T:=\left\lbrace (s,t): \, s\in[0,+\infty),\, t\in[0,T),\,s\neq t \right\rbrace $.

In the following,  $\partial_1\xi$,  $\partial_2\xi$, $\partial_{22}\xi$ stand for $\dfrac{\partial \xi}{\partial s}$, $\dfrac{\partial \xi}{\partial t}$, and $\dfrac{\partial^2 \xi}{\partial t^2}$, respectively.

%Let now the function $\xi=\xi(s,t,x)$ be defined a.e. as $\displaystyle \xi(s,t,x):=a'(s)\left[g'\left(\overline{v}_x^t(x,s)\right) - g'(0)   \right] $, $s\in[0,+\infty)$, $t\in[0,T)$, $x\in\Omega$.  Let $\displaystyle D_T:=\left\lbrace (s,t): \, s\in[0,+\infty),\, t\in[0,T),\,s\neq t \right\rbrace $.  We take on to proving the following:

The first step is proving the following:

\begin{lemma}\label{ies}
Invoking the above defined notations, 
\begin{enumerate}[(i)]
\item one has: $\displaystyle \xi \in \mathscr{C}^1\left(D_T; H^1(\Omega) \right) $,  $\dfrac{\partial^2 \xi}{\partial t^2}\in \mathscr{C}^0\left(D_T; L^2(\Omega) \right) $;
\item assuming \eqref{pr6} holds true,  one has the following estimates a.e. $x\in \Omega$, $s\in [0,+\infty)$
\beq\label{ies1}
\left|\xi(s,t,x) \right|\leq K \nu(t) \left|a'(s) \right|r_0(s)
\eeq

\beq\label{ies2}
\left|\dfrac{\partial\xi}{\partial t}(s,t,x) \right|\leq 2K \theta \nu(t) \left|a'(s) \right|
\eeq

\beq\label{ies3}
\left|\dfrac{\partial\xi}{\partial s}(s,t,x) \right|\leq  K  \nu(t) \left[ 
\left |a''(s) \right|r_0(s)+\theta\left|a'(s) \right| \right] 
\eeq

%\begin{align}\label{ies4}
%\left|\dfrac{\partial^2\xi}{\partial s\partial t}(s,t,x) \right| & \leq 2\nu^2(t) 
%\left[K \theta +\left|g^{(3)}(0)\right| \right]\left|a'(s) \right|+
%2 K \nu(t) \theta \left |a''(s) \right|  \nonumber\\ 
%& + K \nu(t) \left|v_{xt}(x,t-s)\right|\left|a'(s) \right|r_0(s)
%\end{align}

\begin{align}\label{ies5}
\left|\dfrac{\partial^2\xi}{\partial t^2}(s,t,x) \right| & \leq 4\nu^2(t)
\left[K \theta +\left|g^{(3)}(0)\right| \right]\left|a'(s) \right| \nonumber\\
& +K\nu(t) \left|a'(s) \right|r_0(s) \left[\left|v_{xt}(x,t)\right| 
 + \left|v_{xt}(x,t-s)\right|\right]  
\end{align}
\end{enumerate}

The above derivatives may be considered in the classical sense, as they are defined for $s\neq t$. 
\end{lemma}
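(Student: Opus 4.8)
The plan is to deduce both statements from the classical chain rule applied, on the open set $D_T$, to the function $g^{(j)}\bigl(\overline{v}^t_x(x,s)\bigr)$, combined with the pointwise bounds already established in Lemma \ref{le}. First I would collect the regularity available here: for the solution of $(P_n)$, Proposition \ref{pr} gives $v\in\mathscr{C}^0\bigl([0,T);H^2(\Omega)\bigr)$, $v_t\in\mathscr{C}^0\bigl([0,T);H^1(\Omega)\bigr)$, $v_{tt}\in\mathscr{C}^0\bigl([0,T);L^2(\Omega)\bigr)$ and $u\in\mathscr{C}^0\bigl([0,T);H^3(\Omega)\bigr)$, while $a=a_n\in\mathscr{C}^2\bigl([0,+\infty)\bigr)\cap W^{2,\infty}\bigl([0,+\infty)\bigr)$ by $(a_2)$. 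I would then rewrite $\overline{v}^t_x(x,s)=u_x(x,t)-u_x(x,t-s)$ and observe that, since $u_x(\cdot,0)=0$ and $u_x$ is extended by $0$ to negative times, the map $(s,t)\mapsto u_x(\cdot,t-s)$ is continuous from $\{s\ge0,\ t\in[0,T)\}$ into $H^2(\Omega)$, and likewise $u_{xx}(\cdot,t-s)$ into $H^1(\Omega)$; by contrast $v_x(\cdot,t-s)$ and $v_{xt}(\cdot,t-s)$ are continuous only on $\{s\ne t\}$, because $v(\cdot,0)=v_0$ need not vanish in the interior of $\Omega$ while $v\equiv0$ for negative times — this is precisely why the statement is phrased on $D_T$. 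Finally I would note that, under \eqref{pr6}, the bound \eqref{pr62} keeps $\overline{v}^t_x(x,s)$ inside $[-\theta,\theta]$, so every $g^{(j)}$, $j=0,1,2,3$, is evaluated where it is $\mathscr{C}^{3-j}$ by $(g_1)$.

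For part (i), I would differentiate $\xi(s,t,x)=a'(s)\bigl[g'(\overline{v}^t_x(x,s))-g'(0)\bigr]$ classically on $D_T$ using $\partial_s\overline{v}^t_x=v_x(x,t-s)$, $\partial_t\overline{v}^t_x=v_x(x,t)-v_x(x,t-s)$ and $\partial_t^2\overline{v}^t_x=v_{xt}(x,t)-v_{xt}(x,t-s)$, which yields
\begin{align*}
\partial_1\xi&=a''(s)\bigl[g'(\overline{v}^t_x)-g'(0)\bigr]+a'(s)\,g''(\overline{v}^t_x)\,v_x(x,t-s),\\
\partial_2\xi&=a'(s)\,g''(\overline{v}^t_x)\bigl[v_x(x,t)-v_x(x,t-s)\bigr],\\
\partial_{22}\xi&=a'(s)\,g^{(3)}(\overline{v}^t_x)\bigl[v_x(x,t)-v_x(x,t-s)\bigr]^2+a'(s)\,g''(\overline{v}^t_x)\bigl[v_{xt}(x,t)-v_{xt}(x,t-s)\bigr].
\end{align*}
Because in dimension one $H^2(\Omega)\hookrightarrow\mathscr{C}^1(\overline{\Omega})$, $H^1(\Omega)$ is a Banach algebra, and $g\in\mathscr{C}^3([-\theta,\theta])$, each composition $g^{(j)}(\overline{v}^t_x(\cdot,s))$ belongs to $H^1(\Omega)$ and the products defining $\xi$, $\partial_1\xi$, $\partial_2\xi$ are well defined in $H^1(\Omega)$; the only term of lower regularity is $g''(\overline{v}^t_x)\,v_{xt}$ in $\partial_{22}\xi$, which lies in $L^2(\Omega)$ only, since $v_{xt}$ is controlled merely in $\mathscr{C}^0\bigl([0,T);L^2(\Omega)\bigr)$. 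Continuity in $(s,t)$ then follows by composing the continuity of $(s,t)\mapsto\overline{v}^t_x$, $v_x(\cdot,t-s)$, $v_{xt}(\cdot,t-s)$ on $D_T$ with the continuity of the Nemytskii operators attached to $g^{(j)}$ and of the pointwise multiplication maps, giving $\xi\in\mathscr{C}^1\bigl(D_T;H^1(\Omega)\bigr)$ and $\partial_{22}\xi\in\mathscr{C}^0\bigl(D_T;L^2(\Omega)\bigr)$.

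For part (ii), I would insert into the three displayed formulas the estimate of part (i) of Lemma \ref{le}, $\bigl|g^{(j)}(\overline{v}^t_x(x,s))-g^{(j)}(0)\bigr|\le K\min\{\nu(t)r_0(s),\theta\}$ for $j=0,1,2,3$, using in addition $g''(0)=0$ (so that $|g''(\overline{v}^t_x)|\le K\min\{\nu(t)r_0(s),\theta\}$), the bound $|v_x(x,\tau)|\le\nu(t)$ valid for every $\tau\le t$ (including $\tau<0$, where $v_x=0$), hence $|v_x(x,t)-v_x(x,t-s)|\le2\nu(t)$, and $|g^{(3)}(\overline{v}^t_x)|\le|g^{(3)}(0)|+K\theta$. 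Then \eqref{ies1} is immediate; \eqref{ies2} follows from $|g''(\overline{v}^t_x)|\le K\theta$ and $|v_x(x,t)-v_x(x,t-s)|\le2\nu(t)$; \eqref{ies3} follows by bounding the two summands of $\partial_1\xi$ by $K\nu(t)|a''(s)|r_0(s)$ and by $K\theta\nu(t)|a'(s)|$; and \eqref{ies5} follows by bounding the first summand of $\partial_{22}\xi$ by $4\nu^2(t)\bigl[K\theta+|g^{(3)}(0)|\bigr]|a'(s)|$ and, for the second, using $|g''(\overline{v}^t_x)|\le K\nu(t)r_0(s)$ while keeping $|v_{xt}(x,t)|+|v_{xt}(x,t-s)|$ explicit, since $\nu$ does not control $v_{xt}$. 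I expect the main obstacle to be not any single inequality but the bookkeeping in part (i): isolating the jump of $v_x(\cdot,t-s)$ at $s=t$ that forces the domain $D_T$, justifying the Sobolev chain rule for $g^{(j)}\circ\overline{v}^t_x$ from the one-dimensional embedding $H^2\hookrightarrow\mathscr{C}^1$ alone, and checking that \eqref{pr62} keeps the argument within $[-\theta,\theta]$ so that $(g_1)$ applies throughout $D_T$.
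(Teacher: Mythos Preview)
Your proposal is correct and follows essentially the same route as the paper: write out the explicit formulas for $\partial_1\xi$, $\partial_2\xi$, $\partial_{22}\xi$ (which match the paper's verbatim) and then read off the pointwise bounds by repeated application of part (i) of Lemma~\ref{le}, together with $g''(0)=0$ and $|g^{(3)}(\overline{v}^t_x)|\le|g^{(3)}(0)|+K\theta$. The paper's own proof is in fact much terser than yours---it records the three derivative formulas and then simply states that ``repeated use of part (i) of Lemma~\ref{le} triggers the result''---so your additional discussion of part (i) (the Sobolev algebra argument for $H^1(\Omega)$, the identification of the jump at $s=t$ forcing the domain $D_T$, and the role of \eqref{pr62}) supplies detail that the paper omits rather than a different method.
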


\begin{proof}
Observe that 

$$\dfrac{\partial \xi}{\partial t}=a'(s) g''\left(\overline{v}_x^t(s) \right)\left[v_x(t)-v_x(t-s) \right]   $$

$$\dfrac{\partial \xi}{\partial s}=a''(s) \left[ g'\left(\overline{v}_x^t(s) \right)-g'(0) \right]+a'(s)g''\left(\overline{v}_x^t(s) \right)v_x(t-s) $$ 

%\begin{align*}
%\dfrac{\partial^2 \xi}{\partial s \partial t}= & a''(s)  g''\left(\overline{v}_x^t(s) \right)\left[v_x(t)-v_x(t-s) \right]+a'(s)g^{(3)}\left(\overline{v}_x^t(s) \right)\left[v_x(t)-v_x(t-s) \right]v_x(t-s)\nonumber\\
%& + a'(s)g''\left(\overline{v}_x^t(s) \right)v_{xt}(t-s)
%\end{align*}

\begin{equation*}
\dfrac{\partial^2 \xi}{\partial t^2}= a'(s)g^{(3)}\left(\overline{v}_x^t(s) \right)\left[v_x(t)-v_x(t-s) \right]^2+a'(s)g''\left(\overline{v}_x^t(s) \right)\left[v_{xt}(t)-v_{xt}(t-s) \right]
\end{equation*}

Repeated use of part (i) of Lemma \ref{le} triggers the result.  

\end{proof}

For sake of clarity and - last but not least - reader's convenience, we restate Lemma's \ref{lies} content and then achieve its proof.

\begin{lemma}\label{lies1x}
Under the assumption that \eqref{pr6} is fulfilled,  one has:

\begin{align}\label{lies1x0}
\int_\Omega v_{tt}^2(x,t)\ud x -2g'(0) \displaystyle \mathop{\lim}_{h\to 0_+}\dfrac{1}{h^2} 
Q \left(\triangle_h v_{xt},t,a \right) \leq &  C \bigg\{ F + \sqrt{F}\sqrt{\mathcal{E}(t)} \nonumber\\
& + \left[\nu(t)+\nu^3(t) \right] \mathcal{E}(t) + \sqrt{V_0} \mathcal{E}(t) \bigg\}
\end{align}

\end{lemma}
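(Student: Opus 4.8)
The plan is to carry out the third energy estimate of the scheme of \cite{bh1}, replacing the (nonexistent) second time-derivative of \eqref{eel5} by the finite-difference operator $\triangle_h$; all the manipulations below are legitimate thanks to the regularity $v\in\ms{C}^0([0,T);H^2(\Omega))\cap\ms{C}^1([0,T);H^1(\Omega))\cap\ms{C}^2([0,T);L^2(\Omega))$ of the solution of $P_n$ (Proposition \ref{pr}). First I would apply $\triangle_h$ in $t$ (for $t<T$ and $0<h<T-t$) to \eqref{eel5}, after merging its two memory terms into $g'(0)\frac{\ud}{\ud t}(a*v_{xx})$. Substituting $\sigma\mapsto\sigma+h$ in the shifted convolution gives $\triangle_h\big(\frac{\ud}{\ud s}(a*v_{xx})\big)(x,s)=\frac{\ud}{\ud s}\big(a*\triangle_h v_{xx}\big)(x,s)+R_h(x,s)$, where $R_h(x,s):=\int_0^h a'(s+h-\sigma)\,v_{xx}(x,\sigma)\,\ud\sigma$ and $\frac1h R_h(\cdot,s)\to a'(s)\,v_0''$ in $L^2(Q_t)$ as $h\to0_+$. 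I would then test against $\triangle_h v_t$, integrate over $Q_t$, integrate by parts once in $x$ (using $\triangle_h v_t=0$ on $\partial\Omega$), and apply the time integration-by-parts identity $\frac{\ud}{\ud s}(a*\triangle_h v_x)(s)=a(s)\,\triangle_h v_x(\cdot,0)+(a*\triangle_h v_{xt})(s)$, the analogue of \eqref{eel7}. The decisive point is that the $a(0)v_{xx}$-term of \eqref{eel5} and the boundary part of the memory convolution are already merged into $\frac{\ud}{\ud s}(a*v_{xx})$, so after these integrations by parts the only quadratic-in-time quantity that survives is $-g'(0)Q(\triangle_h v_{xt},t,a)$; in particular no $\|v_{xt}(\cdot,t)\|_{L^2}^2$ term appears. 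Since $a_n$ is of positive type by $(a_4)$ (cf.\ the Parseval identity \eqref{ah5}) and $g'(0)<0$, the quantity $-2g'(0)\lim_{h\to0_+}\frac1{h^2}Q(\triangle_h v_{xt},t,a)$ is nonnegative and is kept on the left together with $\int_\Omega v_{tt}^2(x,t)\,\ud x$. Multiplying by $2$, dividing by $h^2$ and passing to $\liminf_{h\to0_+}$, one is left with an identity whose left-hand side is exactly $\int_\Omega v_{tt}^2(x,t)\,\ud x-2g'(0)\lim_{h\to0_+}\frac1{h^2}Q(\triangle_h v_{xt},t,a)$ and whose right-hand side is the sum of: $\int_\Omega v_{tt}^2(x,0)\,\ud x$; a boundary term $\Pi_1$ coming from $\triangle_h v_x(\cdot,0)$, equal in the limit to $2g'(0)\int_\Omega f_x(x,0)\big[a(t)v_{xt}(x,t)-(a'*v_{xt})(x,t)\big]\,\ud x$; $\Pi_2:=-2g'(0)\int_{Q_t}a'(s)\,v_0''(x)\,v_{tt}(x,s)\,\ud x\,\ud s$, the limit of the $R_h$-contribution; $2\int_{Q_t}f_{tt}v_{tt}\,\ud x\,\ud s$; and $2\lim_{h\to0_+}\frac1{h^2}\int_{Q_t}\triangle_h\mathcal G_t\,\triangle_h v_t\,\ud x\,\ud s$.

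The first four right-hand-side terms are elementary. Evaluating \eqref{eel5} at $t=0$ and using $\mathcal G_t(\cdot,0)=0$ (since $\overline{v}^0_x\equiv0$) gives $v_{tt}(\cdot,0)=-g'(0)a(0)v_0''+f_t(\cdot,0)$, and differentiating \eqref{eel9} in $x$ gives $v_{xt}(\cdot,0)=f_x(\cdot,0)$; hence $\int_\Omega v_{tt}^2(x,0)\,\ud x\le C(V_0+F)$, which is of the type on the right of \eqref{lies1x0} (recall $V_0\le\mathcal E(t)$ since $\mathcal E$ is nondecreasing and $\mathcal E(0_+)\ge\|v_0\|_{H^2}^2=V_0$, so $\|v_0''\|_{L^2}^2\le\sqrt{V_0}\sqrt{\mathcal E(t)}$). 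Next, $2\int_{Q_t}f_{tt}v_{tt}\le 2\|f_{tt}\|_{L^2(Q_t)}\|v_{tt}\|_{L^2(Q_t)}\le C\big(F+\sqrt F\sqrt{\mathcal E(t)}\big)$, where $f_{tt}\in L^2\big([0,+\infty);L^2(\Omega)\big)$ from $(f_2)$ is essential. Also $|\Pi_2|\le 2|g'(0)|\,\|v_0''\|_{L^2}\,\|a_n'\|_{L^1(\mathbb R_+)}\sup_{[0,t]}\|v_{tt}(\cdot,s)\|_{L^2}\le C\sqrt{V_0}\sqrt{\mathcal E(t)}$, using $\|a_n\|_{W^{1,1}(0,+\infty)}\le C$ from $(a_2)$; and $|\Pi_1|\le C\sqrt F\sqrt{\mathcal E(t)}$ using that $\|a_n\|_{L^\infty}=a_n(0)$ and $\|a_n\|_{W^{1,1}}$ are bounded uniformly in $n$ (both being $\le\|a_n\|_{W^{1,1}}$).

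The hard part, and the only genuinely delicate step, is $\lim_{h\to0_+}\frac1{h^2}\int_{Q_t}\triangle_h\mathcal G_t\,\triangle_h v_t\,\ud x\,\ud s$: formally this is $\int_{Q_t}\mathcal G_{tt}v_{tt}$, but $\mathcal G_{tt}$ involves $v_{xxt}$, a quantity not controlled by $\mathcal E$. I would keep the finite-difference form and treat $\mathcal G_t$ through its decomposition \eqref{pr5} together with Lemma \ref{ies}, which bounds $\xi(s,t,x)=a'(s)\big[g'(\overline{v}^t_x(x,s))-g'(0)\big]$ and its derivatives $\partial_1\xi,\partial_2\xi,\partial_{22}\xi$; these are integrable in $s$ against the weights $r_0(s)$, $|a'(s)|$ and $|a''(s)|\,r_0(s)$, all finite by $(a_3)$, Lemma \ref{adl1} and Remark \ref{rk1}. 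For each of the three pieces of \eqref{pr5} I would: (i) integrate by parts in $x$ to move a spatial derivative off any factor $v_{xx}$ that would otherwise receive a time-derivative; (ii) transfer the $\triangle_h$, hence in the limit a time-derivative, onto the kernel $a$ by integration by parts in the convolution variable --- this is licit precisely because $s\,a''\in L^1(0,1)$ and $s^2a'\in L^1$ (assumption $(a_3)$); (iii) estimate the resulting $\|\overline{v}^s_x(\cdot,\tau)\|_{L^\infty(\Omega)}$- and $\|\overline{v}^s_{xx}(\cdot,\tau)\|_{L^2(\Omega)}$-type factors by $\tau$ times maximal functions $\mathcal M(\|v\|_{H^2(\Omega)})$, invoke the Hardy--Littlewood maximal inequality exactly as in the proof of Lemma \ref{al4}, and control the convolutions by Lemma \ref{adl2}. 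Every boundary term generated in step (ii) carries a factor $f(\cdot,0)$, $f_x(\cdot,0)$, $v_0$, $v_0'$ or $v_0''$ --- i.e.\ a $\sqrt F$ or a $\sqrt{V_0}$ --- multiplied by a nonlinear smallness factor ($\nu(t)$ or $\nu^2(t)$) controlled by $\sqrt{\mathcal E(t)}$, hence fitting $\sqrt F\sqrt{\mathcal E(t)}+\sqrt{V_0}\,\mathcal E(t)$; all remaining terms carry at least one factor $g^{(j)}(\overline{v}^t_x)-g^{(j)}(0)=O\!\big(\nu(t)r_0(s)\big)$ or $g''(\overline{v}^t_x)=O\!\big(\nu(t)r_0(s)\big)$, producing $[\nu(t)+\nu^3(t)]\mathcal E(t)$. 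Summing, this term is $\le C\big\{F+\sqrt F\sqrt{\mathcal E(t)}+[\nu(t)+\nu^3(t)]\mathcal E(t)+\sqrt{V_0}\,\mathcal E(t)\big\}$, and combining with the preceding paragraph yields \eqref{lies1x0}. The main obstacle is thus step (ii): organizing the finite-difference bookkeeping so that no time-derivative is ever allowed to fall on $v_{xx}$ (which would produce the uncontrolled $v_{xxt}$) but is always shifted onto $a$, where $(a_3)$ makes it integrable.
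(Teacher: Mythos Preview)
Your overall strategy is correct and matches the paper's: apply $\triangle_h$ to the once-differentiated equation, multiply by $\triangle_h v_t$, integrate over $Q_t$, integrate by parts in $x$, isolate $-g'(0)Q(\triangle_h v_{xt},t,a)$, and arrange that no time-derivative ever lands on $v_{xx}$. The paper, however, organises the computation differently and more explicitly, and two of your stated mechanisms are off.

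First, the paper does not start from \eqref{eel5} and then treat $\triangle_h\mathcal G_t$ via \eqref{pr5}; it starts from \eqref{p4} (so only $g'(\bar v_x^s(\tau))[v_x(s)-v_x(s-\tau)]$ appears, never $v_{xx}$) and splits the resulting integral into four pieces $I_1,\dots,I_4$. Your linear part is exactly the paper's $I_3$, and your ``hard'' $\mathcal G_t$-part is precisely $I_1+I_2+I_4$. Second, the decisive integrations by parts are in the \emph{outer} time variable $s$, not in the convolution variable. For $I_2$ one writes $\triangle_h v_{xt}\cdot\triangle_h v_x=\tfrac12\partial_s|\triangle_h v_x|^2$ and moves $\partial_s$ onto $\xi$, producing $\partial_2\xi$; for $I_1$ one integrates by parts in $s$ and picks up $\partial_{22}\xi$; for $I_4$ one integrates first in $s$ and then once in the inner variable $\tau$ to obtain $\partial_1\xi+\partial_2\xi$. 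So the derivative is shifted onto $\xi$, not literally ``onto $a$'', and Lemma~\ref{ies} (bounds on $\xi,\partial_1\xi,\partial_2\xi,\partial_{22}\xi$ in terms of $|a'|,|a''|r_0,\nu(t)$) is what makes everything summable by $(a_3)$.

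Third, the maximal-function step is unnecessary here. After the manipulations above every surviving term is of one of the four shapes \eqref{bfj1}--\eqref{bfj4}, and these are bounded by $c\,\nu^k(t)\mathcal E(t)$ using only Lemma~\ref{adl2}; the maximal function enters only in Lemma~\ref{al4}, where one must control $\int_0^t\|\bar v_x^s(\cdot,\tau)\|_{L^\infty}^2\ud s$ uniformly in $\tau$. Finally, a small slip: your $\Pi_1$ has $(a'*v_{xt})(t)$, but what actually appears (compare $J_3$) is $\int_0^t a'(s)v_{xt}(s)\ud s$ together with an extra $-a(0)\int_\Omega f_x^2(\cdot,0)\ud x$; both are harmless and give the claimed $C(F+\sqrt{F}\sqrt{\mathcal E(t)})$. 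With these corrections your argument goes through and coincides with the paper's.
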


\begin{proof}
Derivate \eqref{p4} w.r.t. $t$ and apply  $\triangle_h$ on the resulting equation.  One gets:

\beq\label{lies2}
\triangle_h v_{tt}=\int_0^{+\infty} a'(s) \triangle_h  \left(g\left(\overline{v}_x^t(s) \right) \right)_{xt} \ud s + \triangle_h f_t
\eeq

Multiply the above by $\triangle_h v_t$, integrate on $\Omega\times [0,t]$ to obtain 

\begin{align}\label{lies3}
& \dfrac{1}{2}\int_\Omega  \left[\triangle_h v_t(x,t) \right]^2\ud x- \dfrac{1}{2} 
\int_\Omega\left[\triangle_h v_t(x,0) \right]^2\ud x \nonumber\\ 
& =-\int_0^t \int_\Omega \int_0^{+\infty} a'(\tau) \triangle_h g\left(\overline{v}_x^s(x,\tau) 
\right)_{s} \triangle_h v_{xt}(x,s)\ud\tau \ud x \ud s \nonumber\\
& + \int_0^t \int_\Omega \triangle_h f_t (x,s) \triangle_h v_t(x,s) \ud x \ud s
\end{align}

Observing that 

$$g\left(\overline{v}_x^s(x,\tau) \right)_{s}=g'\left(\overline{v}_x^s(x,\tau) \right)\left[v_x(x,s)-v_x(x,s-\tau) \right]  $$

leads to

\beq\label{lies4}
-\int_0^t \int_\Omega \int_0^{+\infty} a'(\tau) \triangle_h g\left(\overline{v}_x^s(x,\tau) \right)_{s} \triangle_h v_{xt}(x,s)\ud\tau \ud x \ud s =I_1+I_2+I_3+I_4
\eeq

where:

\beq\label{lies5}
I_1=-\int_0^t \int_\Omega \int_0^{+\infty}a'(\tau)\triangle_h v_{xt}(x,s)\triangle_h  g'\left(\overline{v}_x^s(x,\tau) \right)\left[v_x(s+h)-v_x(s+h-\tau) \right] \ud\tau \ud x \ud s
\eeq

\begin{align}\label{lies6}
I_2=-\int_0^t \int_\Omega \int_0^{+\infty}a'(\tau)\triangle_h v_{xt}(x,s)\left[g'\left(\overline{v}_x^s(x,\tau) \right)-g'(0) \right] \triangle_h  v_x(x,s)\ud\tau \ud x \ud s
\end{align}

\begin{align}\label{lies7}
I_3=g'(0)\int_0^t \int_\Omega \int_0^{+\infty}a'(\tau)\triangle_h v_{xt}(x,s)\left[\triangle_h v_x(s-\tau)-\triangle_h v_x(s) \right] \ud\tau \ud x \ud s
\end{align}

\begin{align}\label{lies8}
I_4=\int_0^t \int_\Omega \int_0^{+\infty}a'(\tau)\triangle_h v_{xt}(x,s)\left[g'\left(\overline{v}_x^s(x,\tau) \right)-g'(0)\right]\triangle_h v_x(s-\tau) \ud\tau \ud x \ud s
\end{align}

Integrating by parts w.r.t. $s$ leads to $I_1=I_{11}+I_{12}$, where:

\begin{align}\label{lies90}
I_{11} & =-\int_\Omega \int_0^{+\infty} a'(\tau)\triangle_h v_{x}(x,t) \triangle_h  g'\left(\overline{v}_x^t(x,\tau) \right)
\left[v_x(x,t+h)-v_x(x,t+h-\tau) \right] \ud\tau \ud x \nonumber\\ 
& + \int_0^t \int_\Omega \int_0^{+\infty} a'(\tau)\triangle_h v_{x}(x,s)  \triangle_h \left[ g''\left( \overline{v}_x^s(x,\tau)\right)\left( v_x(x,s)-v_x(x,s-\tau) \right) \right] \nonumber\\
& \left[v_x(x,s+h)-v_x(x,s+h-\tau) \right]\ud\tau \ud x \ud s \nonumber\\
& +  \int_0^t \int_\Omega \int_0^{+\infty}a'(\tau)\triangle_h v_x(x,s) \triangle_h g'\left( \overline{v}_x^s(x,\tau)\right) \left[ v_{xt}(x,s+h)-v_{xt}(x,s+h-\tau)\right]\ud\tau \ud x \ud s 
\end{align}

and

\begin{align}\label{lies91}
I_{12} & =\int_\Omega \int_0^{+\infty}a'(\tau) \triangle_h v_x(0) \triangle_h  g'\left( \overline{v}_x^0(x,\tau)\right) \left[ v_{x}(x,h)-v_{x}(x,h-\tau)\right]\ud\tau \ud x \nonumber\\
& -\int_0^t \int_\Omega a'(s+h) \triangle_h g'\left( \int_0^s  v_x(x,\lambda)\ud\lambda\right) v'_0(x)\triangle_h v_{x}(x,s)\ud x\ud s
\end{align}

Observe that

\begin{align}\label{lies92}
& \int_\Omega \int_0^{+\infty}a'(\tau) \triangle_h v_x(0) \triangle_h  g'\left( \overline{v}_x^0(x,\tau)\right) \left[ v_{x}(x,h)-v_{x}(x,h-\tau)\right]\ud\tau \ud x \nonumber\\
& =  \int_\Omega \left[v_x(h)-v_x(0) \right]\int_0^h a'(\tau) \left[ g' \left(\int_{h-\tau}^h v_x(\lambda)\ud\lambda \right)-g'(0) \right]\left[v_x(h)-v_x(h-\tau) \right] \ud\tau\nonumber\\ 
& - \int_\Omega a(h) \left[v_x(h)-v_x(0) \right]\left[ g' \left(\int_{0}^h v_x(\lambda)\ud\lambda \right)-g'(0) \right] v_x(h)\ud x
\end{align}

By integrating the first term by parts w.r.t. $\tau$ one gets

\begin{align}\label{lies93}
I_{12} & =  \int_\Omega \left[v_x(h)-v_x(0) \right]^2 a(h) \left[ g' \left(\int_{0}^h v_x(\lambda)\ud\lambda \right)-g'(0) \right] \ud x \nonumber\\
& - \int_\Omega \left[v_x(h)-v_x(0) \right] \int_0^h a(\tau)  g'' \left(\int_{h-\tau}^h v_x(\lambda)\ud\lambda \right)  v_x(h-\tau) \left[v_x(h)-v_x(h-\tau) \right]\ud\tau \ud x \nonumber\\
& - \int_\Omega \left[v_x(h)-v_x(0) \right] \int_0^h a(\tau) \left[ g' \left(\int_{h-\tau}^h v_x(\lambda)\ud\lambda \right)-g'(0) \right]v_{xt}(h-\tau)\ud\tau \ud x \nonumber\\
& - \int_\Omega a(h) \left[v_x(h)-v_x(0) \right] \left[ g' \left(\int_{0}^h v_x(\lambda)\ud\lambda \right)-g'(0) \right]v_x(h)\ud x \nonumber\\
& - \int_0^t  \int_\Omega a'(s+h) \triangle_h g' \left(\int_{0}^s v_x(\lambda)\ud\lambda \right) v_0'(x) \triangle_h v_x(s) \ud x \ud s
\end{align}

Next, dividing the above by $h^2$, passing to the limit for $h\to 0_+$ and using the fact that $v$ and its derivatives up to order 2 belong to $\mathscr{C}^2\left([0,T); L^2(\Omega) \right) $ leads to

\beq\label{lies10}
\dfrac{1}{h^2}I_1\displaystyle\xrightarrow[h\to 0_+]{\,} J_1+J_{01}
\eeq

where

\begin{align}\label{lies11}
J_1= & - \int_\Omega \int_0^{+\infty} \partial_2 \xi(\tau,t,x)v_{xt}(x,t) \left[v_x(x,t)-v_x(x,t-\tau) \right]\ud \tau \ud x \nonumber\\
& + \int_0^t \int_\Omega \int_0^{+\infty} \partial_{22}\xi(\tau,s,x) v_{xt}(x,s) \left[v_x(x,s)-v_x(x,s-\tau) \right]\ud \tau \ud x \ud s \nonumber\\
& + \int_0^t \int_\Omega \int_0^{+\infty} \partial_{2}\xi(\tau,s,x) v_{xt}(x,s) \left[v_{xt}(x,s)-v_{xt}(x,s-\tau) \right]\ud \tau \ud x \ud s
\end{align}

and 

\beq\label{j01}
J_{01}= -\int_0^t \int_\Omega a'(s) v_{xt}(x,s) g''\left( \overline{v}_x^s(x,s)\right) v_x(x,s)v'_0(x)\ud x \ud s
\eeq

The term $I_2$ can be re-written as

\begin{align}\label{lies12}
I_2= & -\dfrac{1}{2} \int_0^t \int_\Omega \int_0^{+\infty} \xi(\tau,s,x) \dfrac{\partial}{\partial s}\left| \triangle_h v_x \right|^2(x,s)\ud \tau \ud x \ud s \nonumber\\
= & -\dfrac{1}{2} \int_\Omega \int_0^{+\infty}\xi(\tau,t,x)\left| \triangle_h v_x(x,t) \right|^2\ud \tau \ud x \nonumber\\
& + \dfrac{1}{2} \int_0^t \int_\Omega \int_0^{+\infty} \partial_2 \xi(\tau,s,x)\left| \triangle_h v_x(x,s) \right|^2\ud \tau \ud x \ud s 
\end{align}

Dividing by $h^2$ and passing to the limit for $h\to 0_+$ one obtains

\beq\label{lies13}
\dfrac{1}{h^2}I_2\displaystyle\xrightarrow[h\to 0_+]{\,}J_2
\eeq

where

\begin{align}\label{lies14}
J_2= & -\dfrac{1}{2} \int_\Omega \int_0^{+\infty}\xi(\tau,t,x)\left| v_{xt}(x,t) \right|^2\ud \tau \ud x \nonumber\\
& + \dfrac{1}{2} \int_0^t \int_\Omega \int_0^{+\infty} \partial_2 \xi(\tau,s,x)\left|  v_{xt}(x,s) \right|^2\ud\tau \ud x \ud s 
\end{align}

Next, $I_3=I_{31}+I_{32}+I_{33}$, where 

\beq\label{lies15}
I_{31}=g'(0)\int_0^t \int_\Omega \int_0^s a'(\tau)\triangle_h v_{xt}(x,s) \triangle_h v_{x}(x,s-\tau) \ud\tau \ud x \ud s 
\eeq

\beq\label{lies16}
I_{32}=g'(0) \int_0^t \int_\Omega \int_s^{s+h} a'(\tau)\triangle_h v_{xt}(x,s) v_{x}(x,s+h-\tau) \ud\tau \ud x \ud s 
\eeq

\beq\label{lies17}
I_{33}=g'(0) a(0) \int_0^t \int_\Omega  \triangle_h v_{xt}(x,s) \triangle_h v_{x}(x,s) \ud x \ud s 
\eeq

Upon integration by parts w.r.t. $\tau$ leads to

\begin{align}\label{lies162}
I_{31} & =g'(0) \int_0^t \int_\Omega a(s) \triangle_h v_{xt}(x,s)\triangle_h v_{x}(x,0)\ud x \ud s  \nonumber\\
& - g'(0) a(0)  \int_0^t \int_\Omega \triangle_h v_{xt}(x,s)\triangle_h v_{x}(x,s)\ud x \ud s  \nonumber\\
& + g'(0) Q\left(\triangle_h v_{xt},a,t \right) 
\end{align}

The above implies, upon simplification and integration by parts w.r.t. $s$, that

\begin{align}\label{lies163}
I_3 & =g'(0) Q\left(\triangle_h v_{xt},a,t \right)+ g'(0)\int_\Omega a(t) \triangle_h v_{x}(t)\triangle_h v_{x}(0) \ud x \nonumber\\
& - g'(0) a(0) \int_\Omega \left(\triangle_h v_{x}(0) \right)^2 \ud x - g'(0) \int_0^t \int_\Omega a'(s) \triangle_h v_{x}(s) \triangle_h v_{x}(0) \ud x \ud s \nonumber\\
& - g'(0)\int_0^t \int_\Omega \int_s^{s+h} a'(\tau)\triangle_h v_{t}(x,s) v_{xx}(x,s+h-\tau) \ud\tau \ud x \ud s
\end{align}
 
Divide the above by $h^2$ and taking the lower limit for $h\to0_+$ gives

\beq\label{lies164}
\mathop{\liminf}_{h\to0_+}  \dfrac{1}{h^2}I_3=g'(0) \mathop{\liminf}_{h\to0_+} \dfrac{1}{h^2} Q\left(\triangle_h v_{xt},a,t \right)+ J_3  
\eeq

where

\begin{align}\label{lies165}
J_3 & = g'(0) \bigg\{ a(t)\int_\Omega v_{xt}(x,t) v_{xt}(x,0)\ud x - a(0)\int_\Omega v_{xt}^2(x,0)\ud x  \nonumber\\
& - \int_0^t \int_\Omega a'(s)v_{xt}(x,s)v_{xt}(x,0) \ud x \ud s  - 
\int_0^t \int_\Omega a'(s) v_{tt}(x,s)v_0''(x)\ud x \ud s \bigg\}  
\end{align}

Next we end up with the same result as in \eqref{lies164} with $\left( \displaystyle \mathop{\liminf}_{h\to0_+}\right) $  being replaced by $\left( \displaystyle \mathop{\limsup}_{h\to0_+}\right) $.

Now we can write $I_4$ in the form:

\begin{align}\label{lies166} 
I_4 & = \int_0^t \int_\Omega \left[\int_0^s \xi(\tau,s)\triangle_h v_x (s-\tau)\ud\tau + \int_s^{s+h}\xi(\tau,s)v_x(s+h-\tau)\ud\tau  \right] \triangle_h v_{xt}(x,s) \ud x \ud s
\end{align}

An integration by parts w.r.t. $s$ gives

\beq\label{lies167}
I_4=I_{41}+I_{42}+I_{43}+I_{44}
\eeq

where

\begin{align}\label{lies168} 
I_{41} & = - \int_0^t \int_\Omega \int_0^s \left[ \partial_2 \xi(\tau,s)\triangle_h v_x (x,s-\tau)  + \xi(\tau,s)\triangle_h v_{xt} (x,s-\tau) \right] \ud\tau \triangle_h v_{x}(x,s) \ud x \ud s
\end{align}

\begin{align}\label{lies169}
I_{42} & = - \int_0^t \int_\Omega \int_s^{s+h} \left[ \partial_2 \xi(\tau,s) v_x (x,s+h-\tau)  + \xi(\tau,s) v_{xt} (x,s+h-\tau) \right] \ud\tau \triangle_h v_{x}(x,s) \ud x \ud s
\end{align}

\begin{align}\label{lies6n0}
I_{43} & = - \int_0^t \int_\Omega \left[\xi(s+h,s)-\xi(s,s) \right] v_0'(x) \triangle_h v_{x}(x,s) \ud x \ud s
\end{align}

and

\begin{align}\label{lies6n1}
I_{44} & = \left[\int_\Omega \int_0^{s+h} \xi(\tau,s)\triangle_h v_{x}(x,s-\tau)\triangle_h v_{x}(x,s)\ud\tau \ud x  \right]_{s=0}^{s=t} 
\end{align}

We now deal with the second term in $I_{41}$; we have:

\begin{align}\label{lies6n2}
- \int_0^s \xi(\tau,s)\triangle_h v_{xt}(x,s-\tau)\ud\tau & = \xi(s,s)\left[v_x(h)-v_x(0) \right]- \int_0^s \partial_1 \xi(\tau,s)\triangle_h v_{x}(x,s-\tau)\ud\tau 
\end{align}

fact that allows to get

\begin{align}\label{lies6n3}
I_{41} & = - \int_0^t \int_\Omega \int_0^s \left[ \partial_1 \xi(\tau,s)+\partial_2 \xi(\tau,s)\right] \triangle_h v_x (x,s-\tau)\triangle_h v_x (x,s)\ud\tau \ud x \ud s\nonumber\\
& + \int_0^t \int_\Omega \xi(s,s)\left[v_x(h)-v_x(0) \right]\triangle_h v_{x}(x,s) \ud x \ud s
\end{align}

Now we obtain

\begin{align}\label{lies6n4}
\dfrac{1}{h^2}I_4\xrightarrow[h\to 0_+]{\,}J_4+J_{04}
\end{align}

where

\begin{align}\label{lies6n5}
J_4= & - \int_0^t \int_\Omega \int_0^s \left[ \partial_1 \xi(\tau,s)+\partial_2 \xi(\tau,s)\right] v_{xt} (x,s-\tau) v_{xt} (x,s)\ud\tau \ud x \ud s  \nonumber\\
& + \int_0^t \int_\Omega \xi(\tau,t)v_{xt} (x,t-\tau) v_{xt} (x,t)\ud x \ud\tau  
\end{align}

and
 
\begin{align}\label{lies6n6}
J_{04} & =-\int_0^t \int_\Omega   \left[ \partial_1 \xi(s,s)+\partial_2 \xi(s,s)\right] v_0'(x) v_{xt} (x,s) \ud x \ud s  
\end{align}

Now, from \eqref{lies3}, \eqref{lies4},\eqref{lies10},\eqref{lies11},\eqref{lies13},\eqref{lies14},\eqref{lies164},\eqref{lies165},\eqref{lies6n4},\eqref{lies6n5}, we deduce that

\begin{align}\label{lies6n7}
& \dfrac{1}{2}\int_\Omega v_{tt}^2(x,t)\ud x- \dfrac{1}{2}\int_\Omega v_{tt}^2(x,0)\ud x = g'(0) \displaystyle\mathop{\lim}_{h\to0_+}\dfrac{1}{h^2}Q\left(\triangle_h v_{xt},a,t \right) \nonumber\\
& + \int_0^t \int_\Omega v_{tt}(x,s)f_{tt}(x,s)\ud x\ud s + J_1+J_2+J_3+J_4+J_{01}+J_{04}
\end{align}

with $J_1$-$J_4$ being given by \eqref{lies11}, \eqref{lies14}, \eqref{lies165} and 
\eqref{lies6n5}, respectively.  One now needs to appropriately bound the terms 
$J_1$-$J_4$, $J_{01}$ and $J_{04}$.  It may be easily seen, using 
Lemma \ref{ies}, that all terms $J_1$, $J_2$ and 
$J_4$ can be bounded by one of the following type of expressions:

\beq\label{bfj1}
c \nu^k (t) \int_\Omega \left|w_1(x,t) \right| \left|w_2(x,t) \right|\ud x 
\eeq

or 

\beq\label{bfj2}
c \nu^k (t) \int_0^t \int_\Omega \left|w_1(x,s) \right| \left|w_2(x,s) \right|\ud x\ud s
\eeq

or

\beq\label{bfj3}
c \nu^k (t) \int_0^t \int_\Omega \f(\tau) \left|w_1(x,t-\tau) \right| \left|w_2(x,t) \right|\ud x\ud \tau
\eeq

or

\beq\label{bfj4}
c \nu^k (t) \int_0^t \int_\Omega \int_0^s \f(\tau) \left|w_1(x,s-\tau) \right| \left|w_2(x,s) \right|\ud \tau\ud x\ud s
\eeq

where $\f\geq0$ is a given function in $L^1(\mathbb{R}_+)$ depending on $a$, $c>0$ is a constant, $w_1, w_2$ stand for either $v$ or one of its derivatives up to second order, and $k\in \{1,2,3\}$.  This is a consequence of assumption $(a_3)$. 

Terms like  \eqref{bfj1} and \eqref{bfj2} can easily be bounded by $c \nu^k(t)\mathcal{E}(t)$.  
Using 
Lemma \ref{adl2}, terms like \eqref{bfj3} and \eqref{bfj4} can also be easily bounded by 
$c \nu^k(t)\mathcal{E}(t)$.  We then obtain that there exists a constant $c>0$ s.t. 

\beq\label{bfj5}
J_1+J_2+J_4\leq c\left[\nu(t)+\nu^3(t) \right]\mathcal{E}(t)
\eeq

The estimates for $J_3$, $J_{01}$ and  $J_{04}$ are simpler to obtain since they contain initial data.  Using \eqref{eel9} we get $v_{xt}(x,0)=f_x(x,0)$.  It easily follows that

\beq\label{bfj6}
\left|J_3 \right|\leq \left|g'(0) \right| \left( \left|a(t) \right|+ \|a'\|_{L^1(\mathbb{R}_+)}\
\right) \left[ \left (\sqrt{F} + \sqrt{V_0} \right) \sqrt{\mathcal{E}(t)}+a(0)F \right]
\eeq 

\beq\label{bfj6n}
\left|J_{01} \right|\leq K \theta \|a'\|_{L^1(\mathbb{R}_+)} \|v_0\|_{H^2(\Omega)}\mathcal{E}(t) 
\eeq

and

\beq\label{bfj7}
\left|J_{04} \right| \leq 3K 
\left( \theta \|a'\|_{L^1(\mathbb{R}_+)} + \|a'' r_0\|_{L^1(\mathbb{R}_+)} \right)
\|v_0\|_{H^2(\Omega)}\nu(t)\sqrt{\mathcal{E}(t)}  
\eeq 

From \eqref{lies6n7}, \eqref{bfj5},\eqref{bfj6} and \eqref{bfj7}, the result stated in the Lemma now follows.

\end{proof}

%%%%%%%%%%%%%%%%%%%%%%%%%%%%%%%%%%%%%%%%%%%%%%%%%%
%%%%%%%%%%%%%%%%%%%%%%%%%%%%%%%%%%%%%%%%%%%%%%%%%%
%\section{Conclusions.}
%%%%%%%%%%%%%%%%%%%%%%%%%%%%%%%%%%%%%%%%%%%%%%%%%%
%%%%%%%%%%%%%%%%%%%%%%%%%%%%%%%%%%%%%%%%%%%%%%%%%%

%In this paper we have proved the existence and uniqueness of solutions to the configurational probability diffusion PDE, as given in equation (7.243) of \cite{de1}, of the full Doi-Edwards model of polymer melt rheology. 

%\section{Acknowledgements} The authors are grateful to the Referees for useful comments on the originally submitted manuscript.  

\end{document}